\theoremstyle{plain}
\newtheorem{theorem}{Theorem}[section]
\newtheorem{lemma}[theorem]{Lemma}
\newtheorem{assumption}[theorem]{Assumption}
\newtheorem{proposition}[theorem]{Proposition}
\theoremstyle{definition}
\newtheorem{definition}[theorem]{Definition}
\newtheorem{set-up}[theorem]{Geometric set-up}
\newtheorem{remark}[theorem]{Remark}
\DeclareMathOperator{\cyl}{cyl}
\DeclareMathOperator{\Ind}{Ind}
\newcommand{\forget}[1]{}
\def  \nuint {\raise10pt\hbox{$\nu$}\kern-6pt\int}
\newcommand\Tr{\operatorname{Tr}}
\newcommand\C{\mathcal C}
\def \Sp {{\cal S}}
\newcommand\Di{D\kern-6pt/}
\newcommand\cDi{{\mathcal D}\kern-6pt/}
\newcommand\spi{S\kern-6pt/}
\newcommand \cspi{\Sp\kern-6pt/}
\newcommand\CC{\mathbb C}
\def \cal {\mathcal}
\def \C {{\cal C}}
\newcommand\RR{\mathbb R}
\newcommand\ZZ{\mathbb Z}
\newcommand\Ker{\operatorname{Ker}}
\definecolor{darkgreen}{cmyk}{1,0,1,.2}
\definecolor{m}{rgb}{1,0.1,1}
\long\def\red#1{\textcolor {red}{#1}} 
\long\def\blue#1{\textcolor {blue}{#1}}
\global\let\c@equation=\c@theorem}
\begin{document}
\pagestyle{myheadings}
\markboth{Paolo Piazza, Hessel Posthuma, Yanli Song and Xiang Tang }{Higher orbital integrals, rho numbers and  index theory}

\title{Delocalized eta invariants of the signature operator on G-proper manifolds}

\author{Paolo Piazza}
\address{Dipartimento di Matematica, Sapienza Universit\`{a} di Roma, I-00185 Roma, Italy}
\email{piazza@mat.uniroma1.it}
\author{Hessel Posthuma}
\address{Korteweg-de Vries Institute for Mathematics, University of Amsterdam, 1098 XG Amsterdam, the Netherlands}
\email{H.B.Posthuma@uva.nl}
\author{Yanli Song}
\address{Department of Mathematics and Statistics, Washington University, St. Louis, MO, 63130, U.S.A.} 
\email{yanlisong@wustl.edu}
\author{Xiang Tang}
\address{Department of Mathematics and Statistics, Washington University, St. Louis, MO, 63130, U.S.A.}
\email{xtang@math.wustl.edu}

\subjclass[2010]{Primary: 58J20. Secondary: 58B34, 58J22, 58J42, 19K56.}

\keywords{Lie groups, proper actions, orbital integrals, delocalized cyclic cocycles,
index classes, relative pairing, excision, Atiyah-Patodi-Singer higher index theory, delocalized eta invariants.}

\maketitle

\begin{abstract}
Let $G$ be a connected, linear real reductive group and let $X$ be a cocompact $G$-proper manifold without
boundary. 
We define delocalized eta invariants associated to a $L^2$-invertible perturbed  Dirac operator $D_X+A$ 
with $A$ a suitable smoothing perturbation. We also investigate the case in which $D_X$ is not invertible but $0$ is isolated in the $L^2$-spectrum of $D_X$.  We  prove index formulas relating these delocalized eta invariants to Atiyah-Patodi-Singer 
delocalized indices on  $G$-proper manifolds with boundary. In order to achieve this program 
we give a detailed account of both the large and small time behaviour of the heat-kernel of perturbed Dirac operators, as a map from the positive real line to the algebra of Lafforgue integral operators. 
We apply these results to the definition of rho-numbers associated to  $G$-homotopy equivalences 
between closed $G$-proper manifolds and to the study of their bordism properties. We also define delocalized
signatures of manifolds with boundary satisfying an invertibility assumption on the differential form Laplacian of the boundary in middle degree and prove an
Atiyah-Patodi-Singer formula for these delocalized signatures. 
		\end{abstract}

\tableofcontents

\section{Introduction}

Let $G$  be a connected, linear real reductive group. Let $K$ be a maximal compact subgroup of $G$.
This article complements our article {\it Higher orbital integrals, rho numbers and index theory}
\cite{PPST}. There, among other contributions, we gave a detailed study of the convergence problem for the delocalized 
eta invariant associated to an equivariant $L^2$-invertible Dirac operator  on a G-proper manifold without boundary
and also showed that under an $L^2$-invertibility assumption on the boundary operator, such an invariant
enters as a boundary correction term in a delocalized Atiyah-Patodi-Singer index theorem. This allowed us to
introduce in the spin case rho-numbers associated to $G$-equivariant metrics of positive scalar curvature 
and to study their bordism properties.  

More precisely,
we established in \cite{PPST} the following two results:

 \begin{theorem}\label{thm etadefine-intro}
Let $(X,\mathbf{h})$ be a cocompact $G$-proper manifold without boundary 
 and let $D_X$ be a $G$-equivariant Dirac-type operator (associated to a unitary
 Clifford action and a Clifford connection). We assume that $D_X$ is $L^2$-invertible. Let $g\in G$ be a semi-simple element and let $\tau_g$
 the associated orbital integral on Lafforgue's algebra\footnote{We actually worked with the Harish-Chandra Schwartz algebra in \cite{PPST} and the rapid decay algebra in \cite{PP1,PP2}. Our proofs in \cite{PPST, PP1,PP2} generalize directly to Lafforgue's algebra $\mathcal{L}_s(G)$ which is a dense (Banach) subalgebra of $C^*_r(G)$ stable under holomorphic functional calculus, c.f. \cite{Lafforgue}.} $\mathcal{L}_{s}(G)$ (see Definition \ref{defn:laffargue}). Let $\tau^X_g$ be the associated trace 
 on the algebra of Lafforgue integral operators $\mathcal{L}^\infty_{G,s} (X)$, a dense subalgebra of the Roe $C^*$-algebra $C^* (X)^G$.
 Then the  integral 
\begin{equation}\label{intro:eta-large t}
\eta_g (D):=\frac{1}{\sqrt{\pi}} \int_0^\infty \tau^{X}_g (D_X\exp (-tD_{X}^2) \frac{dt}{\sqrt{t}}
\end{equation}
converges. 
\end{theorem}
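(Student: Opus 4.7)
The plan is to split $\int_0^\infty = \int_0^1 + \int_1^\infty$ and handle the two halves by completely different techniques, since the obstructions to convergence are of different nature at the two endpoints: near $t=0$ one must control the short-time singularity of $D_X e^{-tD_X^2}$ against the weight $t^{-1/2}$, while near $t=\infty$ one must exploit the hypothesis of $L^2$-invertibility of $D_X$.

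For the large-time regime, I would first use $L^2$-invertibility to extract a spectral gap $\lambda>0$ with $\spec(D_X)\subset(-\infty,-\lambda]\cup[\lambda,\infty)$. Writing $f_t(x):=xe^{-tx^2}$, the operator $D_X e^{-tD_X^2}=f_t(D_X)$ is defined via Borel functional calculus, and for $t\ge 1$ one has the uniform estimate $|f_t(x)|\le C e^{-t\lambda^2/2}$ on $|x|\ge\lambda$. The key technical step is to show that $f_t(D_X)$ belongs to $\mathcal{L}^\infty_{G,s}(X)$ with seminorms decaying exponentially in $t$; this rests on a Fourier representation of $f_t$ combined with finite-propagation-speed estimates for the wave operator $\cos(sD_X)$, which yield the required off-diagonal decay of the Schwartz kernel with $G$-equivariant control in Lafforgue's sense. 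Continuity of the trace $\tau^X_g$ on $\mathcal{L}^\infty_{G,s}(X)$ then gives $|\tau^X_g(D_X e^{-tD_X^2})|\le C' e^{-t\lambda^2/2}$, so $\int_1^\infty$ converges absolutely.

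For the small-time regime, the orbital integral localizes the trace near the locus $\{(gx,x):x\in X^g\}$ on which the Schwartz kernel of $D_X e^{-tD_X^2}$ concentrates as $t\to 0$. Adapting the transversal Getzler--Bismut rescaling to this equivariant setting produces a local asymptotic expansion of $\tau^X_g(D_X e^{-tD_X^2})$ in powers of $\sqrt{t}$, whose coefficients are equivariant local index densities integrated over a neighborhood of $X^g$. A standard parity/Clifford-degree argument — $D_X$ is odd for the grading of the spinor bundle whereas the heat operator is even — forces the a priori most singular $t^{-1/2}$ coefficient to vanish, leaving $\tau^X_g(D_X e^{-tD_X^2})=O(t^{1/2})$ as $t\to 0^+$. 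Multiplication by $t^{-1/2}$ then yields a bounded integrand at $0$, so $\int_0^1$ converges.

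The principal obstacle is the large-time step: the passage from spectral $L^2$-invertibility to exponential decay in the Lafforgue-algebra seminorms is not automatic, since one must upgrade norm estimates in $C^*(X)^G$ to estimates in the smaller, holomorphically closed subalgebra $\mathcal{L}^\infty_{G,s}(X)$, and this requires a careful blending of functional calculus with uniform $G$-equivariant off-diagonal heat-kernel estimates on the proper cocompact manifold $X$. By comparison, the small-time analysis is essentially a local computation on $X^g$ and follows well-established transverse-Getzler techniques, with the novelty being only the bookkeeping for the orbital-integral trace in the Lafforgue framework.
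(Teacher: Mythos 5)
Your overall decomposition — split $\int_0^\infty=\int_0^1+\int_1^\infty$, exploit the $L^2$-spectral gap for large time and local index cancellation on $X^g$ for small time — is exactly the structure used in \cite{PPST} and in the perturbed version proved in the present paper (Proposition \ref{prop:eta-conv-perturbed}). However, your proposed techniques for both halves diverge from the paper's, and the large-time step as you have sketched it has a genuine gap.

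For the large-time regime the paper does not use finite propagation speed; it uses a contour/resolvent representation. Writing $f(-tD^2)=\frac{(k-1)!}{2\pi i}t^{1-k}\int_\gamma g(-t\lambda)(D^2-\lambda)^{-k}\,d\lambda$ with $\gamma$ contained in $\{\operatorname{Re}z\ge b>0\}$ ($b$ below the bottom of $\operatorname{spec}_{L^2}(D^2)$), the exponential decay in $t$ comes from $g(-t\lambda)$ along $\gamma$, while the $\lambda$-integral stays bounded in the Fr\'echet seminorms because of the decomposition $(D^2-\lambda)^{-k}=B(\lambda)+C(\lambda)$ into a uniformly $G$-compactly supported parametrix of order $-2k$ plus a residual term rapidly decaying in $\lambda$. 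Your proposed route — Fourier representation of $f_t$ plus finite propagation of $\cos(sD)$ — would need to overcome a quantitative obstruction you do not address: the Lafforgue seminorm $\nu_s$ carries the exponentially growing weight $(1+\|g\|)^s\Xi^{-1}(g)$, whereas finite propagation gives only Gaussian off-diagonal decay $e^{-cd^2/t}$ for the heat-type kernels, which flattens as $t$ grows. A uniform pointwise bound $|f_t(D)(x,z)|\lesssim e^{-(t-1)\lambda^2}$ from the spectral gap gives no off-diagonal decay at all, so the integral against $\Xi^{-1}$ diverges; combining the two requires the spectral gap $\lambda$ to be large compared with the growth exponent of $\Xi^{-1}$, a hypothesis not available to you. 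This is precisely why the paper works with the pseudodifferential structure of the resolvent rather than wave-equation estimates. If you want to make the FPS route work, you must explain how to beat the Harish-Chandra weight for \emph{all} spectral gaps, not just large ones.

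For the small-time regime, your attribution of the $O(\sqrt{t})$ bound to "a standard parity/Clifford-degree argument" is misleading. The trace $\tau^X_g$ is an ungraded (orbital-integral) trace, and there is no $\mathbb{Z}_2$-grading making $De^{-tD^2}$ traceless by parity. What is actually used, and what is cited in the paper as \cite[Theorem 5.1]{PPST} and \cite[Proposition 5.29]{PPST}, is the Bismut--Freed/Zhang local cancellation for Dirac operators associated to a Clifford connection (see \cite{Zhangwp}): after splitting the orbital integral over a neighbourhood $N(r)$ of $X^g$ and its complement, the complement contributes exponential decay, and the contribution over $N(r)$ admits an asymptotic expansion in which the \emph{a priori} singular negative powers of $\sqrt{t}$ vanish because $D$ comes from a Clifford connection. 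This is the hypothesis "unitary Clifford action and Clifford connection" in the statement — it is essential, as Section \ref{sect:further} emphasizes by contrast when $D_\Theta=D+\Theta$ is \emph{not} of this form and the naive integral diverges at $t=0$. The cancellation is a delicate local index computation (Mehler/Lichnerowicz/rescaling), not a one-line Clifford-degree count, so your claim that the small-time half is "essentially a local computation... with the novelty being only the bookkeeping" understates what must be proved.
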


 \begin{theorem}\label{intro:0-delocalized-aps}
  Let $(Y_0, \mathbf{h}_0)$ be a $G$-proper manifold
 with boundary with compact quotient and let $(Y,\mathbf{h})$ be the associated manifold with cylindrical ends.
 We assume that the boundary operator  $D_{\partial}$ is $L^2$-invertible.  Let $D_0$  be a generalized Dirac operator on $(Y_0, \mathbf{h}_0)$ and denote by $D$ the associated 
 operator on $(Y,\mathbf{h})$.
 Then:
 \begin{itemize}
\item[1)] there exists a dense holomorphically closed
 subalgebra $\mathcal{L}^\infty_{G,s} (Y,E)$ of the Roe algebra $C^* (Y_0\subset Y,E)^G$ and a smooth index class 
 $\Ind_\infty (D)\in K_0(\mathcal{L}^\infty_{G,s} (Y, E)) \cong
 K_0(C^*(Y_0\subset Y,E)^G)$;\\
 \item[2)] for the pairing of the index class $\Ind(D_\infty)$ with the $0$-cocycle  $\tau^Y_g\in HC^0 (\mathcal{L}^\infty_{G,s} (Y,E))$
 defined by the orbital integral $\tau_g$  
 the following delocalized 0-degree index formula holds:
 \begin{equation}
\label{main-0-degree}
 \langle \tau^Y_g,\Ind_\infty (D) \rangle= \int_{(Y_0)^g} c^g {\rm AS}_g (D_0) - \frac{1}{2} \eta_g (D_{\partial })\,,
 \end{equation}
where $c^g$ is a cut-off function for the fixed-point manifold
$(Y_0)^g$ and where the Atiyah-Segal delocalized integrand $ {\rm AS}_g(D_0)$ appears.
\end{itemize}
 
 \end{theorem}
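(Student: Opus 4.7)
The plan is to adapt the classical heat-kernel proof of the APS index theorem to the framework of orbital-integral traces on the Lafforgue algebra.

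\emph{Part (1).} Since $D_\partial$ is $L^2$-invertible, the operator $D$ on $(Y,\mathbf{h})$ is invertible at infinity on the cylindrical end, so $D^2$ has a spectral gap at $0$ modulo compact operators. I would define $\mathcal{L}^\infty_{G,s}(Y, E)$ as the algebra of $G$-invariant integral operators whose Schwartz kernels satisfy the natural $Y$-analogue of the Lafforgue-type decay used on closed cocompact $G$-proper manifolds in \cite{PPST}. The index class is then obtained from the Connes--Moscovici recipe: choose a smoothing parametrix $Q$ for $D$ with remainders $S_0 = 1 - QD$ and $S_1 = 1 - DQ$, and form the idempotent
\[
V(Q) = \begin{pmatrix} S_0^2 & S_0(1+S_0)Q \\ S_1 D & 1 - S_1^2 \end{pmatrix}.
\]
The key point is that $Q$ and the $S_i$ can be chosen with entries in $\mathcal{L}^\infty_{G,s}(Y, E)$; this rests on the small- and large-time heat kernel estimates mentioned in the abstract, applied to the resolvent and to the heat-kernel representations of $D^{-2}$ and of the remainders $S_i$. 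Holomorphic closedness of $\mathcal{L}^\infty_{G,s}(Y, E)$ inside $C^*(Y_0 \subset Y, E)^G$ then yields the K-theory isomorphism, and one sets $\Ind_\infty(D) = [V(Q)] - [e_{22}]$.

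\emph{Part (2).} I would use the McKean--Singer-type identity
\[
\langle \tau_g^Y, \Ind_\infty(D) \rangle = \tau_g^Y\bigl(e^{-tD^-D^+}\bigr) - \tau_g^Y\bigl(e^{-tD^+D^-}\bigr), \qquad t > 0,
\]
obtained directly from the idempotent $V(Q)$ together with cyclicity of $\tau_g^Y$. The left-hand side being $t$-independent, the plan is to compute the right-hand side by letting $t \to 0^+$, splitting the analysis between the compact core of $Y$ and the cylindrical end. On the core, equivariant Getzler rescaling localizes onto the fixed-point manifold $(Y_0)^g$ (weighted by the $G$-cutoff $c^g$) and produces $\int_{(Y_0)^g} c^g\, \mathrm{AS}_g(D_0)$. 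On the cylinder $(-\infty, 0] \times \partial Y_0$, the operator has the product form $\sigma(\partial_u + D_\partial)$; a direct Duhamel computation using the explicit product heat kernel reduces the cylindrical contribution to $-\tfrac12 \eta_g(D_\partial)$ as defined in Theorem \ref{thm etadefine-intro}. Adding the two gives the formula.

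\emph{Main obstacle.} The delicate point is the interchange of the $t \to 0^+$ limit with the orbital trace $\tau_g^Y$ on the non-compact manifold $Y$: the heat kernel $e^{-tD^2}$ does not admit a single local small-time expansion over all of $Y$, and the contributions from the compact core and from the cylindrical end must be handled separately while ensuring convergence to the finite quantities on the right-hand side. Controlling this rests on the decoupling of $e^{-tD^2}$ between the core and the cylinder in Lafforgue's algebra, together with the finiteness statement of Theorem \ref{thm etadefine-intro} applied to $D_\partial$ --- precisely the refined heat-kernel analysis that the body of the paper sets up.
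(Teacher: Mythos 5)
Your Part (1) is in line with the paper: the index class is built via a Connes--Moscovici/Connes--Skandalis idempotent with residual remainders, and holomorphic closedness of $\mathcal{L}^\infty_{G,s}(Y)$ in the Roe algebra gives the K-theory identification. This is essentially what Section \ref{sect:preliminaries} records, following \cite{PP2}.

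The gap is in Part (2). Your McKean--Singer identity
\[
\langle \tau_g^Y, \Ind_\infty(D)\rangle = \tau_g^Y\big(e^{-tD^-D^+}\big) - \tau_g^Y\big(e^{-tD^+D^-}\big)
\]
does not make sense as written, and the version it needs to be replaced by is \emph{not} $t$-independent. On the $b$-manifold $Y$ the heat operators $e^{-tD^\mp D^\pm}$ live in the large algebra ${}^b\mathcal{L}^\infty_{G,s}(Y)$, not in the residual algebra $\mathcal{L}^\infty_{G,s}(Y)$ where $\tau^Y_g$ is a trace; one must use the $b$-regularized functional $\tau^{Y,r}_g$ instead. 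But $\tau^{Y,r}_g$ is not a trace --- it is only one component of the relative 0-cocycle $(\tau^{Y,r}_g,\sigma^{\partial Y}_g)$ --- so the supertrace $\tau^{Y,r}_g(e^{-tD^-D^+}) - \tau^{Y,r}_g(e^{-tD^+D^-})$ is genuinely $t$-dependent, its $t$-derivative being precisely the $b$-trace commutator defect. Consequently your plan to recover $-\tfrac12\eta_g(D_\partial)$ by taking $t\to 0^+$ and doing a Duhamel computation on the cylinder cannot work: in Melrose's framework the cylinder contribution to the regularized supertrace tends to zero as $t\to 0^+$, and the eta term arises from integrating the defect over all of $t$, not from the small-time limit.

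The paper's route, recalled in Section \ref{section:0cocycle} and carried out for the perturbed operator in the proof of Theorem \ref{theorem-aps-ind-perturbed}, handles this correctly by passing to the relative index class $\Ind_\infty(D,D_\partial)\in K_0({}^b\mathcal{L}^\infty_{G,s}(Y),{}^b\mathcal{L}^\infty_{G,s,\RR}(\cyl(\partial Y)))$, applying excision, and pairing with the relative cocycle $(\tau^{Y,r}_g,\sigma^{\partial Y}_g)$. This produces the identity $\langle\tau^Y_g,\Ind_\infty(D)\rangle = \tau^{Y,r}_g(e^{-s^2D^-D^+}) - \tau^{Y,r}_g(e^{-s^2D^+D^-}) + \int_s^\infty\sigma^{\partial Y}_g([\dot r_t,r_t],r_t)\,dt$; the first two terms localize to $\int_{(Y_0)^g}c^g\,{\rm AS}_g(D_0)$ as $s\downarrow 0$, while the 1-cocycle integral --- not a cylindrical heat-kernel integral --- reduces (via a Loya/Melrose--Piazza-style computation with the indicial family) to $-\tfrac12\eta_g(D_\partial)$. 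You should replace your McKean--Singer step with this relative-pairing identity; once that is in place, the splitting into Getzler rescaling on the core and the boundary reduction is sound, but the boundary reduction happens inside the cyclic 1-cocycle $\sigma^{\partial Y}_g$, not inside a $t\to 0$ analysis of the heat kernel on the cylinder.
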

 
 \noindent
This second statement was previously  established by different methods in \cite{HWW2} 
but under  the rather restrictive hypothesis that  $G/Z_G (g)$ is compact. 

\medskip
In this paper our main interest lies in the  signature operator $D^{{\rm sign}}$. Since the signature operator is rarely $L^2$-invertible, Theorem \ref{thm etadefine-intro} and \ref{intro:0-delocalized-aps} do not apply directly to $D^{{\rm sign}}$. So we investigate in this article under which conditions we can lift the invertibility assumption on $D_X$ in Theorem \ref{thm etadefine-intro} and on the boundary operator $D_\partial$ in Theorem \ref{intro:0-delocalized-aps}.
As in previous work on the subject, our position is that 
such an assumption can indeed be lifted, provided we are able to find a suitable perturbation 
of $D_X$ or $D_\partial$ that makes it $L^2$-invertible. The main issue is of course
the following:  what kind of perturbation can one allow? Indeed, we focus on a perturbation for which we are able to
prove a delocalized Atiyah-Patodi-Singer index theorem. 

We  consider the following two situations.
\begin{enumerate}
\item There exists an element $C$  in $\mathcal{L}^c_G (X,E)$, the $G$-equivariant Lafforgue integral  operators of $G$-compact support,  making the operator $D_X+C$ $L^2$-invertible; under this assumption  we show that 
\begin{equation}\label{statement-convergence}
\text{the delocalized eta invariant}\;\; \eta_g (D_X+C)\;\; \text{is well defined.}
\end{equation}
\item The operator $D_X$ is such that $0$ is isolated in the spectrum.
Under this assumption we prove that the delocalized eta invariant 
$\eta_g (D)$ is well defined; moreover, as a preparation for the Atiyah-Patodi-Singer we shall establish later
in the article,
we consider 
$\eta_g (\Theta)$, $\Theta$ a positive real number, defined  in terms of a global 0th-order 
perturbation $D_\Theta$ of the operator; 
we study the limit of  $\eta_g (\Theta)$ as $\Theta\downarrow 0$ and prove that it is equal to
the sum of $\eta_g (D)$ and the term $\tau^X_g (\Pi_{\Ker D})$, which we prove to be finite;\\
\end{enumerate}

Crucial for all these results  is a very detailed study of the large  and short time behaviour of the heat kernel of a perturbed Dirac operator {\em as a map from $\RR^+$ to a suitable Lafforgue algebra of integral operators}. As we shall see, results that are well known in the compact case often require
alternative and not so obvious arguments.

Once all these results are established on a $G$-proper manifold without boundary, we investigate   how  these two invariants fit into suitable 
Atiyah-Patodi-Singer index theorems for Dirac operator with {\it non-invertible}
boundary operator. We now state these results. 
%

First, we have the following APS index theorem, connected with the delocalized eta invariant introduced above, $\eta_g (D_X+C)$.

\begin{theorem}\label{intro:smoothing-perturbation}
Let $C_\partial $ be  a Lafforgue integral  operator of $G$-compact support
such that $D_\partial + C_\partial$ is $L^2$-invertible.
Then there exists 
a well defined 
index class $\Ind (D,C_\partial)\in K_0 (\mathcal{L}^\infty_{G, s} (Y))=K_0 (C^* (Y_0\subset Y, E)^G$. Moreover,
for the delocalized index $\langle [\tau^Y_g], \Ind (D,C_\partial) \rangle$ the following formula holds
\begin{equation*}
\langle [\tau^Y_g], \Ind (D,C_\partial) \rangle= \int_{(Y_0)^g} c^g {\rm AS}_g (D_0) - \frac{1}{2} \eta_g (D_{\partial}+C_\partial).
\end{equation*}
\end{theorem}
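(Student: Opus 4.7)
The plan is to reduce the theorem to the invertible-boundary case of Theorem~\ref{intro:0-delocalized-aps} by extending the smoothing perturbation $C_\partial$ from $\partial Y$ to a global perturbation $C$ on $Y$ whose indicial operator on the cylindrical end is precisely $D_\partial+C_\partial$. Concretely, on the end $\partial Y\times[0,\infty)$ one takes $C$ to act as $C_\partial\otimes\mathrm{Id}$ (smoothing in the $\partial Y$ direction and translation-invariant in $t$), cut off by a function $\chi(t)$ that vanishes near $Y_0$ and equals $1$ for $t$ large. On the cylindrical end the Dirac operator has the product form $D=\sigma(\partial_t+D_\partial)$, so far from $Y_0$ the perturbed operator becomes $\sigma(\partial_t+D_\partial+C_\partial)$, which is $L^2$-invertible by Fourier transform in the $t$-variable (its spectrum is $\{i\xi+\mu:\xi\in\mathbb{R},\,\mu\in\spec(D_\partial+C_\partial)\}$ and therefore avoids $0$).

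First I would construct the index class $\Ind(D,C_\partial)\in K_0(\mathcal{L}^\infty_{G,s}(Y))$ by adapting the construction used for the invertible case in Theorem~\ref{intro:0-delocalized-aps}. Because the indicial family of $D+C$ is invertible, $D+C$ is invertible modulo Lafforgue smoothing operators of $G$-compact support on $Y$, and either a parametrix construction or a graph-projection construction yields a well-defined class in $K_0(\mathcal{L}^\infty_{G,s}(Y))$ which, under the isomorphism with $K_0(C^*(Y_0\subset Y,E)^G)$, represents $\Ind(D,C_\partial)$. Independence of the extension data (the cut-off $\chi$ and the smooth interpolation into $Y_0$) follows from a homotopy argument, since along such a path the indicial family stays invertible and gives a norm-continuous family of Fredholm operators in the Lafforgue algebra.

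To prove the index formula I would pair $\Ind(D,C_\partial)$ with $\tau^Y_g$ via heat-kernel methods. Using the large/short-time asymptotic analysis carried out earlier in the paper, for every $t>0$ one has
\begin{equation*}
\langle [\tau^Y_g],\Ind(D,C_\partial)\rangle=\tau^Y_g\bigl(\Str(e^{-t(D+C)^2})\bigr),
\end{equation*}
with large-time stability following from $L^2$-invertibility of $D+C$ and exponential decay of $e^{-t(D+C)^2}$ in $\mathcal{L}^\infty_{G,s}(Y)$. Decomposing
\begin{equation*}
\tau^Y_g\bigl(\Str(e^{-t(D+C)^2})\bigr)=\lim_{s\downarrow 0}\tau^Y_g\bigl(\Str(e^{-s(D+C)^2})\bigr)+\int_0^t\frac{d}{ds}\tau^Y_g\bigl(\Str(e^{-s(D+C)^2})\bigr)\,ds,
\end{equation*}
the short-time limit is computed by Getzler rescaling at the fixed-point set $(Y_0)^g$; since $C$ is a smoothing perturbation and so does not alter the principal symbol, one recovers $\int_{(Y_0)^g}c^g\,{\rm AS}_g(D_0)$, as in the proof of Theorem~\ref{intro:0-delocalized-aps}. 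A Duhamel transgression on the cylindrical end identifies the $t\to\infty$ limit of the integrated derivative with $-\tfrac12\eta_g(D_\partial+C_\partial)$, its convergence being precisely statement~(\ref{statement-convergence}) applied to the boundary operator $D_\partial+C_\partial$.

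The main obstacle is the transgression step. On a non-cocompact cylindrical end one must show that $\tfrac{d}{ds}\Str\bigl(e^{-s(D+C)^2}\bigr)$ is, up to terms vanishing in the Lafforgue trace $\tau^Y_g$, a total boundary contribution, and that this contribution matches the integrand defining $\eta_g(D_\partial+C_\partial)$. Justifying this in the present setting requires the detailed short-time and large-time heat-kernel estimates --- as maps into $\mathcal{L}^\infty_{G,s}(Y)$ and $\mathcal{L}^\infty_{G,s}(\partial Y)$ --- that underlie Theorem~\ref{thm etadefine-intro} and the well-definedness statement~(\ref{statement-convergence}), now applied to the perturbed operator $D+C$ and its invertible indicial family $D_\partial+C_\partial$.
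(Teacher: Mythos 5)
Your high-level strategy (extend $C_\partial$ along the cylinder, then run a McKean--Singer/transgression argument for the $b$-supertrace of the perturbed heat operator) is the classical Melrose-style route; the paper instead works with the Connes--Moscovici projector and a \emph{relative} index class paired against the relative cyclic $0$-cocycle $(\tau^{Y,r}_g,\sigma^{\partial Y}_g)$, rescales to extract the local term, and then reduces to Lemma~\ref{lemma-key}. Either framework could in principle work, but your write-up has several concrete problems.

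First, the identity ``$\langle\tau^Y_g,\Ind(D,C_\partial)\rangle=\tau^Y_g(\Str(e^{-t(D+C)^2}))$ for every $t>0$'' cannot hold as stated. The supertrace of $e^{-t(D+C)^2}$ lies in the $b$-algebra ${}^b\mathcal{L}^\infty_{G,s}(Y)$, not in the residual algebra $\mathcal{L}^\infty_{G,s}(Y)$ on which $\tau^Y_g$ is defined; you must use the Melrose-regularized functional $\tau^{Y,r}_g$. Moreover $\tau^{Y,r}_g$ is \emph{not} a trace, so $\tau^{Y,r}_g(\Str e^{-s(D+C)^2})$ is not constant in $s$ and therefore is not equal to the index pairing for finite $s$. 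What is true is that the pairing equals the $s\to\infty$ limit (here one does need the perturbed large-time estimates of Proposition~\ref{prop:large-yes-boundary-perturbed}), and the difference from the $s\downarrow 0$ limit is exactly the defect of $\tau^{Y,r}_g$ from being a trace. The paper packages this defect as the cyclic $1$-cochain $\sigma^{\partial Y}_g$ evaluated on the commutator $[\dot r_t,r_t]$; you need some equivalent bookkeeping, and your formula as written skips it.

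Second, your extension $C=\chi(t)\,(C_\partial\otimes\mathrm{Id})$ is \emph{not} a $b$-pseudodifferential operator, and Remark~\ref{comparison-wahl} makes exactly this point: the cut-off extension gives the ``cylindrical'' index class, which is fine for well-definedness, but the $b$-calculus machinery (indicial families, Mellin transform, the $b$-integral, the $1$-cochain $\sigma^{\partial Y}_g$) requires a genuine $b$-operator. The paper produces one by taking $\widehat{\rho}_\epsilon(\lambda)C_\partial$ as indicial family, applying the inverse Mellin transform, and lifting with the section $\varphi$; as a result the boundary operator becomes the $\lambda$-dependent $B(\lambda)=D_\partial+R(\lambda)$ rather than the constant $D_\partial+C_\partial$ you would get from a naive product extension. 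That $\lambda$-dependence is not a nuisance one can ignore: it is precisely what keeps $C$ inside the $b$-calculus, and it is why the eventual identification of the boundary integral with $-\tfrac12\eta_g(D_\partial+C_\partial)$ is nontrivial.

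Third, ``a Duhamel transgression on the cylindrical end identifies the $t\to\infty$ limit of the integrated derivative with $-\tfrac12\eta_g(D_\partial+C_\partial)$'' is where essentially all of the work lies, and it is not a routine Duhamel step here. Because $B(\lambda)$ depends on $\lambda$, the indicial family of the Connes--Moscovici projector involves noncommuting operators, and the paper must first establish the trace identity~\eqref{eq-fe} by carefully organizing the Duhamel expansion so that all $\partial_\lambda B(\lambda)$ factors can be cycled to one side, and then run a Loya-style deformation $\zeta(r)$ interpolating between the $\lambda$-dependent and $\lambda$-independent boundary operators, with convergence of all the $r$-parametrized integrals controlled by Lemma~\ref{lemma-lail}. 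Your proposal leaves this whole homotopy/comparison argument unaddressed, and with the naive cut-off extension (which does not live in the $b$-calculus) it is not even clear how to set it up.

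In short: the local term via rescaling is fine and essentially matches the paper, and your invertibility observation for the cylindrical model operator is correct; but the well-definedness step needs the Mellin-transform lift rather than a plain cut-off, and the boundary transgression step needs both the regularized $b$-trace (with its non-tracial defect correctly isolated) and the nontrivial indicial-family/homotopy analysis of Lemma~\ref{lemma-key}.
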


Using the  results about $\eta_g (D_X+C)$ 
and the above index theorem 
we are able to give an exhaustive treatment of the signature operator $D^{{\rm sign}}$ on $G$-proper manifolds. As explained above, this is our main motivation for establishing the above general results.
In particular, we introduce  rho-invariants $\rho_g ({\bf f})$ associated to  a
$G$-homotopy equivalence  ${\bf f}: X\to Y$  between two cocompact $G$-proper manifolds without boundary
and we study their
bordism properties;
we 
employ crucially the Hilsum-Skandalis perturbation and \eqref{statement-convergence}
for the definition of  $\rho_g ({\bf f})$ and Theorem \ref{intro:smoothing-perturbation}
for its bordism properties. Next, under an invertibility assumption on the differential form Laplacian in middle degree,
we introduce a delocalized eta invariant $\eta_g (D^{{\rm sign}}_{\mathbf{h}})$ on a cocompact $G$-proper manifold
without boundary $(X,\mathbf{h})$; we use crucially {\it symmetric} perturbations as in \cite{LP-AGAG}
 \cite{Wahl-product}.
We then pass to a cocompact $G$-proper manifold with boundary $(Y_0,\mathbf{h}_0)$;
we show that under the same assumption on the boundary Laplacian we have a well-defined delocalized
signature $\sigma_g (Y_0,\partial Y_0, \mathbf{h}_0)$ and we give a Atiyah-Patodi-Singer signature formula for it, in terms
of a local integral on fixed point sets and the delocalized eta invariant $\eta_g (D^{{\rm sign}}_{\partial \mathbf{h}_0})$.

As far as delocalized Atiyah-Patodi-Singer index theory is concerned we also analyze the case in which 0 is isolated in the spectrum  of the boundary operator $D_\partial$.  Following the notation  in Theorem  \ref{intro:0-delocalized-aps},  we prove the following result:  

\begin{theorem}\label{intro:gap-theo}
Assume that the boundary 
operator on $\partial Y$ satisfies  
$$
\exists\; \delta >0\;\;\text{such that}\;\; {\rm spec}_{L^2} (D_\partial)\cap (-\delta,\delta)=\{0\}.
$$
Then for $\Theta\in (0,\delta)$ there exists a well defined smooth index class $\Ind_\infty (D_\Theta)\in K_0 (\mathcal{L}^\infty_{G, s}(Y,E))=K_0 (C^* (Y_0\subset Y, E)^G)$, independent of  $\Theta\in (0,\delta)$.
Moreover, for its pairing with the 0-degree cyclic cocycle $\tau_g^Y$ the following formula holds:
\[
\langle \Ind_\infty (D_\Theta),\tau^{Y}_{g}\rangle =
\int_{(Y_0)^g} c^g {\rm AS}_g (D_0)-\frac{1}{2}(\eta_g (D_\partial) + \tau^{\partial Y}_g (\Pi_{\ker D_\partial})).
\]
\end{theorem}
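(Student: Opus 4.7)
The strategy is to reduce Theorem~\ref{intro:gap-theo} to the invertible-boundary APS theorem (Theorem~\ref{intro:smoothing-perturbation}) by means of the global $0$-th order perturbation $D_\Theta$, and then pass to the limit $\Theta\downarrow 0$. Fix $\Theta\in(0,\delta)$. Because $0$ is isolated in ${\rm spec}_{L^2}(D_\partial)$ with gap $\delta$, the perturbation is constructed from the spectral projection $\Pi_{\ker D_\partial}$ so that the induced boundary operator $D_{\partial,\Theta}$ has spectrum contained in $(-\infty,-\Theta]\cup[\Theta,\infty)$ and is therefore $L^2$-invertible. Since $\Pi_{\ker D_\partial}$ is itself a smoothing $G$-compactly supported operator on $\partial Y$ (its finite trace $\tau_g^{\partial Y}(\Pi_{\ker D_\partial})$ is already established in case~(2)), it lies in $\mathcal{L}^c_G(\partial Y_0,E)$, so $C_\partial := D_{\partial,\Theta}-D_\partial$ is of the type admitted by Theorem~\ref{intro:smoothing-perturbation}. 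Applying that theorem then produces a well-defined smooth index class, which we identify with $\Ind_\infty(D_\Theta)\in K_0(\mathcal{L}^\infty_{G,s}(Y,E))$, and yields the pairing formula
\[
\langle \Ind_\infty(D_\Theta),\tau^Y_g\rangle
=\int_{(Y_0)^g} c^g\,{\rm AS}_g(D_0)-\tfrac{1}{2}\,\eta_g(D_{\partial,\Theta}).
\]

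Next, the independence of $\Ind_\infty(D_\Theta)$ on $\Theta\in(0,\delta)$ is a standard homotopy argument: the family $\{D_{\partial,\Theta}\}_{\Theta\in(0,\delta)}$ is a norm-continuous path of $L^2$-invertible perturbations, so the resulting family of smooth APS index classes is constant in the discrete group $K_0(\mathcal{L}^\infty_{G,s}(Y,E))$. To obtain the formula, pass to the limit $\Theta\downarrow 0$ in the pairing formula above. The local integrand on $(Y_0)^g$ is independent of $\Theta$; for the boundary correction, we apply the convergence result proved for case~(2) on the closed $G$-proper manifold $\partial Y$, namely
\[
\lim_{\Theta\downarrow 0}\eta_g(D_{\partial,\Theta})=\eta_g(D_\partial)+\tau^{\partial Y}_g(\Pi_{\ker D_\partial}),
\]
both terms being finite. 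Combining these ingredients yields the claimed identity.

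The main obstacle is not analytic but organizational: one must verify that the global $0$-th order perturbation $D_\Theta$ used on $Y$ is genuinely of product form on the cylindrical end and that, under this identification, $D_{\partial,\Theta}-D_\partial\in\mathcal{L}^c_G(\partial Y_0,E)$, so that Theorem~\ref{intro:smoothing-perturbation} really applies and the two a priori different notions of smooth index class (the one produced by that theorem versus the one intrinsically attached to $D_\Theta$ in the manifold-with-cylindrical-end formalism) actually coincide. Once this compatibility is in place the rest of the argument consists of invoking the convergence of $\eta_g(D_{\partial,\Theta})$ and the finiteness of $\tau_g^{\partial Y}(\Pi_{\ker D_\partial})$, both of which have been established earlier in the paper.
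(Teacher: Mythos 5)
Your proposal reroutes the gap case through Theorem~\ref{intro:smoothing-perturbation} by taking the boundary perturbation to be $\Theta\,\Pi_{\ker D_\partial}$. The decisive step is the claim that $\Pi_{\ker D_\partial}$ lies in $\mathcal{L}^c_G(\partial Y,E)$, and this is where the argument breaks down. The paper establishes only that $\Pi_{\ker D_\partial}\in\mathcal{L}^\infty_{G,s}(\partial Y)$ (Lemma~\ref{lemma:eta-gap}), and the hypothesis of Theorem~\ref{theorem-aps-ind-perturbed} is strictly $C_\partial\in\mathcal{L}^c_G(\partial Y)$: the lift $C_Y(\epsilon)$ to a $b$-perturbation, the analysis of $D_{\rm split}C$, $CD_{\rm split}$, $RC$, $CR\in\mathcal{L}^c_G$ feeding the Volterra series, and the indicial-family constructions via inverse Mellin transform all use $G$-compact support. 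Your justification --- that the trace $\tau_g^{\partial Y}(\Pi_{\ker D_\partial})$ is already known to be finite --- is a non sequitur: every element of $\mathcal{L}^\infty_{G,s}$ has a finite $\tau_g$-trace; finiteness of the trace does not imply $G$-compact support. On a noncompact $G$-proper $\partial Y$ the spectral projection onto the (typically infinite-dimensional) $L^2$-kernel of $D_\partial$ has a smooth integral kernel that decays but has no reason to vanish outside a $G$-compact set, so the membership $\Pi_{\ker D_\partial}\in\mathcal{L}^c_G$ should not be taken for granted.

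The remaining invocations also do not quite match what the paper proves. The convergence result you quote, $\lim_{\Theta\downarrow 0}\eta_g(D_{\partial,\Theta})=\eta_g(D_\partial)+\tau^{\partial Y}_g(\Pi_{\ker D_\partial})$, is Proposition~\ref{prop:eta-conv-gap-bis} and concerns the \emph{scalar} shift $D_\partial+\Theta$; it is proved via the regularized eta invariant $\eta_g(\Theta)={\rm LIM}_{s\downarrow 0}(\dots)$ and a Duhamel/Melrose argument that is specific to that perturbation. Your perturbation $D_\partial+\Theta\Pi_{\ker D_\partial}$ would --- if it were in $\mathcal{L}^c_G$ --- actually give an unregularized eta invariant and the \emph{exact} identity $\eta_g(D_\partial+\Theta\Pi)=\eta_g(D_\partial)+\tau_g^{\partial Y}(\Pi_{\ker D_\partial})$ for every $\Theta>0$ (using $D_\partial\Pi=\Pi D_\partial=0$ and $\frac{1}{\sqrt\pi}\int_0^\infty\Theta e^{-t\Theta^2}\tfrac{dt}{\sqrt t}=1$), so the $\Theta\downarrow 0$ limit step would be unnecessary; in any case you cannot directly cite Proposition~\ref{prop:eta-conv-gap-bis} for it. The paper's actual route is structurally different: it conjugates, setting $D^\pm_\Theta=x^{\mp\Theta}D^\pm x^{\pm\Theta}$, which induces the boundary operator $D_\partial+\Theta$ --- a zeroth-order \emph{non-smoothing} shift --- and precisely because this perturbation is not in $\mathcal{L}^c_G$ the authors must bypass Theorem~\ref{theorem-aps-ind-perturbed} altogether, developing a regularized eta invariant (Definition~\ref{def:modified}) and a correspondingly regularized APS formula, then taking $\Theta\downarrow 0$ on both the local and the boundary contributions. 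Without first establishing $\Pi_{\ker D_\partial}\in\mathcal{L}^c_G$ (or extending the smoothing-perturbation APS machinery to the larger class $\mathcal{L}^\infty_{G,s}$), your shortcut does not go through.
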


\bigskip
\noindent
 {\bf The paper is organized as follows.} In Section \ref{sect:preliminaries} we recall the results in \cite{PPST}.
In Section \ref{section:heat}  we state results about the short and large time behaviour of the heat kernel of
a perturbed operator, both on closed manifolds and on manifolds with boundary. These results, necessarily of a technical
nature, play a crucial role in our analysis; the proofs, that are somewhat long,  are given in a separate section, Section \ref{sect:proofs-heat}.
 In Section \ref{sect:modified} we introduce and study the delocalized eta invariant $\eta_g (D_X+C)$;
 Section \ref{sect:aps-perturbed} is devoted to the proof of Theorem \ref{intro:smoothing-perturbation}, the delocalized APS index theorem corresponding to $\eta_g (D_{\partial}+C)$.
In Section \ref{sect:numeric} we apply these results to the signature operator:
we introduce rho-numbers associated to $G$-homotopy equivalences
and study their bordism-invariance properties. We also discuss delocalized signatures of 
$G$-proper manifolds with boundary such that the differential forms Laplacian on the boundary is invertible
in middle degree. Finally, 
 Section \ref{sect:further} is devoted to the gap case.

\bigskip
\noindent
{\bf Acknowledgements.} We would like to thank Pierre Albin, Jean-Michel Bismut,
Peter Hochs, Yuri Kordyukov, Xiaonan Ma, Shu Shen, and 
Weiping Zhang for  inspiring discussions. Piazza was partially supported by 
the PRIN {\it Moduli spaces and Lie Theory} of MIUR ({\it Ministero Istruzione Universit\`a Ricerca})
and by {\it Istituto Nazionale di Alta Matematica Francesco Severi}; Song was partially supported by NSF Grant DMS-1800667 and DMS-1952557; Tang was partially supported by NSF grants DMS-1952551, DMS-2350181, and Simons Foundation grant MPS-TSM-00007714.

\section{Index classes and cyclic cocycles}\label{sect:preliminaries}

In this section we shall briefly recall the main characters of our paper  \cite{PPST}.
 We shall be brief and refer the reader
to that paper for details.

\subsection{Definition of some algebras}
 We consider first a co-compact $G$-proper manifold {\it without} boundary $X$.
We know \cite{Abels} that 
there exists a compact submanifold
$S\subset X$ on which the $G$-action restricts to an action of a maximal compact subgroup $K\subset G$, so that 
the natural map
\[
G\times_K S\to X,\quad [g,x]\mapsto g\cdot x,
\]
is a diffeomorphism. Let $\mathcal{L}_G^c (X)$ be the algebra of $G$-equivariant smoothing operators
of $G$-compact support;  this decomposition of $X$ induces an isomorphism
\begin{equation}
\label{algebra-slice}
\mathcal{L}_G^c (X)\cong \left(C^\infty_c(G)\hat{\otimes}\Psi^{-\infty}(S)\right)^{K\times K}
\end{equation}
and more generally
\begin{equation}
\label{algebra-slice-bis}
\mathcal{L}_G^c (X,E)\cong \left(C^\infty_c(G)\hat{\otimes}\Psi^{-\infty}(S,E|_S)\right)^{K\times K},
\end{equation}
in the presence of a $G$-equivariant vector bundle $E$.
(We shall often expunge the vector bundle $E$ from the notation.)

\begin{definition}\label{defn:laffargue}
For $s \in [0, \infty)$, define the Lafforgue algebra $\mathcal{L}_s(G)$ to be the completion of $C_c(G)$ with respect to the norm $\nu_s$ defined as follows
\[
\nu_s (f) \colon = \sum_{g\in G} \left\{(1 + \|g\|)^s \cdot \Xi^{-1}(g)\cdot |f(g)|\right\},
\]	
where $\Xi(g)$ denotes the Harish-Chandras' spherical function. Moreover, we define
\[
	\mathcal{L}^\infty_{G,s} (X):= \left(\mathcal{L}_s(G)\hat{\otimes}\Psi^{-\infty}(S)\right)^{K\times K}.
	\]    
\end{definition}

The completion of the Lafforgue algebra $\mathcal{L}^\infty_{G,s}(X)$ under the operator norm $\|\cdot \|_{L^2(X, E)}$ is the Roe $C^*$-algebra $C^*(X, E)^G$. Let $e$ be a $K\times K$-invariant rank 1 projection in $\Psi^{-\infty}(S)$ associated with the $K$-invariant constant function 1 on $S$. Then $\mathcal{L}^\infty_{G,s}(X)(Id\otimes e):=\{T(Id\otimes e)|\ T\in \mathcal{L}^{\infty}_{G,s}(X)\}$  has a natural $\mathcal{L}^\infty_{G,s}(X)$-$\mathcal{L}_s(G)$ bimodule structure and carries a $C^*_r(G)$-valued inner product $\langle\ ,\ \rangle_{C^*_r(G)}$ defined by the trace $\Tr$ on $\Psi^{-\infty}(S)$. The completion of $\mathcal{L}^\infty_{G,s}(X)(Id\otimes e)$ under the inner product $\langle\ ,\ \rangle_{C^*_r(G)}$ defines an element $[e]$ in $KK(C^*(X, E), C^*_r(G))$. Product with the element $[e]$ gives a group homomorphism 
\[
[e]: K_\bullet\big( C^*(X, E)^G\big)\longrightarrow K_\bullet \big(C^*_r(G)\big).
\]
A Lie group version of \cite[Chapter 3, 4.$\gamma$, Lemma 8]{Connes94} and \cite{Roe02} relates the index $\Ind(D)\in K_0(\mathcal{L}^\infty_{G.s}(X, E))\cong K_0(C^*(X, E)^G)$ to the $C^*_r(G)$-index $\Ind_{C^*_r(G)}(D)\in K_0(C^*_r(G))$, 
\[
[e](\Ind(D))=\Ind_{C^*_r(G)}(D).
\]
We remark that in general $C^*(X, E)^G$ is not Morita equivalent to the (reduced) group $C^*$-algebra $C^*_r(G)$. And the map $[e]: K_\bullet\big( C^*(X, E)^G\big)\longrightarrow K_\bullet \big(C^*_r(G)\big)$ is not an isomorphism of abelian groups. For example, let $K$ be a compact group acting trivially on a compact manifold $M$. Let $E$ be a $K$-equivariant vector bundle $E$ on $M$. Since the $K$-action is trivial and $M$ is compact, $C^*(M, E)^G$ is the algebra $\mathcal{K}(M, E)$ of compact operators on $L^2(M, E)$, which is not Morita equivalent to $C^*(K)$. And the map $[e]$ on $K$-theory groups fails to be an isomorphism. On the other hand, let $H$ be a $G$-equivariant admissible module on $X$; under this assumption Guo, Hochs, and Mathai  \cite{GuoHochsMathai} showed that $C^*(X, E)^G$ is Morita equivalent to $C^*_r(G)$. \\

Throughout the article we will fix a sufficiently large $s\gg0$ and work with $\mathcal{L}_{s}(G)$ and $\mathcal{L}_{G, s}^\infty(X)$.  
 \subsection{Orbital integrals}
 \label{oiclosed}
 Let us fix a semisimple element $g\in G$. Under this  assumption we know that $G/Z_g$, with $Z_g$ denoting
the centralizer of
$g$, has a $G$-invariant Haar measure $d(hZ_g)$. Recall that the orbital integral of a function $f\in C_c (G)$
is, by definition,
\[
\tau_g (f):=\int_{G/Z_g} f(hgh^{-1}) d(h Z_g),
\]
which {\it extends}
to a continuous trace homomorphism 
\begin{equation}\label{orbital-trace}
\tau_g: \mathcal{L}_{s}(G)\to\CC\,.
\end{equation} 
Following  \cite{Hochs-Wang-KT}, we define 
\begin{equation}\label{tr-g-closed}
\tau_g^X (T):= \int_{G/Z_g}\int_X c(hgh^{-1}x) {\rm tr} (hgh^{-1}\kappa (hg^{-1}h^{-1}x,x))dx \,d(hZ)
\end{equation}
with $c$ a cut-off function for the action of $G$ on $X$ and ${\rm tr}$ denoting the vector-bundle fiberwise
trace. Then $\tau_g^{X}$ defines a continuous 
 trace
 \begin{equation}\label{delocalized-trace-on-closed}
\tau_g^{X}: \mathcal{L}^\infty_{G,s} (X,E)\to\CC\,.
\end{equation}

\subsection{$G$-proper manifolds with boundary}
Let now $Y_0$ be a  manifold with boundary; we assume again that  $G$ 
acts properly
and cocompactly on $Y_0$. We denote by $X$ the boundary of $Y_0$. 
We endow $Y_0$ with a $G$-invariant metric $\mathbf{h}_0$ which is of product type near the boundary. We let
$(Y_0,\mathbf{h}_0)$ be the resulting Riemannian  manifold with boundary. 
We denote by $c_0$ a cut-off function for the action of $G$ on $Y_0$; since the action is cocompact, this can be chosen to be a compactly supported smooth function.
We consider the associated manifold with cylindrical ends
$\widehat{Y}:= Y_0\cup_{\partial Y_0} \left(   (-\infty,0] \times \partial Y_0 \right)$,
endowed with the extended metric $\widehat{\mathbf{h}}$ and the extended $G$-action.
We denote by  $(Y,\mathbf{h})$ the $b$-manifold associated to  $(\widehat{Y},\widehat{\mathbf{h}})$. We shall often treat 
$(\widehat{Y},\widehat{\mathbf{h}})$ and $(Y,\mathbf{h})$ as the same object. We denote by $c$ the  obvious extension of the cut-off function
$c_0$ for the action of $G$ on $Y_0$ (constant along the cylindrical end); this is a cut-off function of the extended action of $G$ on $Y$. Let us  fix a slice $Z_0$ for the $G$ action on $Y_0$; thus
$$Y_0\cong G\times_K Z_0$$
with $K$ a maximal compact subgroup of $G$ and $Z_0$ a smooth compact manifold  with boundary, denoted by $S = \partial Z_0$, endowed with a $K$-action. 
Clearly  for the boundary $X = \partial Y_0$ we have
\[
X  \cong G \times_K S. 
\] 
Notice that $Y\cong G\times_K Z$ with $Z$ the $b$-manifold associated to $Z_0$. As in \cite{PPST}
 we assume that 
\begin{enumerate}
	\item the symmetric space $G/K$,
	\item the $G$-manifold $Y_0$,
	\item the $K$-slice $Z_0$
\end{enumerate}
are all even dimensional. 

\subsection{Manifolds with boundary: the Lafforgue algebras and index classes}
Let $(Y_0,\mathbf{h}_0)$  be  a cocompact $G$ proper manifold with boundary $X$ as in the previous subsection
and let $(Y,\mathbf{h})$   be the associated $b$-manifold. We denote by $Z_0$ a slice for $Y_0$
and by $Z$ the associated $b$-manifold.

Using the $b$-calculus with $\epsilon$-bounds on $Z$ and the algebra ${\mathcal{L}}^\infty_{G,s}(Y)$ one can employ Abels' theorem \cite{Abels}
in order
to define algebras of  operators on $Y$
fitting  into a short exact sequence of operators:
\begin{equation}\label{b-short-c}0\to  \mathcal{L}^c_G (Y)\to 
{}^b \mathcal{L}^c_G (Y)\xrightarrow{I} {}^b \mathcal{L}^c_{G,\RR} ({\rm cyl}(\partial Y))\to 0, 
\end{equation}
with $\cyl (\partial Y)=\RR\times \partial Y$. 
Here $I$ denotes the indicial operator. See \cite{PP2} and references therein. Notice that we have and we shall omit the $\epsilon$ from the notation of the calculus with bounds.

Employing {Lafforgue's algebra  $\mathcal{L}^\infty_{s}(G)$ instead of $C^\infty_c (G)$ 
we can define as in \cite{PP2} the three algebras  $\mathcal{L}^\infty_{G,s} (Y)$, ${}^b \mathcal{L}^\infty_{G,s} (Y)$ and ${}^b \mathcal{L}^\infty_{G,s, \RR} ({\rm cyl}(\partial Y))$
fitting into the short exact sequence
\begin{equation}\label{b-short-HC}0\to  \mathcal{L}^\infty_{G,s} (Y)\to {}^b \mathcal{L}^\infty_{G,s} (Y)\xrightarrow{I} {}^b \mathcal{L}^\infty_{G,s,\RR} ({\rm cyl}(\partial Y))\to 0.\end{equation}
This extends \eqref{b-short-c}. Recall at this point, see \cite{PP2}, that $ \mathcal{L}^\infty_{G,s} (Y)$, also called the residual algebra, is dense and holomorphically closed in the Roe algebra $C^*(Y_0\subset Y)^G$.




\medskip
Let now $D$ be an equivariant Dirac operator, of product type near the boundary.
We shall make for the time being the following assumption:
\begin{equation}\label{assumption}
 \text{the boundary operator}\;\;
D_{\partial Y}\;\; \text{is}\;\;L^2\text{-invertible}.
\end{equation}

Let $D$ be as above and let $Q^\sigma$ be a symbolic $b$-parametrix for $D$.
Following Melrose, see \cite[Section 5.13]{Melrose-Book}, we can use a {\it true} $b$-parametrix $Q^b=Q^\sigma-Q^\prime$ for $D$ in order to define a Connes-Skandalis index class
\begin{equation}\label{CS-class-bis}
\Ind_\infty (D):=[P^b_{Q}] - [e_1]\in K_0(\mathcal{L}^\infty_{G,s} (Y))\equiv 
K_0 (C^*(Y_0\subset Y)^G)
\;\;\;\text{with}\;\;\;e_1:=\left( \begin{array}{cc} 0 & 0 \\ 0&1
\end{array} \right).
\end{equation}
This class can also be obtained \`a la Connes-Moscovici, by improving the parametrix $ Q
:= \frac{I-\exp(-\frac{1}{2} D^- D^+)}{D^- D^+} D^+$ to a true $b$-parametrix. This defines a projector
$V^b (D)$, a $2\times 2$ matrix with entries
in  (the unitalization of) $\mathcal{L}^\infty_{G,s} (Y)$ and one can prove that  $[V^b (D)]  - [e_1]= [P^b_{Q}] - [e_1]$ in $K_0(\mathcal{L}^\infty_{G,s} (Y))$.
We can also consider  the plain Connes-Moscovici projector $V(D)$,  
\[
V(D):=\left( \begin{array}{cc} e^{-D^- D^+} & e^{-\frac{1}{2}D^- D^+}
\left( \frac{I- e^{-D^- D^+}}{D^- D^+} \right) D^-\\
e^{-\frac{1}{2}D^+ D^-}D^+& I- e^{-D^+ D^-}
\end{array} \right).
\]
It is a $2\times 2$ matrix with entries in ${}^b\mathcal{L}^\infty_{G,s} (Y_0)$; similarly, 
the Connes-Moscovici projector $V(D^{\cyl})$ is a $2\times 2$ matrix with entries in ${}^b\mathcal{L}^\infty_{G,s,\RR} ({\rm cyl}(\partial Y))$. 
The first two authors proved in \cite{PP2}, following \cite{moriyoshi-piazza}, that these two projectors define a smooth relative index class  $\Ind_\infty (D,D_\partial )\in 
K_0({}^b \mathcal{L}^\infty_{G,s} (Y_0),{}^b \mathcal{L}^\infty_{G,s,\RR} ({\rm cyl}(\partial Y)))$; indeed
$\Ind_\infty (D,D_\partial ): =[V(D), e_1,p_t]$ 
with $p_t $, $t\in [1,+\infty]$, defined as follows

\begin{equation}\label{pre-wassermann-triple}
p_t:= \begin{cases} V(t (D_{\cyl}))
\;\;\quad\text{if}
\;\;\;t\in [1,+\infty)\\
e_1 \;\;\;\;\;\;\;\;\;\;\;\;\;\,\text{ if }
\;\;t=\infty
 \end{cases}
\end{equation}
 Moreover, the class $\Ind_\infty (D)$ is sent to the class $\Ind_\infty (D,D_\partial )$ through the excision isomorphism
$\alpha_{{\rm exc}}$. Notice that in \eqref{pre-wassermann-triple} we use implicitly the large time behaviour of the heat kernel
and, more generally, of the Connes-Moscovici projector, as a map
with values in $2\times 2$ matrices with entries in ${}^b\mathcal{L}^\infty_{G,s,\RR} ({\rm cyl}(\partial Y))$; this was discussed in detail in \cite{PPST}. (This point was treated very briefly in \cite{PP2}.) 

%
%

\subsection{Cyclic cocycles associated to orbital integrals on manifolds with boundary}
\label{section:0cocycle}
As explained in \cite{PPST}, the orbital integral $\tau^g$ defines a trace-homomorphism 
on the residual algebra \begin{equation}\label{delocalized-trace-on-M}
\tau_g^Y: \mathcal{L}^\infty_{G,s} (Y)\to\CC\,
\end{equation} 
exactly as in the closed case
\begin{equation}\label{tr-g-b}
\tau_g^Y (T):= \int_{G/Z_g}\int_Y c(hgh^{-1}x) {\rm tr} (hgh^{-1}\kappa (hg^{-1}h^{-1}x,x))dx \,d(hZ)
\end{equation}
with $dx$ denoting now the $b$-volume form associated to the $b$-metric $\mathbf{h}$. The trace $\tau^Y_g$ defines a cyclic 0-cocycle
 on the algebra $ \mathcal{L}^\infty_{G,s} (Y)$. Using the pairing between $K$-theory and cyclic cohomology, denoted by 
 $\langle\,,\,\rangle$, we have in our case
 \begin{equation}\label{pairing}
 \langle\cdot\,,\cdot\,\rangle: HC^0 (\mathcal{L}^\infty_{G,s} (Y))\times K_0(\mathcal{L}^\infty_{G,s} (Y)) \to \CC
 \end{equation}
 and thus a homomorphism
 \begin{equation}\label{pairing-hom}
 \langle\tau^Y_g,\cdot\rangle: K_0(\mathcal{L}^\infty_{G,s} (Y))\to \CC.
 \end{equation}

 Following the relative cyclic cohomology approach in \cite{moriyoshi-piazza,GMPi,PP2} we defined in \cite{PPST}
 a relative cyclic 0-cocycle $(\tau^{Y,r}_g,\sigma_g)$.
(The $r$ on the right hand side stands for {\it regularized}, given that the $b$ is the differential in cyclic cohomology.)

The 1-cocycle $\sigma^{\partial Y}_g$ can in fact be defined on any  proper $G$-manifold $X$ without boundary;
it is the following  1-cochain on ${}^b \mathcal{L}^c_{G,\RR} ({\rm cyl}(X))$
\begin{equation}\label{1-eta}
\sigma^X_g (A_0,A_1)=\frac{i}{2\pi}\int_\RR\tau_g^X ( \partial_\lambda I(A_0,\lambda)\circ I(A_1, \lambda)) d\lambda\,,
\end{equation}
where the indicial family of $A\in {}^b \mathcal{L}^c_{G,\RR} ({\rm cyl}(X))$, denoted $I(A,\lambda)$, appears. 
Then $\sigma^X_g (\,,\,)$ is well-defined  and  a cyclic 1-cocycle.

Going back now to a $b$-manifold $Y$ with  boundary   $\partial Y$, we have also defined in \cite{PPST}
 the functional  $\tau^{Y,r}_g (\cdot): {}^b \mathcal{L}^c_{G} (Y)\to \CC$:
$$\tau^{Y,r}_g (T):= 
 \int_{G/Z_g}\int^b_Y c(hgh^{-1}y) {\rm tr} (hgh^{-1}\kappa (hg^{-1}h^{-1}y,y))dy \,d(hZ)$$
where Melrose's $b$-integral has been used, $dy$ denotes the $b$-density associated to the $b$-metric $\mathbf{h}$ and where we recall
that  the cut-off function $c_0$ on $Y_0$ is extended constantly along
the cylinder to define $c$. We proved in \cite{PPST} that
\begin{itemize}
\item 
the pair 
$(\tau^{Y,r}_g,\sigma^{\partial Y}_g)$ defines  a relative 0-cocycle for ${}^b \mathcal{L}^c_G (Y)\xrightarrow{I} {}^b \mathcal{L}^c_{G,\RR} ({\rm cyl}(\partial Y))$;
\item 
the pair 
$(\tau^{Y,r}_g,\sigma^{\partial Y}_g)$ extends continuously to  a relative 0-cocycle for 
${}^b \mathcal{L}^\infty_{G,s} (Y)\xrightarrow{I} {}^b \mathcal{L}^\infty_{G,s,\RR} ({\rm cyl}(\partial Y));$
\item the following formula holds:
\begin{equation}\label{basic-formula-3} 
 \langle\tau^Y_g,\Ind_\infty (D)\rangle=\langle (\tau^{Y,r}_g,\sigma^{\partial Y}_g), \Ind_\infty (D,D_\partial )\rangle\,.
 \end{equation}
 \end{itemize}

  \section{The heat kernel}\label{section:heat}


In this section we wish to extend our study  of the large and short time behaviour 
of the heat kernel in \cite{PPST} to the case of  {\it perturbed} Dirac operators, first
 on cocompact $G$-proper manifolds without boundary
and  then on cocompact $G$-proper manifolds with boundary. As we shall see, properties of the heat kernel
of a perturbed Dirac operator that are analogue of well-known results on compact manifolds require a rather delicate analysis in order to be established in the $G$-proper case. In this section we 
state the main results; the (long) proofs will be given in a separate section at the end of the paper, see Section \ref{sect:proofs-heat}.

In this and the next section, we will freely use properties of the Bochner integrals with values in Fr\'echet algebras, c.f.  \cite{Thomas, Wiciak}.

\subsection{The heat kernel of a perturbed Dirac operator}
Instead of assuming that $D$ is $L^2$-invertible, as in \cite{PPST}, we assume now that there is a self-adjoint element $C\in \mathcal{L}_{G}^c(X)$ (c.f. Equation (\ref{algebra-slice})) such that $D+C$ is $L^2$-invertible. 
Thus there exists $a>0$ such that
\begin{equation}\label{spec-hypothesis}
{\rm Spec}_{L^2} (D+C)\cap (-2a,2a) = \emptyset\,.
\end{equation}

First of all, we need to make sense of  $\exp (-t(D+C)^2)$ as an element $\mathcal{L}^\infty_{G, s} (X)$. Observe that
$(D+C)^2 = D^2 + A$, with $A=C^2 + DC +  C D$.  
By its definition, $C^2$ is an element in $\mathcal{L}^c_{G}(X)$.  For the operators $DC$ and $C D$, we
proceed as follows. First we consider the operator
\[
D_{{\rm split}}:=D_{G, K} \hat{\otimes} 1+1\hat{\otimes} D_S.
\]
We know that elements of $\mathcal{L}^\infty_{G, s}(X)$ can be viewed as  $K\times K$-invariant Lafforgue's functions on $G$ with value in smoothing operators $\Psi^{-\infty}(S)$. Notice  that $D_{G,K}$ is an elliptic first order $G$-invariant differential operator on $G/K$, and $D_S$ is an elliptic operator on the slice $S$. By the very definition of $\mathcal{L}^c_G(X)$, $D_{G, K}C$ and $C D_{G, K}$ are in $\mathcal{L}^c_{G}(X)$.  And it follows from the definition of $\Psi^{-\infty}(S)$ 
that $D_S C$ and $C D_S$ are both in $\mathcal{L}^c_{G}(X)$. In summary, both $D_{{\rm split}} C$ and $C D_{{\rm split}}$ are in $\mathcal{L}^c_{G}(X)$.
On the other hand we know that
$D-D_{{\rm split}}=R$, with $R$  a 0th-order $G$-equivariant differential operator. Using the arguments in
 Lemma 4.17 in
\cite{PPST} we see that $R C$ and $C R$ are both in $\mathcal{L}^c_{G}(X)$ and thus we conclude that
$DC$ and $C D$ are in $\mathcal{L}^c_{G}(X)$. Summarizing, $(D+C)^2=D^2+A$ with $A\in \mathcal{L}^c_{G}(X)$.

Following \cite[Ch. 9, Appendix]{BGV} we consider the Volterra series
\begin{align*}
Q_t &:=\sum_{k=0}^\infty (-t)^k\int_{\Delta_k} e^{-\sigma_0 t D^2} A  e^{-\sigma_1 t D^2} \cdots A e^{-\sigma_k t D^2} d\sigma\\
&= e^{- t D^2} + \sum_{k=1}^\infty (-t)^k\int_{\Delta_k} e^{-\sigma_0 t D^2} A  e^{-\sigma_1 t D^2} \cdots A e^{-\sigma_k t D^2} d\sigma.
\end{align*}
Let $p$ be any seminorm on $ \mathcal{L}^\infty_{G, s} (N)$. We know that $e^{- t D^2} A\in \mathcal{L}^\infty_{G, s} (N)$; clearly we have  $p (e^{- t D^2} A)\leq C(p) p(A)$ so that for any $k\geq 1$ we 
have
$$ p \left( \sum_{k=1}^\infty (-t)^k\int_{\Delta_k} e^{-\sigma_0 t D^2} A  e^{-\sigma_1 t D^2} \cdots A e^{-\sigma_k t D^2} d\sigma
\right) \leq \frac{(C(p))^{k+1} p(A)^k}{k !}\,.$$
Thus the series converges in the Fr\'echet space  $ \mathcal{L}^\infty_{G, s} (N)$.

\subsection{Large time behaviour of the Connes-Moscovici projector $V(t(D+C))$ on $G$-proper manifolds without boundary.}
We now tackle the large time behaviour of the heat operator $\exp (-t(D+C)^2)$ and of the Connes-Moscovici projector $V(t(D+C))$. Let $\gamma$ be a suitable path in the complex plane, missing the spectrum of $(D+C)^2$. For example, we can take $\gamma$ to be the path given by the union of the two straight half-lines $$ {\rm Im}z=\pm m {\rm Re}z + b$$
in the half-plane ${\rm Re}z>0$; here $m={\rm tg} \phi>0$, $m$ small, and $b>0$ is smaller than the bottom of the spectrum of $(D+C)^2$. We set 
\begin{equation}\label{half-lines}
\ell^\pm (m,b):= \{z\in\CC\,|\, {\rm Im}z=\pm m {\rm Re}z + b\}.
\end{equation}

\medskip

\noindent
We shall be interested in real functions satisfying the following

\begin{assumption}\label{function-as-CM}
$f$ is an analytic function on $[0, \infty)$ that is of rapid decay, i.e. $\forall m,n$, there is an constant $M_{m,n}>0$ such that $$\operatorname{max}_{x\in [0, \infty)}|x^m \frac{\partial^n f(x)}{\partial x^n}|< M_{m,n}$$ for all $x\in [0, \infty)\,.$
\end{assumption}
%

\noindent
For a fixed function $f$ satisfying Assumption \ref{function-as-CM}, we choose $\gamma$ a suitable path in the domain of $f$ missing the spectrum of $-(D+C)^2$. Note that  both $e^{-z^2}$ and $e^{-\frac{z}{2}}\frac{1-e^{-z}}{z}$ satisfy 
Assumptions \ref{function-as-CM} and for these we can fix the $\gamma$ as above.

\noindent
The following Proposition, to be proved in  Section \ref{sect:proofs-heat}, is the main result of this subsection:

\begin{proposition}\label{prop:large-no-boundary-unperturbed}
If  $C\in \mathcal{L}^c_{G} (X)$, $D+C$ is $L^2$-invertible and $f$ satisfies Assumption , then
\begin{equation}\label{large-no-boundary-unperturbed}
 f (-t(D+C)^2)\rightarrow 0 \quad\text{in}\quad   \mathcal{L}^\infty_{G, s} (X)\quad\text{as}\quad t\to +\infty.
 \end{equation}
In particular, the Connes-Moscovici projector $V(t(D+C))-e_1$ converges to 0 in $M_{2\times 2}\Big(\mathcal{L}^\infty_{G, s} (X)\Big)$ as $t\to +\infty$.
\end{proposition}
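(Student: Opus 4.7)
The plan is to combine the Volterra expansion from the start of this section with the spectral gap $\spec_{L^2}((D+C)^2)\subset [4a^2,\infty)$. The Volterra expansion already shows directly that $e^{-t(D+C)^2}\in\mathcal{L}^\infty_{G,s}(X)$ for every $t>0$; for a general $f$ satisfying Assumption \ref{function-as-CM}, membership $f(-t(D+C)^2)\in\mathcal{L}^\infty_{G,s}(X)$ follows by writing the holomorphic functional calculus representation
\begin{equation*}
f(-t(D+C)^2) \;=\; \frac{1}{2\pi i}\int_\gamma f(-tz)\bigl(z-(D+C)^2\bigr)^{-1}dz
\end{equation*}
along the contour \eqref{half-lines}, and then expressing the resolvent in terms of the heat semigroup to reduce to the heat case as an iterated Bochner integral valued in $\mathcal{L}^\infty_{G,s}(X)$, convergence being guaranteed by the rapid decay of $f$ on $\gamma$ together with the bound $\|e^{-\tau(D+C)^2}\|\le e^{-4a^2\tau}$.

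\textbf{Key step.} To exhibit decay, I would factorize, for $t>1$,
\begin{equation*}
f\bigl(-t(D+C)^2\bigr) \;=\; f_t\bigl((D+C)^2\bigr)\cdot e^{-(D+C)^2},\qquad f_t(x):=f(-tx)\,e^{x}.
\end{equation*}
The spectral gap together with the rapid decay of $f$ gives $\sup_{x\ge 4a^2}|f_t(x)|\le C_n t^{-n}$ for every $n$, so $f_t((D+C)^2)$ is a $G$-invariant bounded operator whose $L^2$-operator norm decays faster than any power of $t^{-1}$, while $e^{-(D+C)^2}$ is a \emph{fixed} element of $\mathcal{L}^\infty_{G,s}(X)$. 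To promote this to decay in every Fr\'echet seminorm $p$, I would establish the continuity lemma
\begin{equation*}
p(BT) \;\le\; C_p\,\|B\|_{L^2\to L^2}\,p'(T),\qquad B\in\mathcal{B}(L^2(X))^G,\ T\in\mathcal{L}^\infty_{G,s}(X),
\end{equation*}
for some seminorm $p'$ depending on $p$. This follows from the slice description $\mathcal{L}^\infty_{G,s}(X)\cong\bigl(\mathcal{L}_s(G)\hat{\otimes}\Psi^{-\infty}(S)\bigr)^{K\times K}$: a $G$-invariant bounded operator acts on such kernels by convolution on the $G$-factor and by a bounded transformation on $L^2(S)$, operations which are continuous respectively with respect to the Lafforgue norm $\nu_s$ and the standard smoothing seminorms on $\Psi^{-\infty}(S)$.

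\textbf{Main obstacle.} The continuity lemma is the technical heart of the argument: without it, only $L^2$-operator norm decay is obtainable, which is insufficient since the Fr\'echet topology on $\mathcal{L}^\infty_{G,s}(X)$ is strictly finer than the $C^*$-norm topology inherited from the Roe algebra $C^*(X)^G$. A self-contained alternative is to represent the product $f_t((D+C)^2)\cdot e^{-(D+C)^2}$ as a single Bochner integral of Lafforgue-algebra elements, using a resolvent-times-heat-kernel identity such as $(z-(D+C)^2)^{-1}e^{-(D+C)^2}=-\int_1^\infty e^{(1-\tau)z}e^{-\tau(D+C)^2}d\tau$ on the sub-portion of \eqref{half-lines} where it converges and a contour deformation elsewhere; this bypasses the abstract lemma at the cost of more delicate bookkeeping with the contour.

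\textbf{Connes--Moscovici projector.} The final assertion follows entry-wise: each matrix entry of $V(t(D+C))-e_1$ can, after rewriting $(D+C)^{\pm}(D+C)^{\mp}=(D+C)^2$ on the appropriate grading half, be recognized as an operator of the form $F_t(D+C)$ with $F_t$ built from the two reference functions $e^{-z^2}$ and $e^{-z^2/2}(1-e^{-z^2})/z^2$ of Assumption \ref{function-as-CM}, the off-diagonal factor $(D+C)^{\pm}$ being absorbed into a single analytic rapid-decay function (such as $z\,e^{-z^2/2}(1-e^{-z^2})/z^2$) defined on all of $\mathbb{R}$. The main statement, proven identically for rapid-decay analytic functions on $\mathbb{R}$, then yields the desired convergence of $V(t(D+C))-e_1$ to $0$.
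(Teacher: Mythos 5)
Your strategy hinges on two ingredients, and both break down in this setting.

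\textbf{The factorization requires more than rapid decay.} You set $f_t(x)=f(-tx)e^{x}$ and claim $\sup_{x\ge 4a^2}|f_t(x)|\le C_n t^{-n}$. But Assumption \ref{function-as-CM} only gives Schwartz-type decay ($|f(y)|\le M_m |y|^{-m}$ for all $m$), while the compensating factor $e^{x}$ grows exponentially on the spectrum $[4a^2,\infty)$. For any fixed $t$ the quantity $|f(tx)|e^{x}$ is therefore \emph{unbounded} as $x\to\infty$; polynomial decay cannot beat exponential growth. The two reference functions $e^{-z^2}$ and $e^{-z/2}(1-e^{-z})/z$ happen to decay exponentially and so survive the factorization, but the proposition is stated for all $f$ satisfying the assumption, and your $f_t$ is not a bounded operator for general such $f$.

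\textbf{The continuity lemma is false.} Even granting the factorization, the inequality $p(BT)\le C_p\,\|B\|_{L^2\to L^2}\,p'(T)$ for arbitrary $G$-invariant bounded $B$ and $T\in\mathcal{L}^\infty_{G,s}(X)$ cannot hold. If it did, $\mathcal{L}^\infty_{G,s}(X)$ would be a two-sided ideal in $\mathcal{B}(L^2(X))^G$ (a fortiori in $C^*(X)^G$) with Fr\'echet seminorms controlled by the $C^*$-norm. That is incompatible with the Fr\'echet topology being strictly finer than the $C^*$-topology, which is exactly the property that makes the Lafforgue algebra useful (it is dense and holomorphically closed, \emph{not} an ideal). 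Concretely, in the slice picture an element of $\mathcal{L}^\infty_{G,s}(X)$ is a Lafforgue-class function $G\to\Psi^{-\infty}(S)$; left multiplication by $B$ does not reduce to convolution by something in $\mathcal{L}_s(G)$ together with a seminorm-continuous map on $\Psi^{-\infty}(S)$, and a merely bounded operator on $L^2(S)$ does not act continuously in the smoothing-kernel seminorms. Already in the model case $X=G=\RR^n$, $K=\{e\}$, $S=\{pt\}$, the claim amounts to: if $\hat b\in L^\infty(\hat\RR^n)$ is small and $k$ lies in the weighted space $L^1((1+|x|)^s dx)$, then $b*k$ lies in the same space with controlled norm. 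A chirp multiplier $\hat b(\xi)=\epsilon e^{iN|\xi|^2}$ defeats this. This is precisely why the paper cannot simply peel off a bounded factor: instead it establishes a structural decomposition $((D+C)^2-\lambda)^{-k}=F(\lambda)+G(\lambda)$ with $F(\lambda)$ of pseudodifferential type (uniformly $G$-compactly supported) and $G(\lambda)$ rapidly decreasing \emph{in the Fr\'echet seminorms}, writes $f=g'$ (Lemma \ref{lem:schwartzfunction}), and extracts the decay from the $t^{1-k}$ prefactor produced by integration by parts in the Cauchy formula. The effort invested in Proposition \ref{prop:inverse-perturbed} and the resolvent analysis is exactly what your continuity lemma tries to sidestep, and cannot.

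Your alternative route (representing $(z-(D+C)^2)^{-1}e^{-(D+C)^2}$ as a single Bochner integral of heat operators plus a contour deformation) moves in the right direction — it is close in spirit to what the paper actually does — but as written it still presupposes the factorization and leaves the contour deformation unexamined; also note the sign in your identity should be $-\int_1^\infty e^{(\tau-1)z}e^{-\tau(D+C)^2}d\tau$. Your remarks on reducing the Connes--Moscovici projector to the case of analytic rapid-decay functions by absorbing the odd factor are fine, but they only matter once the main convergence statement is secured.
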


\subsection{Small time behaviour of the heat kernel}
In this subsection, we state results on the small time behavior of the heat kernel for perturbed Dirac operators. Following the previous section, we consider a self-adjoint element $C\in \mathcal{L}_G^c(X)$ such that $D+C$ is $L^2$-invertible. 

In Lemma \ref{L2-behaviour-heat}, we showed that for any $K\in \mathcal{L}_G^c(X)$, the operator norm of $\exp(-\delta (D+C)^2)K-K$ converges to zero as $\delta$ goes to $0$. 
 In this section, we strengthen this result to the following one, to be proved in 
 Section \ref{sect:proofs-heat}. 

\begin{proposition}\label{prop:small-time-heat}
For any $K\in \mathcal{L}_G^c(X)$ we have that $K\exp(-\delta D^2)$ converges to $K$ in $\mathcal{L}_{G, s}^\infty(X)$ as $\delta\to 0$. Consequently, it directly follows from the Volterra series expansion of $\exp(-\delta(D+C)^2)$ that for any $C\in \mathcal{L}_G^c(X)$, 
\[
K\exp(-\delta(D+C)^2)\to K,\ \text{in}\ \mathcal{L}_{G, s}^\infty(X) \text{ as } \delta\downarrow 0.
\]
\end{proposition}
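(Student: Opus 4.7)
The plan is to establish the unperturbed convergence $K\exp(-\delta D^2)\to K$ in the Fr\'echet topology of $\mathcal{L}^\infty_{G,s}(X)$ and then derive the perturbed statement by substituting the Volterra series for $\exp(-\delta(D+C)^2)$.

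For the unperturbed case I would apply Duhamel's identity to write
\[
K\bigl(\exp(-\delta D^2)-I\bigr)=-\int_0^\delta (KD^2)\exp(-sD^2)\,ds,
\]
observing that $KD^2\in \mathcal{L}^c_G(X)$ because $K$ is smoothing with $G$-compact support and $D^2$ is a $G$-equivariant differential operator. The problem then reduces to the following auxiliary statement: for any $T\in\mathcal{L}^c_G(X)$ the path $s\mapsto T\exp(-sD^2)$ is continuous as a map $[0,+\infty)\to\mathcal{L}^\infty_{G,s}(X)$. Granted this, any continuous seminorm $p$ on $\mathcal{L}^\infty_{G,s}(X)$ satisfies
\[
p\bigl(K(\exp(-\delta D^2)-I)\bigr)\le \delta\sup_{s\in[0,\delta]}p\bigl(KD^2\exp(-sD^2)\bigr)\xrightarrow{\delta\downarrow 0}0
\]
by continuity of the Bochner integral.

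The main obstacle will be the auxiliary claim, whose difficulty lies in obtaining convergence in the (fine) Lafforgue--Fr\'echet topology rather than merely in the operator norm on $L^2(X,E)$. To handle it I would exploit the splitting $D=D_{\mathrm{split}}+R$ already introduced in the paper. Since $D_{G,K}$ and $D_S$ act on different factors of $G\times_K S$, the heat semigroup of $D_{\mathrm{split}}^2$ has the product form $\exp(-sD_{G,K}^2)\,\hat\otimes\,\exp(-sD_S^2)$; this makes $T\exp(-sD_{\mathrm{split}}^2)$ visibly an element of the tensor product $\mathcal{L}_s(G)\,\hat\otimes\,\Psi^{-\infty}(S)$ with continuous $s$-dependence on $[0,+\infty)$, using the strong continuity of each of the two heat semigroups on smooth functions together with the $G$-compactness of $\mathrm{supp}\,T$. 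One then passes from $D_{\mathrm{split}}$ to $D$ by a Volterra expansion in the perturbation $D^2-D_{\mathrm{split}}^2$, which is a first-order $G$-equivariant differential operator; left-composition with the smoothing $T$ absorbs the differential part, so the resulting Volterra series converges in $\mathcal{L}^\infty_{G,s}(X)$ with uniform bounds on compact $s$-intervals, by the same type of estimate used to define $Q_t$ earlier in this section and by arguments analogous to Lemma 4.17 of \cite{PPST}.

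For the perturbed statement, write $(D+C)^2=D^2+A$ with $A=C^2+DC+CD\in\mathcal{L}^c_G(X)$ and expand
\[
K\exp(-\delta(D+C)^2)=K\exp(-\delta D^2)+\sum_{k\ge 1}(-\delta)^k\!\int_{\Delta_k}\!K\exp(-\sigma_0\delta D^2)\,A\cdots A\,\exp(-\sigma_k\delta D^2)\,d\sigma.
\]
The first summand tends to $K$ by the unperturbed case, while each $k$-th summand carries an overall factor $\delta^k$. Iterated application of the auxiliary claim---used after each insertion of $A$, which keeps the intermediate left factors in $\mathcal{L}^c_G(X)$---gives uniform bounds on the Fr\'echet seminorms of the integrand over $\Delta_k$, so that term-by-term passage to the limit in the absolutely convergent series yields the desired convergence.
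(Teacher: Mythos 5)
Your overall strategy differs from the paper's: the paper writes $K\exp(-\delta D^2)$ as a contour integral of $K(D^2-\lambda)^{-1}$, decomposes $(D^2-\lambda)^{-1}=F(\lambda)+E(\lambda)$ with $F(\lambda)$ a symbolic parametrix of uniform $G$-compact support and $E(\lambda)$ residual and rapidly decaying, shows that $\int_\gamma e^{-\delta\lambda}KF(\lambda)\,d\lambda$ and $\int_\gamma e^{-\delta\lambda}KE(\lambda)\,d\lambda$ each converge as $\delta\to 0$ by dominated convergence (using a careful symbol computation in Part II), and finally identifies the limit with $K$ via the operator-norm estimate of Lemma \ref{L2-behaviour-heat}. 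You instead propose Duhamel plus an auxiliary continuity claim. The trouble is that your auxiliary claim is essentially the proposition itself.

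Concretely, you reduce to ``for any $T\in\mathcal{L}^c_G(X)$ the path $s\mapsto T\exp(-sD^2)$ is continuous as a map $[0,+\infty)\to\mathcal{L}^\infty_{G,s}(X)$.'' Continuity at $s=0$ of this map \emph{is} the statement $T\exp(-sD^2)\to T$ in the Lafforgue--Fr\'echet topology, i.e.\ the very claim you set out to prove (with $K$ replaced by $T=KD^2$, also in $\mathcal{L}^c_G(X)$). The Duhamel rewriting therefore does not reduce the difficulty. Moreover, your justification of the auxiliary claim is precisely where the real work is and it is not carried out. Saying that $T\exp(-sD^2_{\mathrm{split}})$ is ``visibly an element of $\mathcal{L}_s(G)\,\hat\otimes\,\Psi^{-\infty}(S)$ with continuous $s$-dependence on $[0,+\infty)$'' begs the question: continuity at $s=0$ requires showing that the kernel of $T\exp(-sD^2_{\mathrm{split}})$ converges to that of $T$ in \emph{all} of the $\nu_s$-weighted seminorms on the $G$-factor as well as all $C^\infty$-seminorms on the $S$-factor, and strong continuity of the two heat semigroups on $L^2$ (or on smooth sections) does not immediately yield convergence in this operator-kernel topology. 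The paper's Part I / Part II analysis --- the uniform bound $p(KF(\lambda))\le C_p/\|\lambda\|^2$ obtained from the quantised symbol $b(\lambda,x,\xi)$, plus the rapid decay of $E(\lambda)$ --- is exactly what substitutes for this.

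A second gap is the passage from $D_{\mathrm{split}}$ to $D$. The perturbation $D^2-D^2_{\mathrm{split}}=D_{\mathrm{split}}R+RD_{\mathrm{split}}+R^2$ is a \emph{first-order} differential operator, not a smoothing one. The Volterra arguments used in this section (both for $\exp(-t(D+C)^2)$ and for the expansion you invoke at the end) are set up for a perturbation $A\in\mathcal{L}^c_G(X)$; for those one gets the factorial bound $(C(p))^{k+1}p(A)^k/k!$ because $e^{-\sigma tD^2}A$ is bounded in every seminorm uniformly in $\sigma$. With a first-order perturbation, the insertions sit between heat factors $e^{-\sigma_j sD^2_{\mathrm{split}}}$ with $\sigma_j$ arbitrarily small, so each insertion costs a factor $\sigma_j^{-1/2}$; ``left-composition with the smoothing $T$ absorbs the differential part'' handles only the leftmost insertion and not the $k-1$ remaining ones, so the asserted uniform bounds on compact $s$-intervals do not follow from the estimate you cite. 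Establishing Volterra convergence in $\mathcal{L}^\infty_{G,s}(X)$ for a differential perturbation would need separate Fr\'echet estimates that your proposal does not supply. By contrast, the concluding deduction of the perturbed convergence from the unperturbed one (via the smoothing Volterra series for $\exp(-\delta(D+C)^2)$) agrees with the paper and is fine.
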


\smallskip
\noindent
The proof will be given in Section \ref{sect:proofs-heat}. 

\subsection{Large time behaviour on manifolds with boundary}
We now tackle the case of invertible perturbed  Dirac operators on manifolds with boundary (the {\it unperturbed}
 invertible case is treated in \cite{PPST}). 
We consider  $(Y_0,\mathbf{h}_0)$, a cocompact $G$ proper manifold with boundary $X$ and denote 
by  $(Y,\mathbf{h})$  the associated $b$-manifold. We denote by $Z_0$ a slice for $Y_0$
and by $Z$ the associated $b$-manifold.
Let $D$ be a Dirac operator on $Y$. 
Let $C\in {}^b \mathcal{L}^c_{G} (Y)$ and assume that $C$ is self-adjoint as a bounded operator on $L^2(Y)$. 
We assume that $D+C$ is $L^2$-invertible.
We also make the natural assumption that the indicial family $I(C,\lambda)$ is rapidly decreasing as a function with values in $\mathcal{L}^c_{G} (\partial Y)$
as $| {\rm Re} (\lambda)|\to +\infty$ whenever $|{\rm Im}(\lambda)|$ ranges in a finite interval.
In fact, we shall apply what follows to 
a particular situation that we describe now and for simplicity we only treat this special situation, given that is enough
for our geometric applications.
Recall, from \cite[Sec. 5.2]{PPST}, that we can construct a section $$\varphi: {}^b \mathcal{L}^\infty_{G, s, \RR} (\cyl (\partial Y)) \to {}^b \mathcal{L}^\infty_{G,s} (Y)$$  of the indicial homomorphism 
$$I : {}^b \mathcal{L}^\infty_{G,s} (Y)\to {}^b \mathcal{L}^\infty_{G, s, \RR} (\cyl (\partial Y));$$ $\varphi $ is defined by using a suitable cut-off function equal to 1 
on the boundary. The simple situation we will be dealing with, is when $C=\varphi (C_{\cyl})$, with
$C_{\cyl}$ obtained by inverse Mellin transform of $\alpha (\lambda)C_{\partial}$,
 with
$C_{\partial}\in \mathcal{L}^c_{G} (\partial Y)$ and 
$\alpha(\lambda)$ an entire function which is rapidly decreasing in ${\rm Re}\lambda$
on any horizontal strip $|{\rm Im} \lambda|<N$. From now on, we put ourselves in this particular situation.

We thus consider the perturbed operator $D+C$. We want first of all to make sense
of the heat kernel $\exp (-(D+C)^2)$ as an element in ${}^b \mathcal{L}^\infty_{G, s} (Y)$. 
Arguing as we have done  in the closed case 
we have that $(D+C)^2=D^2 + A$ with $A\in {}^b \mathcal{L}^\infty_{G, s} (Y)$.
Notice that $A$ is {\it not} a residual operator.
We  consider again the Volterra series
\begin{align*}
Q_t &:=\sum_{k=0}^\infty (-t)^k\int_{\Delta_k} e^{-\sigma_0 t D^2} A  e^{-\sigma_1 t D^2} \cdots A e^{-\sigma_k t D^2} d\sigma\\
&= e^{- t D^2} + \sum_{k=1}^\infty (-t)^k\int_{\Delta_k} e^{-\sigma_0 t D^2} A  e^{-\sigma_1 t D^2} \cdots A e^{-\sigma_k t D^2} d\sigma.
\end{align*}
Let $p$ be any seminorm on $ {}^b \mathcal{L}^\infty_{G, s} (Y)$. We know that 
$e^{- t D^2}\in  {}^b \mathcal{L}^\infty_{G, s} (Y)$, see \cite{PP2}; thus 
$e^{- t D^2} A\in {}^b \mathcal{L}^\infty_{G, s} (Y)$; since  $p (e^{- t D^2} A)\leq C(p) p(A)$ we 
have
$$ p \left( \sum_{k=1}^\infty (-t)^k\int_{\Delta_k} e^{-\sigma_0 t D^2} A  e^{-\sigma_1 t D^2} \cdots A e^{-\sigma_k t D^2} d\sigma
\right) \leq \frac{(C(p))^{k+1} p(A)^k}{k !}$$
and thus $\exp (-(D+C)^2)\in {}^b \mathcal{L}^\infty_{G, s} (Y)$. 

Regarding the large time behaviour of the heat kernel under the assumption that there exists $a>0$  
such that 
\begin{equation}\label{specrtum-perturbed}
{\rm Spec}_{L^2} (D+C)\cap [-2a,2a]=\emptyset,
\end{equation}
 the following proposition holds:

\begin{proposition}\label{prop:large-yes-boundary-perturbed}
Let $C\in {}^b\mathcal{L}^{c}_{G} (Y)$ be self-adjoint,
$C=\varphi (C_{\cyl})$, with
$C_{\cyl}$ obtained by inverse Mellin transform of $\alpha (\lambda)C_{\partial}$,
 with
$C_{\partial}\in \mathcal{L}^c_{G} (\partial Y)$ and 
$\alpha(\lambda)$ an entire function which is rapidly decreasing in ${\rm Re}\lambda$
on any horizontal strip $|{\rm Im} \lambda|<N$. 
 Let $D+C$ be $L^2$-invertible. Then
\begin{equation}\label{large-yes-boundary-perturbed}
 \exp (-t(D+C)^2)\rightarrow 0 \quad\text{in}\quad   {}^b \mathcal{L}^\infty_{G, s} (Y)\quad\text{as}\quad t\to +\infty.
 \end{equation}
 \end{proposition}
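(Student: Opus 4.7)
The approach is to represent the heat operator via a Cauchy integral
\begin{equation*}
\exp\bigl(-t(D+C)^2\bigr) \;=\; \frac{1}{2\pi i}\int_\gamma e^{-tz}\bigl(z-(D+C)^2\bigr)^{-1}\,dz,
\end{equation*}
where $\gamma$ is an angular contour in the complex plane surrounding $\operatorname{Spec}_{L^2}((D+C)^2)\subseteq[4a^2,\infty)$ while remaining at distance at least $a^2$ from it, for instance $\gamma=\{a^2+re^{\pm i\theta}:r\geq 0\}$ for some $\theta\in(\pi/2,\pi)$. Along such a contour $|e^{-tz}|$ is controlled by $e^{-a^2 t}$ times a factor decaying exponentially in $|z|$, so once we have shown that the resolvent $(z-(D+C)^2)^{-1}$ lies in ${}^b\mathcal{L}^\infty_{G,s}(Y)$ with every Fr\'echet seminorm polynomially bounded in $|z|$ uniformly along $\gamma$, the integral converges in this Fr\'echet algebra and decays at the rate $e^{-a^2 t}$, giving the claimed convergence to $0$.

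\textbf{Constructing the resolvent in the b-algebra.} To build $(z-(D+C)^2)^{-1}$ inside ${}^b\mathcal{L}^\infty_{G,s}(Y)$, I would follow the b-parametrix strategy used in \cite{PP2,PPST} for the unperturbed operator. By the special form of $C$ one has $I(C,\lambda)=\alpha(\lambda)C_\partial$, hence
\begin{equation*}
z - I\bigl((D+C)^2,\lambda\bigr) \;=\; z - \bigl(I(D,\lambda)+\alpha(\lambda)C_\partial\bigr)^2.
\end{equation*}
For $z\in\gamma$ and $\lambda\in\RR$ this expression is invertible: for $|\lambda|$ large, invertibility follows from ellipticity of $I(D,\lambda)$ together with the rapid decay of $\alpha$ on horizontal strips (so that the $\alpha(\lambda)C_\partial$ term is a small compact perturbation), while for bounded $\lambda$ it is forced by the $L^2$-invertibility of $D+C$ on $Y$ via the standard equivalence between b-invertibility and invertibility of the indicial family on the real line. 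Inverting pointwise in $\lambda$ and applying inverse Mellin transform yields a model resolvent $R_{\cyl}(z)\in{}^b\mathcal{L}^\infty_{G,s,\RR}(\cyl(\partial Y))$; the section $\varphi$ of the indicial map lifts this to $R_b(z):=\varphi(R_{\cyl}(z))\in{}^b\mathcal{L}^\infty_{G,s}(Y)$. The remainder $S(z):=I-(z-(D+C)^2)R_b(z)$ has vanishing indicial family and therefore lies in the residual ideal $\mathcal{L}^\infty_{G,s}(Y)$; the identity
\begin{equation*}
(z-(D+C)^2)^{-1} \;=\; R_b(z) + (z-(D+C)^2)^{-1}S(z)
\end{equation*}
then identifies the full resolvent, and holomorphic closure of $\mathcal{L}^\infty_{G,s}(Y)$ inside $C^*(Y_0\subset Y)^G$ places the correction term in $\mathcal{L}^\infty_{G,s}(Y)\subset{}^b\mathcal{L}^\infty_{G,s}(Y)$.

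\textbf{The main obstacle} is the uniform polynomial-in-$|z|$ control of every Fr\'echet seminorm of the resolvent along $\gamma$. This requires tracking how each seminorm interacts with the inverse Mellin transform of a holomorphic family of operators and with the lifting $\varphi$; the rapid decay of $\alpha(\lambda)$ on strips is precisely what is needed to absorb the $\lambda$-derivatives that arise. Once these estimates are established, the bound $|e^{-tz}|\leq e^{-a^2 t}$ on $\gamma$ immediately yields $\exp(-t(D+C)^2)\to 0$ in every seminorm of ${}^b\mathcal{L}^\infty_{G,s}(Y)$ as $t\to+\infty$, completing the proof.
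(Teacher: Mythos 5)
Your proposal starts from the same Cauchy-integral representation and recognizes that the crux is the structure of the resolvent on the $b$-manifold, which is the right instinct, but the construction of the parametrix has two gaps that prevent the argument from closing.

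First, the claim that $(z-(D+C)^2)^{-1}$, and likewise $R_{\cyl}(z)$ and $R_b(z)=\varphi(R_{\cyl}(z))$, belong to ${}^b\mathcal{L}^\infty_{G,s}(Y)$ (resp.\ ${}^b\mathcal{L}^\infty_{G,s,\RR}(\cyl(\partial Y))$) is false. These algebras consist of $G$-equivariant \emph{smoothing} operators with Lafforgue-type decay: their Schwartz kernels are smooth. The resolvent of a second-order elliptic $b$-operator is a $b$-pseudodifferential operator of order $-2$, whose kernel has a conormal singularity on the diagonal and therefore lies outside the algebra. This is exactly why the paper's proof does not work with $(D^2+A-\mu)^{-1}$ directly: it passes to $(D^2+A-\mu)^{-k}$ for $k$ large via the integration-by-parts trick (to gain kernel regularity), and, more fundamentally, decomposes the resolvent as a $b$-pseudodifferential piece $B(\mu)$ plus a residual piece; only after the contour integration does the result land in ${}^b\mathcal{L}^\infty_{G,s}(Y)$. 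Your "main obstacle" (seminorm estimates) presupposes membership in the algebra, which is the real gap.

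Second, the construction of $R_b(z)$ by inverting the indicial family and applying $\varphi$ does not produce a parametrix, and the remainder $S(z)=I-(z-(D+C)^2)R_b(z)$ is not in $\mathcal{L}^\infty_{G,s}(Y)$. The residual ideal requires smoothing together with rapid decay at the boundary face; "vanishing indicial family" alone does not imply smoothing. Moreover, $\varphi$ involves a cut-off supported near $\partial Y$, so your $R_b(z)$ is essentially zero in the interior and cannot approximate the interior resolvent: one must first construct a symbolic $b$-parametrix $B^{\rm sym}(\mu)$ for $(D^2+A-\mu)$ (killing the interior singularity) and only then correct the resulting smoothing-but-not-residual remainder $S(\mu)=R^{\rm sym}(\mu)+AB^{\rm sym}(\mu)$ by lifting its indicial inverse. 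This two-step structure, $B(\mu)=B^{\rm sym}(\mu)-\varphi\left(I(D^2+A-\mu)^{-1}I(S(\mu))\right)$, together with the careful analysis of joint decay in $(\lambda,\mu)$ of $A_\partial I(B^{\rm sym}(\mu),\lambda)$ using the rapid decay of $\alpha$, is the heart of the paper's proof and is missing from yours.
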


\noindent
Proceeding as in the closed case, considering thus a function $f$ 
satisfying Assumption \ref{function-as-CM}, we can establish more generally the following. 

\begin{proposition}\label{prop:large-yes-boundary-perturbed-bis}
Let $C\in {}^b\mathcal{L}^{c}_{G} (Y)$ be self-adjoint
and 
of the type  explained in the statement of the previous 
Proposition. If  $D+C$ is $L^2$-invertible then
for the Connes-Moscovici projector it holds that
 \begin{equation}\label{large-yes-boundary-perturbed-bis}
 V(t(D+C))-e_1\to 0\quad\mbox{in}~M_{2\times 2}\Big({}^b\mathcal{L}^\infty_{G, s} (Y)\Big)\quad \mbox{as} ~t\to +\infty.
 \end{equation} 
\end{proposition}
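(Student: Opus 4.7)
The plan is to adapt the closed-case strategy of Proposition \ref{prop:large-no-boundary-unperturbed} to the $b$-manifold setting. First, each of the four entries of $V(t(D+C))-e_1$ is of the form $\phi(t(D+C))$ for an entire function $\phi$ of rapid decay on $\mathbb{R}$: the diagonal entries correspond to $\phi(z)=\pm e^{-z^2}$, while the off-diagonal entries $(2,1)$ and $(1,2)$ correspond respectively to $\phi(z)=ze^{-z^2/2}$ and $\phi(z)=(e^{-z^2/2}-e^{-3z^2/2})/z$ (in the latter the apparent singularity at $z=0$ is removable). Each such $\phi$ satisfies Assumption \ref{function-as-CM}, so the proposition reduces to the $b$-analog of Proposition \ref{prop:large-no-boundary-unperturbed}: for every such $\phi$, $\phi(t(D+C))\to 0$ in ${}^b\mathcal{L}^\infty_{G,s}(Y)$ as $t\to+\infty$.

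To prove this analog, I would decompose $\phi$ into even and odd parts $\phi(z)=\chi(z^2)+z\psi(z^2)$ with $\chi,\psi$ Schwartz on $[0,\infty)$, and use Laplace-type representations $\chi(u)=\int_0^\infty e^{-su}\,d\mu_\chi(s)$ and $\psi(u)=\int_0^\infty e^{-su}\,d\mu_\psi(s)$, obtained by contour deformation from the analyticity and rapid decay, with $\mu_\chi,\mu_\psi$ rapidly decaying densities. The problem then becomes convergence in ${}^b\mathcal{L}^\infty_{G,s}(Y)$ of
\[
\int_0^\infty e^{-st^2(D+C)^2}\,d\mu_\chi(s)+t(D+C)\int_0^\infty e^{-st^2(D+C)^2}\,d\mu_\psi(s).
\]
The first integral tends to zero by Proposition \ref{prop:large-yes-boundary-perturbed} together with dominated convergence, using an exponential bound $p(e^{-st^2(D+C)^2})\leq C_p e^{-4a^2 s t^2}$ in each Fr\'echet seminorm $p$ (supplied by the spectral gap and the same heat-kernel estimates that prove Proposition \ref{prop:large-yes-boundary-perturbed}). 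The second integral requires the analog of Proposition \ref{prop:large-yes-boundary-perturbed} for the \emph{odd} operator $(D+C)e^{-u(D+C)^2}$: that it lies in ${}^b\mathcal{L}^\infty_{G,s}(Y)$ for each $u>0$ with Fr\'echet seminorms decaying exponentially as $u\to+\infty$.

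This last claim is the main technical obstacle. I would establish it by adapting the Volterra-series argument underlying Proposition \ref{prop:large-yes-boundary-perturbed}. Writing $(D+C)e^{-u(D+C)^2}=\bigl((D+C)e^{-u_0 D^2}\bigr)\cdot R_{u,u_0}$ for a small fixed $u_0>0$, with $R_{u,u_0}$ built from the Volterra expansion of $e^{-(u-u_0)(D+C)^2}$ in powers of $A=(D+C)^2-D^2\in{}^b\mathcal{L}^\infty_{G,s}(Y)$, the prefactor $(D+C)e^{-u_0 D^2}$ is a fixed element of the $b$-Lafforgue algebra (the heat kernel of $D^2$ absorbs the first-order operator $D+C$), while $R_{u,u_0}$ inherits exponential decay in $u$ from the spectral gap of $D+C$. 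The delicate point is that each Fr\'echet seminorm of the $b$-Lafforgue algebra couples smoothing on the $K$-slice, Lafforgue decay on $G$, and $b$-weight information near the boundary; carrying the unbounded factor $D+C$ through all three layers while preserving the spectral-gap decay requires the same careful heat-kernel estimates used in the proof of Proposition \ref{prop:large-yes-boundary-perturbed}, now applied to the derivative of the heat kernel rather than the heat kernel itself.
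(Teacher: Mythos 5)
The proposal takes a genuinely different route from the paper. The paper's proof is a contour-integral-plus-integration-by-parts argument: writing $f(-t(D+C)^2)=\tfrac{(k-1)!}{2\pi i}t^{1-k}\int_\gamma g(-t\mu)((D+C)^2-\mu)^{-k}d\mu$ with $g'=f$, the decay as $t\to\infty$ falls out of the structure of $((D+C)^2-\mu)^{-k}$ established in the proof of Proposition \ref{prop:large-yes-boundary-perturbed} together with $\gamma\subset\{{\rm Re}\,z>0\}$; all entries of the projector, including the off-diagonal ones carrying an extra factor of $D+C$, are handled uniformly inside that resolvent analysis. You instead reduce everything to the already-proved heat-kernel decay (Proposition \ref{prop:large-yes-boundary-perturbed}) via an even/odd decomposition $\phi(z)=\chi(z^2)+z\psi(z^2)$ and Laplace representations. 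This is more modular and, for the concrete four functions arising in $V(t(D+C))-e_1$, the Laplace measures are elementary and supported on $[1/2,3/2]$, so the small-$s$ singularity of the seminorm bound is harmless (your asserted bound $p(e^{-st^2(D+C)^2})\leq C_pe^{-4a^2st^2}$ cannot hold uniformly down to $s=0$, but it doesn't need to).

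There is, however, an error in your treatment of the odd part. The factoring $(D+C)e^{-u(D+C)^2}=\bigl((D+C)e^{-u_0D^2}\bigr)\cdot R_{u,u_0}$ with $R_{u,u_0}$ "built from the Volterra expansion of $e^{-(u-u_0)(D+C)^2}$" is not consistent: the $R_{u,u_0}$ forced by the identity is $e^{u_0D^2}e^{-u(D+C)^2}$, which contains the unbounded factor $e^{u_0D^2}$ and is not a Volterra remainder, nor obviously a member of the algebra. The fix is to factor instead as $(D+C)e^{-u(D+C)^2}=\bigl((D+C)e^{-u_0(D+C)^2}\bigr)\cdot e^{-(u-u_0)(D+C)^2}$. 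Here the first factor is a \emph{fixed} element of ${}^b\mathcal{L}^\infty_{G,s}(Y)$ — this is where the Volterra expansion of $e^{-u_0(D^2+A)}$ is used, noting that $(D+C)e^{-\sigma_0u_0D^2}\in{}^b\mathcal{L}^\infty_{G,s}(Y)$ with a $\sigma_0^{-1/2}$ seminorm singularity that is integrable over the simplex — and the second factor tends to $0$ in ${}^b\mathcal{L}^\infty_{G,s}(Y)$ by Proposition \ref{prop:large-yes-boundary-perturbed}. Joint continuity of multiplication on the Fr\'echet algebra then gives the claim. With this correction the argument is sound, though you should also remark explicitly that the CM projector matrix identity $\phi(t(D+C))$ is taken blockwise with respect to the $\ZZ_2$-grading, which is implicit in your list of $\phi$'s.
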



\noindent
Proofs will be given in Section \ref{sect:proofs-heat}

\section{Delocalized eta invariants associated to smoothing perturbations}\label{sect:modified}
Let $(X,\mathbf{h})$ be a cocompact $G$-proper manifold of odd dimension, without boundary. As we have assumed $G/K$ is of even dimension in Sec. \ref{sect:preliminaries}, the slice $S$ is of odd dimension.
In this section we shall prove  results on delocalized eta invariants associated to {\it perturbed}
Dirac operators. More precisely we shall consider
 a perturbed Dirac operator $D+C$ with $C$ an element in $\mathcal{L}^\infty_{G, s} (X,E)$ making the operator $D+C$ invertible.

 \begin{proposition}\label{prop:eta-conv-perturbed}
 Let $(X,\mathbf{h})$ be an odd dimensional  cocompact $G$-proper manifold without boundary and 
 $\mathbf{h}$ a $G$-invariant Riemannian metric. Let $D$ be a Dirac-type operator associated to a Clifford connection. 
If $C\in \mathcal{L}^c_G (X)$ 
  is such that $D+C$ is $L^2$-invertible then 
$$\eta_g (D+C):=\frac{1}{\sqrt{\pi}} \int_0^\infty \tau^{X}_{g} ((D+C) \exp (-t(D+C)^2) \frac{dt}{\sqrt{t}}$$
is well defined.
\end{proposition}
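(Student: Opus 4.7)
The plan is to split the integral as $\int_0^\infty = \int_0^1 + \int_1^\infty$ and to establish absolute convergence on each piece by controlling seminorms of the operator $B_t := (D+C)e^{-t(D+C)^2}$ in the Fr\'echet algebra $\mathcal{L}^\infty_{G,s}(X)$ and then invoking continuity of the trace $\tau^X_g$. As a preliminary step, one checks that $B_t \in \mathcal{L}^\infty_{G,s}(X)$ for each $t>0$: writing $(D+C)^2 = D^2 + A$ with $A = C^2 + DC + CD \in \mathcal{L}^c_G(X)$ as in Section~\ref{section:heat}, and using the Volterra expansion, one obtains
\[
B_t = (D+C)e^{-tD^2} + (D+C)\sum_{k\ge 1}(-t)^k\int_{\Delta_k} e^{-\sigma_0 tD^2} A\cdots A\, e^{-\sigma_k tD^2}\,d\sigma,
\]
whose leading part $De^{-tD^2} + Ce^{-tD^2}$ is in $\mathcal{L}^\infty_{G,s}(X)$ (by standard properties of the heat kernel of $D$ from \cite{PPST} together with the algebra property applied to $Ce^{-tD^2}$), while the remaining series converges in every seminorm by the factorial estimates recorded in Section~\ref{section:heat}.

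For the short-time interval $t\in(0,1]$ one treats each summand of the displayed Volterra expansion separately. The integrand $\tau^X_g(De^{-tD^2})/\sqrt{t}$ is integrable at $0$ by the odd-dimensional pointwise heat-kernel asymptotics used in the proof of Theorem~\ref{thm etadefine-intro} in \cite{PPST}; this argument is purely local and does \emph{not} require any invertibility of $D$. Next, Proposition~\ref{prop:small-time-heat} yields $Ce^{-tD^2}\to C$ in $\mathcal{L}^\infty_{G,s}(X)$ as $t\downarrow 0$, so by continuity of $\tau^X_g$ the scalar $\tau^X_g(Ce^{-tD^2})$ is bounded on $(0,1]$ and integrable against $dt/\sqrt{t}$. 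Each higher Volterra remainder carries an explicit factor $t^k$ with $k\ge 1$, making its seminorms $O(t)$ and therefore harmless after division by $\sqrt{t}$.

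For the long-time interval $t\in[1,\infty)$ one exploits the spectral gap \eqref{spec-hypothesis}. By the functional calculus of the self-adjoint operator $D+C$, write $B_t = B_1\cdot \exp(-(t-1)(D+C)^2)$; the first factor is a fixed element of $\mathcal{L}^\infty_{G,s}(X)$ produced above. The second factor is controlled by Proposition~\ref{prop:large-no-boundary-unperturbed} applied to the rapidly decaying analytic function $f(z) = e^{-z}$: its proof, to be given in Section~\ref{sect:proofs-heat} via a contour integral around the resolvent set of $(D+C)^2$ at distance at least $4a^2$ from the origin, actually delivers an exponential bound of the form $\le C_p\,e^{-2a^2 (t-1)}$ for every continuous seminorm $p$ on $\mathcal{L}^\infty_{G,s}(X)$. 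Joint continuity of the product in this Fr\'echet algebra and of $\tau^X_g$ then yields $|\tau^X_g(B_t)|\le C e^{-a^2 t}$ for $t\ge 1$, which is certainly integrable against $dt/\sqrt{t}$.

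The step I expect to require the most care is extracting the \emph{quantitative} long-time estimate. Proposition~\ref{prop:large-no-boundary-unperturbed} is stated only as a qualitative limit, so its contour-integral proof has to be inspected in order to convert the spectral-gap width $2a$ into an explicit exponential decay rate valid in every seminorm of $\mathcal{L}^\infty_{G,s}(X)$; without such a quantitative version, convergence of $\int_1^\infty\tau^X_g(B_t)\,dt/\sqrt{t}$ cannot be concluded from that proposition alone. Once this quantitative refinement is in place, the three ingredients—local odd-dimensional heat asymptotics from \cite{PPST} at $t\downarrow 0$, the approximation property of Proposition~\ref{prop:small-time-heat}, and the gap-induced exponential decay at $t\to\infty$—combine to prove that $\eta_g(D+C)$ is well defined.
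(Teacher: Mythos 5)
Your proof follows essentially the same route as the paper's: split the integral at $t=1$; near $t=0$ use the Volterra expansion to reduce to the unperturbed term $D e^{-tD^2}$ (local heat-kernel asymptotics from \cite{PPST}), the term $C e^{-tD^2}$ (Proposition \ref{prop:small-time-heat}), and an $O(t)$ remainder; near $t=\infty$ use the spectral gap via Proposition \ref{prop:large-no-boundary-unperturbed}. Your caveat that the bare qualitative convergence in that proposition is not by itself sufficient for integrability is well taken --- the paper's own proof likewise appeals to ``a similar estimate'' rather than to the proposition verbatim, and the contour argument of Section \ref{sect:proofs-heat}, in which the contour stays in $\Re z\ge b>0$, does deliver the required integrable (in fact exponential) decay rate.
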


\begin{proof}
We made  sense of  $\exp (-t(D+C)^2)$ as an element $\mathcal{L}^\infty_{G, s} (X)$
and proved that  $\exp (-t(D+C)^2))$
is converging to $0$ in  $ \mathcal{L}^\infty_{G, s} (X)$ as $t\to +\infty$. A similar estimates show that
$(D+C)\exp (-t(D+C)^2)$ is of $o(t)$ as $t\to \infty$. (c.f. Proposition \ref{prop:large-no-boundary-unperturbed}).
This implies that 
$$\frac{1}{\sqrt{\pi}} \int_1^\infty \tau^{X}_{g} ((D+C) \exp (-t(D+C)^2) \frac{dt}{\sqrt{t}}$$
converges at $t=+\infty$. 

On the other hand, by construction, $\exp (-t (D^2+A))- \exp (-t D^2)$ is $O(t)$ in $ \mathcal{L}^\infty_{G, s} (X)$. Since  $C \exp (-t D^2)\to C$ as $t\to 0$ in $\mathcal{L}^\infty_{G, s}(X)$ by Proposition \ref{prop:small-time-heat}, we conclude that
$$\frac{1}{\sqrt{\pi}} \int_0^1 \tau^{X}_{g} ((D+C) \exp (-t(D+C)^2) \frac{dt}{\sqrt{t}}$$
is also convergent as the integral 
\[
\frac{1}{\sqrt{\pi}} \int_0^1 \tau^{X}_{g} (D \exp (-tD^2) \frac{dt}{\sqrt{t}}
\]
converges (\cite[Theorem 5.1]{PPST}). The proof of the proposition is complete.
\end{proof} 
 
\section{The  delocalized index theorem corresponding to smoothing perturbations}\label{sect:aps-perturbed}
In this section we shall prove an index formula for a Dirac operator that admits a smoothing perturbation
 of the boundary operator that makes it invertible. 

As in the previous section, we do  not assume that $D_\partial$ is $L^2$-invertible; instead 
we assume the existence of an operator
 $C_\partial \in \mathcal{L}^c_{G} (X)\equiv \mathcal{L}^c_{G} (\partial Y)$ such that  $D_{\partial} +C_\partial$ is $L^2$-invertible. 
 We know  then that  
$$\eta_g (D_{\partial}+C_\partial):=\frac{1}{\sqrt{\pi}} \int_0^\infty \tau^{X}_{g} ((D_{\partial}+C_\partial) \exp (-t(D_{\partial}+C_\partial)^2) \frac{dt}{\sqrt{t}}$$
is well defined. Our goal in this section is to show that this delocalized eta invariant is the boundary correction term in a delocalized APS index theorem.
Geometric applications will be given in the next section.

\medskip

\medskip
We show first of all that we can lift the perturbation $C_\partial\in \mathcal{L}^c_G (X)$ to a perturbation $C_Y\in {}^b \mathcal{L}^c_G (Y)$.
We follow closely \cite{Mel-P1}.
Recall the short exact sequence (\ref{b-short-c}) of algebras
\[
0\to  \mathcal{L}^c_G (Y)\to 
{}^b \mathcal{L}^c_G (Y)\xrightarrow{I} {}^b \mathcal{L}^c_{G,\RR} ({\rm cyl}(\partial Y))\to 0.
\]
Using a suitable cut-off function near the boundary, we define as before a  section $\varphi:  {}^b \mathcal{L}^c_{G,\RR} ({\rm cyl}(\partial Y))\rightarrow {}^b \mathcal{L}^c_{G} (Y)$ of the indicial homomorphism.  
 Thus we can define an operator on $Y$ by defining a translation invariant  operator on ${\rm cyl}(\partial Y)$ and then applying
$\varphi$ to it. Starting with $C\in \mathcal{L}^c_G (X)$, with $X=\partial Y_0\equiv \partial Y$,
we thus want to define a translation invariant  operator on ${\rm cyl}(\partial Y)$;  by inverse Mellin transform, we can  equivalently define 
the indicial family of such an operator. 

Let $\rho\in C^\infty_c (\RR)$  be non negative,  even and have integral 1, so that
$\rho_\epsilon (t):= \epsilon^{-1} \rho (t/\epsilon)$ approximates $\delta_0$ as $\epsilon\downarrow 0$.
The Fourier-Laplace transform $$\widehat{\rho}_\epsilon (z)=\int_\RR e^{-it z} \rho_\epsilon (t) dt, \quad z\in\CC,$$ 
is therefore entire,  even,  real for real $z$
and has value at $0$ equal to 1.  Moreover, it is rapidly decreasing in ${\rm Re}z$
on any horizontal strip $|{\rm Im z}|<N$ and  as $\epsilon\downarrow 0$ it approximates the constant function 1 
uniformly on any compact subset of $\CC$.

We consider the holomorphic family of operators on $X=\partial Y$ given by 
\begin{equation}\label{essepsilon}
 S(\epsilon,z):=\widehat{\rho}_\epsilon (z)C_\partial
 \end{equation}
and consider the inverse Mellin transform for $z=\lambda\in\RR$. By applying suitable
identifications near the boundary, this defines an element $C^+_{\rm cyl} (\epsilon)$
in 
${}^b \mathcal{L}^\infty_{G,s_0, \RR} ({\rm cyl}(\partial Y))$ where we do not write the bundles
as usual; see
\cite[Equation (8.5)]{Mel-P1}. By applying the section
$\varphi:  {}^b \mathcal{L}^\infty_{G,s_0,\RR} ({\rm cyl}(\partial Y))\rightarrow {}^b \mathcal{L}^\infty_{G, s} (Y)$
of the indicial homomorphism 
to this element we finally define 
$$C^{+}_Y (\epsilon)\,,\quad C^{-}_Y (\epsilon):= (C^{+}_Y (\epsilon))^*\,.$$
We have now defined an operator 

\[
D + C_Y (\epsilon):= \left( \begin{array}{cc} 0 & D^- + C^{-}_Y (\epsilon)\\
D^+ + C^{+}_Y (\epsilon) & 0
\end{array} \right).
\]
The indicial family $I(D^+ + C^{+}_Y (\epsilon), \lambda)$, $\lambda\in\RR$ is $L^2$-invertible
for $\epsilon$ small;
indeed, up to identifications near the boundary,
$$I(D^+ + C^{+}_Y (\epsilon), \lambda)=i\lambda + S(\epsilon,\lambda) + D_\partial$$
which is invertible by construction at $\lambda=0$ and it is invertible if $\lambda\in\RR\setminus \{0\}$ given that 
$D_\partial$ is self-adjoint and $\epsilon$ is small, see \cite[Lemma 8]{Mel-P1}. 
From now on we fix $\epsilon$ small as to ensure that $I(D^+ + C^{+}_Y (\epsilon), \lambda)=i\lambda + S(\epsilon,\lambda) + D_\partial$
is $L^2$-invertible for all $\lambda\in\RR$. We then set
\begin{equation}\label{R}
R(\lambda):= S(\epsilon,\lambda):=\widehat{\rho}_{\epsilon} (\lambda)C_\partial
\end{equation}
and remark that 
\begin{equation}\label{at-zero}
R(0)= C_\partial\quad\text{and}\quad R(\lambda)=R(-\lambda)
\end{equation}
We write $C_Y$ instead of $C_Y (\epsilon)$.\\
Consequently we have a well-defined index class
$$\Ind (D+C_Y)\in K_0 (\mathcal{L}^\infty_{G, s} (Y))=K_0 (C^* (Y)^G)$$
that can be defined \`a la Connes-Skandalis through a true parametrix or, equivalently, through
the heat kernel and  the {\it improved} Connes-Moscovici projector $V^b (D+C_Y)$. This index class
does not depend on $\epsilon$, for $\epsilon$ small, or on the particular choice of the function $\rho\in C^\infty_c (\RR)$. It is also independent of the choices involved in
the definition of the section\footnote{Two different choices 
differ by a residual operator and we know that the index class does not change by the addition of a residual operator.} $\varphi$
and for this reason the following notation is well-posed:

\medskip
\noindent
{\bf Notation.} Following \cite{PiazzaSchick_BCrho} we shall also denote this index class by
$\Ind (D,C_\partial)$.

\begin{remark}\label{comparison-wahl}
We have lifted the perturbation $C_\partial$ to a global perturbation $C_Y$ on $Y$ and we have 
insisted that this perturbation $C_Y$ were a $b$-pseudodifferential operator. This is necessary in order to
use the $b$-calculus and  prove the APS index formula.
However, as far as index classes are concerned, we can simply extend the operator $C_\partial$ using a cut-off function;
the resulting operator is {\em not} in the $b$-calculus but   it can be proved nevertheless that we still get an index class, called the {\it cylindrical} index class in \cite{LLP}.
It can be established that the $b$-index class and  the {\it cylindrical} index class are in fact equal; the detailed argument given in 
the proof of Theorem 10.1 in \cite{LLP} can be easily adapted to the present case.
\end{remark}

\medskip
We also have the Connes-Moscovici projectors $V(D+C_Y)$ and $V(D_{{\rm cyl}} + C_{\rm cyl})$
that define together a relative class
$$ [V(D+C_Y),e_1,r_t]\in  
K_0({}^b \mathcal{L}^\infty_{G, s} (Y),{}^b \mathcal{L}^\infty_{G,s,\RR} ({\rm cyl}(\partial Y)))$$
with $e_1:=\begin{pmatrix} 0&0\\0&1 \end{pmatrix}$ and $r_t $, $t\in [1,+\infty]$, defined as follows

\begin{equation}\label{pre-wassermann-triple-rt}
r_t:= \begin{cases} V(t (D_{\cyl}+C_{{\rm cyl}})),
\;\;\quad\text{if}
\;\;\;t\in [1,+\infty),\\
e_1, \;\;\;\;\;\;\;\;\;\;\;\;\;\,\text{ if }
\;\;t=\infty.
 \end{cases}
\end{equation}
Here we have used Proposition \ref{prop:large-yes-boundary-perturbed} to see that this is well-defined.\\
Proceeding as in \cite{PPST} we then have that

\begin{equation}\label{abs=rel-perturbed}
\langle [\tau^Y_g], \Ind (D+C_Y) \rangle=  \langle [\tau^{Y,r}_g,\sigma^{\partial Y}_g], [V(D+C_Y ),e_1,r_t]\rangle\end{equation}

We shall use this equality in order to prove the following theorem.

\begin{theorem}
\label{theorem-aps-ind-perturbed}
Let $C_\partial\in  \mathcal{L}^c_{G} (\partial Y)$ be  such that $D_\partial + C_\partial$ is $L^2$-invertible.
Then there exists 
a well defined 
index class $\Ind (D,C_\partial)\in K_0 (\mathcal{L}^\infty_{G, s} (Y))=K_0 (C^* (Y_0\subset Y, E)^G)$. Moreover,
for the delocalized index $\langle [\tau^Y_g], \Ind (D,C_\partial) \rangle$ the following formula holds
\begin{equation}\label{main-perturbed}
\langle [\tau^Y_g], \Ind (D,C_\partial) \rangle= \int_{(Y_0)^g} c^g {\rm AS}_g (D_0) - \frac{1}{2} \eta_g (D_{\partial}+C_\partial)
\end{equation}
with 
$$\eta_g (D_{\partial}+C_\partial)=\frac{1}{\sqrt{\pi}} \int_0^\infty \tau^{X}_{g} ((D_{\partial}+C_\partial) \exp (-t(D_{\partial}+C_\partial)^2) \frac{dt}{\sqrt{t}}$$
\end{theorem}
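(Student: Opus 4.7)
The plan is to follow the template of Theorem \ref{intro:0-delocalized-aps} established in \cite{PPST} for the unperturbed case, now applied to the globally perturbed operator $D+C_Y$. The preliminary point is that the construction of $C_Y$ from $C_\partial$ given right before the theorem statement makes $D+C_Y$ \emph{fully elliptic} in the $b$-calculus: its indicial family $I(D^++C^+_Y,\lambda)=i\lambda+R(\lambda)+D_\partial$, with $R$ as in \eqref{R}, is $L^2$-invertible for every $\lambda\in\RR$ (at $\lambda=0$ by the hypothesis on $C_\partial$, and at $\lambda\neq 0$ by self-adjointness of $D_\partial$ together with smallness of $\epsilon$). Hence a true $b$-parametrix exists modulo $\mathcal{L}^\infty_{G,s}(Y)$, yielding a Connes-Skandalis index class $\Ind(D,C_\partial):=\Ind_\infty(D+C_Y)\in K_0(\mathcal{L}^\infty_{G,s}(Y))$. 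This class is independent of the auxiliary choices ($\epsilon$, the mollifier $\rho$, and the section $\varphi$) because two different choices differ by a residual operator.

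To prove the formula \eqref{main-perturbed}, I will invoke the equality \eqref{abs=rel-perturbed} to pass to the relative pairing $\langle [\tau^{Y,r}_g,\sigma^{\partial Y}_g],[V(D+C_Y),e_1,r_t]\rangle$. This pairing splits, exactly as in the unperturbed case of \cite{PPST}, into a bulk term of the form $\tau^{Y,r}_g(V(D+C_Y)-e_1)$ together with a cylindrical transgression term $\tfrac{1}{2}\int_1^\infty \sigma^{\partial Y}_g(\dot r_t,r_t)\,dt$ (up to conventional signs). Convergence of the transgression at $t=\infty$ is guaranteed by Proposition \ref{prop:large-yes-boundary-perturbed-bis}, and convergence at $t=1$ is automatic. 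Because $D+C_Y$ is fully elliptic along the real line, the $b$-trace $\tau^{Y,r}_g(V(D+C_Y)-e_1)$ agrees with $\lim_{t\downarrow 0}\tau^{Y,r}_g(V(t(D+C_Y))-e_1)$, and extending the transgression down to $t=0$ matches the short-time limit so that the total expression is $t$-independent.

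The short-time analysis of $\tau^{Y,r}_g(V(t(D+C_Y))-e_1)$ is then performed by the usual localization: since $C_Y$ is a $b$-smoothing operator, the Volterra series expansion of $\exp(-t(D+C_Y)^2)$ around $\exp(-tD^2)$ contributes only $O(t^{1/2})$ terms by Proposition \ref{prop:small-time-heat} and the analysis in Section \ref{sect:modified}; the surviving part is the standard heat-kernel diagonal, which under $\tau^{Y,r}_g$ (integrated against the cut-off $c^g$ over $G/Z_g$) reduces to the local Atiyah-Segal integrand over the fixed-point manifolds $(Y_0)^g$, yielding $\int_{(Y_0)^g} c^g\,{\rm AS}_g(D_0)$. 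For the cylindrical transgression term, I will substitute the explicit formula \eqref{1-eta} for $\sigma^{\partial Y}_g$ in terms of the indicial family, and then perform the $\lambda$-integration first. Using the symmetry $R(-\lambda)=R(\lambda)$, the normalization $R(0)=C_\partial$ from \eqref{at-zero}, and the rapid decay of $\widehat\rho_\epsilon$ on horizontal strips, a contour deformation argument in the spirit of \cite{Mel-P1} collapses the $\lambda$-integral, producing $\exp(-t(D_\partial+C_\partial)^2)$ times $(D_\partial+C_\partial)$; the remaining $t$-integral is precisely $-\tfrac{1}{2}\eta_g(D_\partial+C_\partial)$.

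The main obstacle will be this last contour-deformation step: the Volterra expansion of $V(t(D_{\cyl}+C_{\cyl}))$ produces cross-terms between $D_\cyl$ and $C_\cyl$ whose indicial families involve products of the form $R(\lambda_1)\cdots R(\lambda_k)$ with different spectral parameters. One must verify that after $\lambda$-integration these reassemble exactly into the perturbed heat semigroup of $D_\partial+C_\partial$, rather than some ``smeared'' variant depending on $\rho$. This is a genuine technical point where the evenness and rapid decay of $\widehat\rho_\epsilon$, combined with the independence of the index class on the choice of $\rho$, are both essential: the latter guarantees that the final answer does not depend on the mollifier, while the former allows one to contract $R(\lambda)$ to $C_\partial$ at the level of the contour integral, thereby identifying the output with $\eta_g(D_\partial+C_\partial)$ as in \eqref{intro:eta-large t} applied to the perturbed boundary operator.
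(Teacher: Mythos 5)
Your architecture is sound and matches the paper up through the bulk term: existence of $\Ind(D,C_\partial)$, passage to the relative pairing via \eqref{abs=rel-perturbed}, rescaling $s(D+C_Y)$, and the Volterra/Proposition~4.47-of-\cite{PPST} argument giving $\int_{(Y_0)^g}c^g\,\mathrm{AS}_g(D_0)$ as the $s\downarrow0$ limit of the regularized $b$-trace. (One small misstatement: the $b$-trace of $V(D+C_Y)-e_1$ at $t=1$ does not agree with its $t\downarrow0$ limit; what holds is that the sum of bulk term at scale $s$ and transgression from $s$ to $\infty$ is $s$-independent, which is what lets one pass to $s\downarrow0$.)

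The genuine gap is in the transgression term, which is the crux of the proof. You propose to ``collapse the $\lambda$-integral by a contour-deformation argument'' using evenness and rapid decay of $\widehat\rho_\epsilon$, citing independence of the index class on $\rho$ as insurance. This would fail, or at least is not a workable plan as stated: the indicial family $B(\lambda)=D_\partial+R(\lambda)$ enters the integrand both quadratically inside $e^{-t^2(\lambda^2+B(\lambda)^2)}$ and linearly outside, and one cannot shift the $\lambda$-contour to collapse $R(\lambda)$ to $R(0)=C_\partial$ because $B(\lambda)^2$ is not of the form ``holomorphic scalar times fixed operator'' and the resolvent of $B(\lambda)^2$ has no useful analytic continuation off the real line. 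The paper instead does two things you do not anticipate. First, a purely algebraic ``Claim'': computing $\tau_g^{\partial Y}([\partial_t r,r]\partial_\lambda r)$ and exploiting cyclicity of $\tau_g$ (the only noncommuting object being $\partial_\lambda B(\lambda)$, which appears at most once per term, so it can always be moved to the rightmost slot) to reduce the integrand to $-2it\,\tau_g\bigl(e^{-t^2(\lambda^2+B(\lambda)^2)}(B(\lambda)-\lambda\partial_\lambda B(\lambda))\bigr)$. Second, an adaptation of Loya's deformation argument: one introduces $\zeta(r)=\tfrac1\pi\int_0^\infty\!\!\int_\RR \tau_g\bigl(e^{-t^2(\lambda^2+B(r\lambda)^2)}t(B(r\lambda)-\lambda\partial_\lambda B(r\lambda))\bigr)d\lambda\,dt$, proves $d\zeta/dr=0$ via Duhamel's formula and the trace property (the derivative organizes itself into a total $t$-derivative plus a commutator under $\tau_g$), and evaluates at $r=0$ where $B(0)=D_\partial+C_\partial$ is $\lambda$-independent, so the Gaussian $\lambda$-integral factors out and produces exactly $\tfrac{1}{2\sqrt\pi}\int_0^\infty e^{-t(D_\partial+C_\partial)^2}(D_\partial+C_\partial)\tfrac{dt}{\sqrt t}$. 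The supporting convergence estimates are an analogue of Loya's Lemma~3.10 (Lemma~\ref{lemma-lail} here). Your diagnosis of the difficulty — cross-terms in the Volterra expansion producing a $\rho$-smeared variant — is correct, but the resolution is this $r$-deformation, not a contour shift; the $\rho$-independence of the index class is a consequence, not an ingredient.
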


\begin{proof}
We have already discussed the first part of the statement. 
We thus concentrate on the proof of \eqref{main-perturbed}.
Let us consider the right hand-side. 

\smallskip
\noindent
{\bf Notation.}
To simplify the notation we set
$$A^\pm  := D^\pm + C^\pm_Y \,\quad 
A^\pm_{{\rm cyl}}   := D^\pm_{{\rm cyl}} + C^\pm_{{\rm cyl}}\,,\quad
B (\lambda) = D_\partial + R(\lambda)$$
where we recall that $R(\lambda)=\widehat{\rho}_{\epsilon} (\lambda)C_\partial$, so that  $R(0)=C_\partial$ and $R(\lambda)=R(-\lambda)$.

\smallskip
\noindent
By definition of 
relative pairing we have:
\begin{equation}\label{relative-pairing}
\langle [\tau^{Y,r}_g,\sigma^{\partial Y}_g], [V(D+C_Y ), e_1, r_t ]\rangle=
 \tau^{Y,r}_g (e^{-A^- A^+})- \tau^{Y,r}_g (e^{-A^+ A^-})
+ \int_1^\infty \sigma^{\partial Y}_g ([\dot{r}_t  ,r_t ],r_t )dt\,.
\end{equation}
We now rescale the operator on $Y$ and consider $s(D+C_Y)$. This defines the same index class $\Ind (D,C_\partial)$
and so
$$\langle [\tau^Y_g], \Ind (D,C_\partial) \rangle= 
\tau^{Y,r}_g (e^{-s^2 A^- A^+})- \tau^{Y,r}_g (e^{-s^2 A^+ A^-})
+ \int_s^\infty \sigma^{\partial Y}_g ([\dot{r}_t ,r_t ],r_t)dt\,.$$
Observe that
$$s^2  A^- A^+ = s^2 D^- D^+  + s^2 F^{-,+} \quad\text{with}\quad F^{-,+}\in {}^b \mathcal{L}^\infty_{G, s} (Y)$$
and similarly for $s^2 A^+ A^-$.
By a classic Volterra series argument we obtain that
$$\lim_{s\downarrow 0} \tau^{Y,r}_g (e^{-s^2 A^- A^+})- \tau^{Y,r}_g (e^{-s^2 A^+ A^-})= \lim_{s\downarrow 0}  \tau^{Y,r}_g (e^{-s^2 D^- D^+})- \tau^{Y,r}_g (e^{-s^2 D^+ D^-})$$
and by \cite[Proposition 4.47]{PPST} we thus have 
$$\lim_{s\downarrow 0} \tau^{Y,r}_g (e^{-s^2 A^- A^+})- \tau^{Y,r}_g (e^{-s^2 A^+ A^-})= \int_{(Y_0)^g} c^g {\rm AS}_g (D_0)\,.$$
Consequently, from the $s$-independence of the index class we obtain the existence of the
integral
$$  \int_0^\infty \sigma^{\partial Y}_g ([\dot{r}_t  ,r_t ],r_t )dt$$
and the formula 
$$\langle [\tau^Y_g], \Ind (D,C_\partial) \rangle= 
 \int_{(Y_0)^g} c^g {\rm AS}_g (D_0)
+ \int_0^\infty \sigma^{\partial Y}_g ([\dot{r}_t ,r_t ],r_t )dt\,.$$
We consider 
 $ \int_0^\infty \sigma^{\partial Y}_g ([\dot{r}_t  ,r_t ],r_t )dt$
where we recall that 
$$\int_0^\infty\sigma^{\partial Y}_g ([\dot{r}_t ,r_t ],r_t )dt=  
\frac{i}{2\pi} \int_\RR  \tau^{\partial Y}_g (\partial_\lambda (I([\dot{r}_t ,r_t ],\lambda))\circ I(r_t,\lambda))d\lambda.$$
We can integrate by parts in $\lambda$ on the right hand side, given that the two terms are rapidly decreasing
in $\lambda$. Thus  we consider 
\begin{equation}\label{reduction-to-eta}
 \frac{i}{2\pi}\int_0^\infty \left( 
 \int_\RR  \tau^{\partial Y}_g ( (I([\dot{r}_t ,r_t ],\lambda)\circ \partial_\lambda(I(r_t ,\lambda))d\lambda
 \right) dt.
\end{equation}

\begin{lemma}\label{lemma-key}
 The integral \eqref{reduction-to-eta} is equal to 
\[
-\frac{1}{2\sqrt{\pi}} \int_0^\infty \tau^{\partial Y}_g (D_\partial + C_{\partial}) \exp (-t(D_{\partial}+C_\partial)^2 )\frac{dt}{\sqrt{t}}.
\]
\end{lemma}

\smallskip
\noindent
\begin{proof}
We are  considering the integral (\ref{reduction-to-eta})
\begin{equation*}
  \int_0^\infty \left( \int_\RR  \tau^{\partial Y}_g ( (I([\dot{r}_t ,r_t ],\lambda)\circ \partial_\lambda(I(r_t ,\lambda))d\lambda
 \right)dt.
\end{equation*}
We write down the indicial family of $r_t $. By definition
$$r_t = \left( \begin{array}{cc} e^{-t^2 A_{\cyl}^- A_{\cyl}^+} & e^{-\frac{t^2}{2}A_{\cyl}^- A_{\cyl}^+}
\left( \frac{I- e^{-t^2A_{\cyl}^- A_{\cyl}^+}}{t^2 A_{\cyl}^- A_{\cyl}^+} \right) t A_{\cyl}^-\\
e^{-\frac{t^2}{2}A_{\cyl}^+  A_{\cyl}^-} t A^+& I- e^{- t^2 A_{\cyl}^+ A_{\cyl}^-}
\end{array} \right)$$
where we recall that $A^\pm_{{\rm cyl}}  := D^\pm_{{\rm cyl}} + C^\pm_{{\rm cyl}}$.
Recall, also, that we are using the notation $B (\lambda) =D_\partial + R(\lambda)$
where, once again,  we do not write the $\epsilon$ in $R$ because we have fixed it.
The indicial family of $r_t$ is given by 
$$I(r_t ,\lambda)= \left( \begin{array}{cc} e^{-t^2 (\lambda^2 +B (\lambda)^2)} & e^{-\frac{t^2}{2}(\lambda^2 + B (\lambda)^2)}
\left( \frac{I- e^{-t^2 (\lambda^2 + B (\lambda)^2)}}{t^2 ( \lambda^2 + B (\lambda)^2)}\right) t (-i\lambda+B (\lambda))\\
e^{-\frac{t^2}{2}(\lambda^2 +B (\lambda)^2)} t (i\lambda +B (\lambda))& I- e^{- t^2 (\lambda^2 +B (\lambda)^2)}
\end{array} \right).$$
We set 
$$r(t,\lambda):= I(r_t ,\lambda)\,.$$
We now consider the integrand   in \eqref{reduction-to-eta}:

\smallskip
\noindent
{\bf Claim.} We have the equality
\begin{equation}
\label{eq-fe}
\tau_g^{\partial Y} \left(  [\partial_t r(t,\lambda), r(t,\lambda)]\circ \partial_\lambda r(t,\lambda)\right)=-2it \tau_g\left(e^{-t^2 (\lambda^2 +B (\lambda)^2)}(B(\lambda)-\lambda\partial_\lambda B(\lambda))\right).
\end{equation}
\smallskip
\noindent
{\bf Proof of the claim.}\\
Computing the $t$-derivative of $r(t,\lambda)$ and we see that the $2\times 2$-matrix $Q(t,\lambda):=\left[\partial_t r(t,\lambda), r(t,\lambda)\right]$
has entries that consist of products of operators of the form $B(\lambda)\pm i\lambda$, $(\lambda^2+B(\lambda)^2)^{\pm 1}$ and heat-kernels
$e^{-\frac{t^2}{2}(\lambda^2 +B (\lambda)^2)}$. In particular, the operators all commute with each other since they are obtained by applying 
functional calculus to the operator $B(\lambda)\pm i\lambda$. 

The $\lambda$-derivative of $r(\lambda,t)$ is more involved and introduces the operator $\partial_\lambda B(\lambda)$ which does not necessarily commute with other terms: to compute the
derivative of the heat kernel we use Duhamel's formula
\[
\partial_\lambda r_{11}(t,\lambda)=\partial_\lambda e^{-t^2 (\lambda^2 +B (\lambda)^2)}=\int_0^te^{(t-s)^2 (\lambda^2 +B (\lambda)^2)}
\partial_\lambda(\lambda^2 +B (\lambda)^2)e^{s^2 (\lambda^2 +B (\lambda)^2)}ds.
\]
Also to compute the $\lambda$-derivative of $( \lambda^2 + B (\lambda)^2)^{-1}$ we use the identity
\[
( \lambda^2 + B (\lambda)^2)^{-1}=\int_0^\infty e^{-s( \lambda^2 + B (\lambda)^2)} ds,
\]
which holds true because $\lambda^2 + B (\lambda)^2$ is invertible, and subsequently use Duhamel's formula once again. If we now remark that in the computation the only source of noncommutativity is $\partial_\lambda B(\lambda)$ and since there is 
at most one of them in each term, we can use cyclicity of the trace $\tau_g$ to obtain the identities
\begin{align*}
\tau_g\left(\ldots\partial_\lambda e^{-t^2 (\lambda^2 +B (\lambda)^2)}\ldots\right)&=-2t^2\tau_g\left(\ldots e^{-t^2 (\lambda^2 +B (\lambda)^2)}\ldots(\lambda+B(\lambda)\partial_\lambda B(\lambda))\right),\\
\tau_g\left(\ldots\partial_\lambda (\lambda^2+B(\lambda^2))^k\ldots\right)&=2k\tau_g\left(\ldots (\lambda^2+B(\lambda^2))^{k-1}\ldots(\lambda+B(\lambda)\partial_\lambda B(\lambda))\right),\quad k\in\mathbb{Z},
\end{align*}
where the $\ldots$ denotes terms commuting with $\partial_\lambda B(\lambda)$. We therefore see
that in the computation we can always put the terms with $\partial_\lambda B(\lambda)$ on the right 
hand side in the trace. With this convention, the computation reduces to a straightforward, but tedious, 
computation in which we can assume that all operators commute with each other. This can even be done using e.g. {\em Mathematica} and the final outcome is:
\[
\tau_g\left(\left[\partial_t r(t,\lambda),r(t,\lambda)\right]\partial_\lambda(t,\lambda)\right)=-2it \tau_g\left(e^{-t^2 (\lambda^2 +B (\lambda)^2)}(B(\lambda)-\lambda\partial_\lambda B(\lambda))\right).
\]
This finishes the proof of the claim. 

To finish the proof of the Theorem, we adapt  an argument from \cite[\S 3]{loya}, in turn inspired by 
\cite{Mel-P1}. We have the following analogue of \cite[Lemma 3.10]{loya}:
\begin{lemma}
\label{lemma-lail}
Let $A(r,\lambda)$ be a continuous and rapidly decreasing function on $[0,1]\times\RR$  with values $\mathcal{L}^\infty_{G,s}(X)$. The integrals
\[
\int_\RR\tau_g^X\left(A(r,\lambda)e^{-t^2(\lambda^2+B(r\lambda)^2)}\right)d\lambda,
\qquad 
\int_\RR\tau_g^X\left(B(r\lambda)e^{-t^2(\lambda^2+B(r\lambda)^2)}\right)d\lambda,
\]
converge absolutely and are $\mathcal{O}(t^{-1})$ as $t\downarrow 0$ and vanish exponentially as $t\to \infty$, both uniformly in $r\in [0,1]$.
\end{lemma}
\begin{proof}
The arguments around \eqref{structure}, together with the Volterra expansion for the perturbed heat kernel allow us to apply the same arguments as in \cite{loya} 
once we take into account that $\tau_g^X$ is a continuous trace on $\mathcal{L}^\infty_{G,s}(X)$.
\end{proof}
 Next, we define
\begin{align*}
\zeta(r):=\frac{1}{\pi}\int_0^\infty\zeta(r,t) dt=\frac{1}{\pi}\int_0^\infty\int_{\mathbb{R}}\tau_g\left(e^{-t^2(\lambda^2+B(r\lambda)^2)}t(B(r\lambda)-\lambda\partial_\lambda B(r\lambda))\right)d\lambda dt.
\end{align*}
Remark that $\lambda\partial_\lambda B(r\lambda)$ is rapidly decreasing function of $(r,\lambda)$ with values in 
$\mathcal{L}^\infty_{G,s}(X)$, so the integrals above converge by Lemma \ref{lemma-lail}.
We can write the $t$-integrand as
\[
\zeta(r,t)=\int_{\mathbb{R}}\tau_g\left( L(\tilde{B})e^{-t^2\lambda^2-\tilde{B}^2}\right)d\lambda,
\]
where $\tilde{B}(r,t,\lambda):=tB(r\lambda)$ and $L:=t\partial_t-\lambda\partial_\lambda$. We then compute the $r$-derivative of $\zeta(r,t)$ as
\[
\frac{d\zeta(r,t)}{dr}=\underbrace{\int_{\mathbb{R}}\tau_g\left(L\left(\frac{d\tilde{B}}{dr}\right)e^{-t^2\lambda^2-\tilde{B}^2}\right)d\lambda}_{{\rm (I)}}+
\underbrace{\int_{\mathbb{R}}\tau_g\Bigg( L\left(\tilde{B}\right)\frac{d}{dr}e^{-t^2\lambda^2-\tilde{B}^2}\Bigg)d\lambda}_{\rm (II)}.
\]
In the above formula, we can move the differential inside by dominated convergence: Lemma \ref{lemma-lail} tells us that (I) converges absolutely. For (II) we use Eq. \eqref{eq-II} below together with the fact that $\frac{d\tilde{B}}{dr}\tilde{B}$ and $\tilde{B}\frac{d\tilde{B}}{dr}$ are rapidly decreasing in $\lambda$ in $\mathcal{L}^\infty_{G,s}(X)$ to obtain absolute convergence by the same Lemma.

The first term (I) we work out using an integration by parts: 
\begin{align*}
{\rm (I)}&=\frac{d}{dt}\left[\int_{\mathbb{R}}\tau_g\left(t\frac{d\tilde{B}}{dr}e^{-t^2\lambda^2-\tilde{B}^2}\right)d\lambda\right]-\int_{\mathbb{R}}\tau_g\left(\frac{d\tilde{B}}{dr}Le^{-t^2\lambda^2-\tilde{B}^2}\right)d\lambda\\
&=\frac{d}{dt}\left[\int_{\mathbb{R}}\tau_g\left(t\frac{d\tilde{B}}{dr}e^{-t^2\lambda^2-\tilde{B}^2}\right)d\lambda\right]
+\int_{\mathbb{R}}e^{-t^2\lambda^2}\int_0^1\tau_g\left(\frac{d\tilde{B}}{dr}e^{-(t-s)^2B^2}\left(L(\tilde{B}) \tilde{B}+\tilde{B}L(\tilde{B})\right)e^{s^2B^2}\right)dsd\lambda,
\end{align*}
by Duhamel's formula combined with the fact that $L(e^{-t^2\lambda^2})=0$. We can also write out the second term (II) using Duhamel's formula:
\begin{equation}
\label{eq-II}
{\rm (II)}=-\int_{\mathbb{R}}e^{-t^2\lambda^2}\int_0^1\tau_g\left( L(\tilde{B})e^{-(t-s)^2B^2}\left(\frac{d\tilde{B}}{dr}\tilde{B}+\tilde{B}\frac{d\tilde{B}}{dr}\right)e^{s^2B^2}\right)dsd\lambda.
\end{equation}
After this, putting the two terms (I) and (II) back together, we find that the terms invoking Duhamel's formula combine to a commutator:
\begin{align*}
\int_{\mathbb{R}}e^{-t^2\lambda^2}\int_0^1\tau_g\left[\frac{d\tilde{B}}{dr},e^{-(t-s)^2B^2}\left(L(\tilde{B}) \tilde{B}+\tilde{B}L(\tilde{B})\right)e^{s^2B^2}\right]dsd\lambda=0,
\end{align*}
by the trace property of $\tau_g$. This results in the following formula for the derivative of $\zeta(r,t)$:
\[
\frac{d\zeta(r,t)}{dr}=\frac{d}{dt}\left[\int_{\mathbb{R}}\tau_g\left(t^2\frac{d B(r\lambda)}{dr}e^{-t^2(\lambda^2-B(r\lambda)^2)}\right)d\lambda\right].
\]
With this formula we see that
\begin{align*}
\frac{d\zeta(r)}{dr}&=\frac{1}\pi\int_0^\infty\frac{d\zeta(r,t)}{dr}dt\\
&=\frac{1}{\pi}\int_0^\infty\frac{d}{dt}\left[\int_{\mathbb{R}}\tau_g\left( t^2\frac{d B(r\lambda)}{dr}e^{-t^2(\lambda^2-B(r\lambda)^2)}\right)d\lambda\right]dt=0,
\end{align*}
where we have used Lemma \ref{lemma-lail} in the final identity to evaluate the boundary terms.
It follows that $\zeta(r)$ is constant and therefore $\zeta(0)=\zeta(1)$. But $\zeta(1)$ is exactly $(2\pi i)^{-1}$ times the right hand side of Eq. \eqref{eq-fe}.
On the other hand we find, after a Gaussian integration,
\[
\zeta(0)=\frac{1}{2\sqrt{\pi}}\int_0^\infty e^{-t(D_\partial+C_\partial)^2}(D_\partial+C_\partial)\frac{dt}{\sqrt{t}}.
\]
This finishes the proof of Lemma \ref{lemma-key}.
\end{proof}

\noindent
With the Lemma, the proof of Theorem \ref{theorem-aps-ind-perturbed} is now also complete.
\end{proof}

\section{Secondary invariants of the signature operator on $G$-proper manifolds}\label{sect:numeric}
In our previous work \cite{PPST} we have defined (higher) rho numbers associated to a $G$-invariant positive scalar curvature
metric on a $G$-proper cocompact spin manifold.
In this section we shall introduce  rho numbers associated 
to the signature operator. More precisely, 
using the signature operator and the Hilsum-Skandalis perturbation,  we shall define rho numbers associated  to $G$-equivariant homotopy equivalences and we shall study their properties. Next, 
under an invertibility assumption on the differential form laplacian in middle degree we shall introduce the delocalized 
eta invariant of the signature operator on a G-proper manifold without boundary and we shall prove that it enters into a APS index theorem. These results on the signature operator
are in fact the main motivation for developing the general theory we have illustrated  in the previous sections.
 

\subsection{Non-elliptic elements in $G$}
We begin by recalling a definition from 
\cite{PPST}.
Consider a cocompact proper $G$-manifold $(W,\mathbf{h})$, possibly with boundary, with $G$
a connected linear real reductive group. 
Let $D_W$ be a Dirac operator on $W$, defined by a unitary and Clifford connection.
Let $g\in G$ be semisimple. We shall say that $g$ is {\it geometrically-simple
on $W$} 
if 
$$\int_{W^g} c^g {\rm AS}_g (W,S)=0\, ,$$
with ${\rm AS}_g (W,S)$ the usual Atiyah-Segal integrand appearing as  the local term in a 0-degree delocalized APS-index theorem for $D_W$.

\noindent
As explained in \cite{PPST}, the following proposition provides many examples of geometrically-simple elements $g$ on an arbitrary
$G$-proper manifold $W$.

\begin{proposition}\label{prop:fixed point}If $g$ is non-elliptic, that is, does not conjugate to a compact element, then every element of the conjugacy class $\{hgh^{-1}|h\in G\}$ in $G$ does not have any fixed point on $W$.
\end{proposition}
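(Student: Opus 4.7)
The plan is to argue by contradiction using the properness of the $G$-action. Suppose some conjugate $hgh^{-1}$ has a fixed point $x \in W$. Then $hgh^{-1}$ belongs to the stabilizer $G_x = \{k \in G : kx = x\}$. Since the $G$-action on $W$ is proper, every such stabilizer is compact in $G$. Hence $hgh^{-1}$ is contained in a compact subgroup of $G$, and conjugating by $h^{-1}$ we conclude that $g$ itself lies in a compact subgroup of $G$.

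Next I would invoke the standard structure theory for a connected linear real reductive group $G$: every compact subgroup of $G$ is conjugate into a maximal compact subgroup $K$. Applying this to the compact subgroup containing $g$, one obtains $g' \in K$ with $g$ conjugate to $g'$. By definition this means $g$ is elliptic (``conjugate to a compact element''), contradicting our hypothesis that $g$ is non-elliptic. Consequently no conjugate of $g$ can fix any point of $W$.

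The argument is short and almost entirely formal, so there is no genuinely hard step; the only thing to check carefully is the appeal to the structure theory, namely the fact that every compact subgroup of a connected linear real reductive $G$ is conjugate into $K$. This is classical (for instance, it follows from the Cartan fixed-point theorem applied to the action of the compact subgroup on the non-positively curved symmetric space $G/K$), so I would simply cite it rather than reprove it. Everything else reduces to the definition of proper action (compactness of stabilizers) and the definition of ``non-elliptic'' given in the statement.
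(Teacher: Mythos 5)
Your proof is correct, and the paper itself does not include a proof of this proposition (it refers back to \cite{PPST} for the discussion). The argument you give is the standard one: properness of the action forces each stabilizer $G_x$ to be compact, so a fixed point for $hgh^{-1}$ would place $g$ in a compact subgroup and hence, by conjugacy of compact subgroups into the maximal compact $K$ (e.g.\ via the Cartan fixed-point theorem on $G/K$), make $g$ elliptic, a contradiction. This is exactly the expected reasoning, so there is nothing substantive to add beyond noting that the two claimed facts (compact stabilizers for proper actions; compact subgroups conjugate into $K$) are both classical and correctly invoked.
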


%
\subsection{Stability results for rho numbers}
We consider a closed $G$-proper manifold $X$ {\bf without} boundary, $G$ a connected linear real reductive, $g\in G$ a semisimple element,
$D_X$ a $G$-equivariant Dirac operator. We assume the existence of a perturbation $B \in \mathcal{L}^c_G (X,E)$
with the property that there exists $a>0$ such that
$${\rm spec}_{L^2} (D_X+B)\cap [-2a,2a]=\emptyset\,.$$ 
We consider
$$\eta_g (D_X+B):=\frac{1}{\sqrt{\pi}} \int_0^\infty \tau^{X}_{g} ((D_{X}+B) \exp (-t(D_{X}+B)^2) \frac{dt}{\sqrt{t}}\;;$$
  we have proved  in Proposition \ref{prop:eta-conv-perturbed}  that 
this integral
is well defined.

\noindent
We shall need stability results regarding this number. 

\begin{proposition}\label{prop:stability}
Let $[0,1]\ni r \to B(r) \in \mathcal{L}^c_G (X,E)$ a $C^1$-map with the property that there exists $a>0$ such that
$${\rm spec}_{L^2} (D_X+B (r))\cap [-2a,2a]=\emptyset\quad\forall r\in[0,1]$$ 
We assume that $B(r)$ is constant for $r\in [0,1/4] \cup [3/4,1]$.
If $g$ is non-elliptic, then 
$$\eta_g (D_X+B (0))=\eta_g (D_X+B (1))\,.$$
\end{proposition}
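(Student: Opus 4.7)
The plan is to prove that $r\mapsto \eta_g(D_X+B(r))$ is $C^1$ on $[0,1]$ with identically vanishing derivative, via a direct variational argument. The calculation reduces to checking that a boundary term in the heat parameter $t$ vanishes at both $t=0$ and $t=\infty$, which is ensured by the two heat-kernel results established in Section~\ref{section:heat}. It is worth remarking that the variational step itself works for any semisimple $g\in G$; the non-ellipticity hypothesis is not strictly needed here, but matches the assumption under which this result will be applied later via Proposition~\ref{prop:fixed point}.

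Set $D(r):=D_X+B(r)$, $H(r):=D(r)^2$ and $\dot B(r):=\tfrac{d}{dr}B(r)$, which lies in $\mathcal{L}^c_G(X,E)$ by the $C^1$ hypothesis. Duhamel's formula together with the product rule gives
\[
\frac{d}{dr}\bigl(D(r)e^{-tH(r)}\bigr)=\dot B(r)e^{-tH(r)}-t\int_0^1 D(r)e^{-stH(r)}\bigl(D(r)\dot B(r)+\dot B(r)D(r)\bigr)e^{-(1-s)tH(r)}\,ds.
\]
Applying $\tau_g^X$, using its trace property and the fact that $D(r)$ commutes with $e^{-\sigma H(r)}$ by functional calculus, each of the two summands inside the $s$-integrand collapses to $\tau_g^X(\dot B(r)D(r)^2 e^{-tH(r)})$. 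Thus the $s$-integral evaluates to $2\tau_g^X(\dot B(r)D(r)^2 e^{-tH(r)})$, and upon using $D(r)^2 e^{-tH(r)}=-\partial_t e^{-tH(r)}$ together with $f(t):=\tau_g^X(\dot B(r)e^{-tH(r)})$ one obtains
\[
\tau_g^X\!\left(\frac{d}{dr}\bigl(D(r)e^{-tH(r)}\bigr)\right)=f(t)+2tf'(t).
\]
Recognizing $\bigl(f(t)+2tf'(t)\bigr)/\sqrt{t}=\frac{d}{dt}\bigl(2\sqrt{t}\,f(t)\bigr)$, this yields
\[
\frac{d}{dr}\eta_g(D(r))=\frac{1}{\sqrt{\pi}}\int_0^\infty\frac{d}{dt}\bigl(2\sqrt{t}\,f(t)\bigr)\,dt=\frac{2}{\sqrt{\pi}}\bigl[\sqrt{t}\,f(t)\bigr]_{t=0}^{t=\infty}.
\]

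Both boundary limits vanish. As $t\to\infty$, the uniform spectral gap $\mathrm{spec}_{L^2}(D(r))\cap[-2a,2a]=\emptyset$ forces $e^{-tH(r)}\to 0$ exponentially in $\mathcal{L}^\infty_{G,s}(X,E)$ by (the uniform-in-$r$ version of) Proposition~\ref{prop:large-no-boundary-unperturbed}, and continuity of multiplication by $\dot B(r)\in\mathcal{L}^c_G$ together with continuity of $\tau_g^X$ gives $f(t)\to 0$ exponentially, so $\sqrt{t}\,f(t)\to 0$. As $t\downarrow 0$, Proposition~\ref{prop:small-time-heat} gives $\dot B(r)e^{-tH(r)}\to\dot B(r)$ in $\mathcal{L}^\infty_{G,s}(X,E)$, so $f(t)\to\tau_g^X(\dot B(r))$ stays bounded, whence $\sqrt{t}\,f(t)\to 0$. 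Together these show $\tfrac{d}{dr}\eta_g(D(r))\equiv 0$ on $[0,1]$, and the claim follows by integrating in $r$. The main technical subtlety is the initial interchange of $\tfrac{d}{dr}$ with $\int_0^\infty\,dt$, which requires uniform-in-$r$ Fr\'echet bounds on the difference quotient of the integrand; these follow from the $C^1$ regularity of $B(\cdot)$, the uniform spectral gap, and the Volterra/Bochner estimates already developed in Section~\ref{section:heat}. Once this interchange is justified, the remaining manipulations are formal and the essential work is precisely the $t=0,\infty$ boundary vanishing above.
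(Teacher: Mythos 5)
Your proof is correct, but it takes a genuinely different route from the one the paper actually uses. The paper proves this proposition by a cobordism argument: the family $\{D_X+B(r)\}$ is suspended to a $b$-operator $P_0 = x(1-x)\partial_x + D_X + B(r)$ on the $b$-cylinder $[0,1]\times X$; since the family is invertible for every $r$, the index class of $P_0$ vanishes (this is the content of Wahl's noncommutative-spectral-flow lemma, cited as Lemma 3.13(4) in \cite{Wahl-Ramanujan}); then Theorem \ref{theorem-aps-ind-perturbed}, applied to this cylinder, yields $0 = \eta_g(D_X+B(0)) - \eta_g(D_X+B(1)) + \int_{([0,1]\times X)^g} c^g\,{\rm AS}_g$, and the local integral dies because a non-elliptic $g$ has empty fixed-point set (Proposition \ref{prop:fixed point}). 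Your direct variational computation — differentiate the eta integrand in $r$ using Duhamel, collapse the two Duhamel summands by cyclicity of $\tau_g^X$ and functional-calculus commutativity of $D(r)$ with $e^{-\sigma H(r)}$, and recognize the result as a total $t$-derivative whose boundary terms at $t=0$ and $t=\infty$ vanish by Propositions \ref{prop:small-time-heat} and \ref{prop:large-no-boundary-unperturbed} respectively — is algebraically clean and bypasses the APS machinery entirely. You are also right that this calculation uses neither the non-ellipticity of $g$ nor the hypothesis that $B(r)$ is constant near $r=0,1$: both are artifacts of the cobordism approach (the constancy is needed so $P_0$ is a genuine $b$-operator with product indicial structure at both ends, and non-ellipticity kills the ${\rm AS}_g$ term). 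What the paper's approach buys, however, is consistency with the framework that is going to be reused immediately afterwards — the same ``index zero on a cylinder/cobordism plus APS'' pattern reappears in Proposition \ref{prop:independence-rho} and the $h$-cobordism invariance of Subsection \ref{subsect:h-cobordism}, where a direct variational argument is no longer available because the perturbation on the bordism $W$ is a Hilsum-Skandalis perturbation and not a path of invertible operators on a fixed manifold. The one place where you should be more careful is the interchange of $\frac{d}{dr}$ with $\int_0^\infty dt$, which you flag yourself: to justify it you do need a uniform-in-$r$ bound showing that the difference quotient of $\tau_g^X(D(r)e^{-tH(r)})/\sqrt{t}$ is dominated by an integrable function on $(0,\infty)$; near $t=0$ this follows from the fact that $\dot B(r)D(r)^2$ still lies in $\mathcal{L}^c_G$ (so Proposition \ref{prop:small-time-heat} applies to it as well, making $tf'(t)$ bounded near $t=0$), and near $t=\infty$ from the uniform spectral gap, but these estimates deserve a sentence rather than a gesture.
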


\begin{proof}
Consider the cylinder $\RR_r\times X$ endowed with the lifted action and the product metric.
Alternatively, we compactify the cylinder to a compact $b$-cylinder $[0,1]_x\times X$ with the lifted action
and the product $b$-metric.
The family 
$\{D_X+B (r)\}_{r\in [0,1]}$ defines an operator  
$$P:= \partial_r + D_X+B (r)\quad \text{on} \quad \RR_r\times X,$$ where we extend the perturbations
to be constant for $r<0$ and $r>1$. Notice that $P$ is a perturbation of $D:= \partial_r + D_X$.
Similarly, the family 
$\{D_X+B (r)\}_{r\in [0,1]}$ defines a $b$-operator $P_0$ on $[0,1]\times X$
and  $P_0$ is a perturbation of the $b$-operator
associated to $D$, call it $D_0$, given explicitly by $D_0=x(1-x) \partial_x +D_X$. Using the proof of  [Lemma 3.13 (4)] in \cite{Wahl-Ramanujan} we have that the index class of the operator $P_0$ is equal to zero; indeed there exists an homotopy between $P_0$ and an invertible operator.

\noindent
We conclude the proof by applying  our main theorem:
$$0= \eta_g (D_X+B (0))- \eta_g (D_X+B (1)) + \int_{([0,1]\times X)^g}  AS_g (D_0) $$
where on the left hand side we have the pairing of the $b$-index class on $I\times X$, which is 0, with the
0-cocycle $\tau^X_g$; indeed, as $g$ is non-elliptic the 
integral on the right hand side vanishes and the proof is complete. 
\end{proof}

\subsection{Rho numbers associated to a $G$-homotopy equivalence}

\medskip
We now come to the signature operator and to the rho-numbers associated to a G-homotopy equivalence.\\
Consider $\mathbf{f}:X_1\to X_2$,  an oriented $G$-homotopy equivalence between two cocompact oriented
$G$-proper manifolds. It is not difficult to show, see \cite{Fukumoto}, that $\mathbf{f}$ is automatically a proper map. Consider $X:= X_1\sqcup (-X_2)$ (where $-X_2$ is the same manifold as $X_2$ with the opposite orientation). Note that $X$ is equipped with a proper $G$ action with the compact quotient $X_1/G\sqcup (-X_2/G)$. We follow the construction in \cite[Section 2]{HigsonPedersenRoe} for a coarse structure on $X$ by taking the entourages of the form
\[
E=\bigcup_{g\in G} gC\times gC
\]
where $C$ runs over all compact subsets of $X$. The Roe algebra $C^*(X, \Lambda^* X)^G$ can be defined \cite[Definition 5.3]{HigsonPedersenRoe} as the completion of locally compact finite propagation operators on $X$. With the definitions, it is not hard to verify that $\mathcal{L}^c_G(X, \Lambda^* X)$ is dense in $C^* (X,\Lambda^* X)^G$.


We go back to the definition of the rho invariant associated to $\mathbf{f}$.
 Fukumoto, see \cite{Fukumoto}, has extended the Hilsum-Skandalis proof \cite{HS} of the homotopy invariance of the signature index class
 from the case of  free actions to the case of proper actions. 
Recall that the Hilsum-Skandalis result was sharpened in \cite{PiazzaSchick_BCrho}
 where it was proved, building heavily on  the work of Hilsum-Skandalis, that there exists a  perturbation 
  $A_\mathbf{f}$ of the signature operator on $X= X_1\sqcup (-X_2)$  with the property that
$D^{{\rm sign}}_X+A_\mathbf{f}$ has  the same domain of $D^{{\rm sign}}_X$
 and it is $L^2$-invertible. Further contributions on the structure of this perturbation were then given by Wahl
 \cite{Wahl}, Zenobi \cite{zenobi-JTA} and, above all,
Spessato \cite{spessato}. For the present article it is important to remark that the work of Spessato is 
done entirely in the framework
of Roe algebras; moreover his arguments can be easily extended to the proper case. 
We obtain in this way a perturbation $A_{\mathbf{f}}\in C^* (X,\Lambda^* X)^G$ such that $D^{{\rm sign}}_X+A_\mathbf{f}$
is $L^2$-invertible.
By density, we conclude that there exists a perturbation 
$B_{\mathbf{f}}\in \mathcal{L}^c_G (X,\Lambda^* X)$
such that $D^{{\rm sign}}_X+B_{\mathbf {f}}$ is $L^2$-invertible.
We can thus define the rho-number of the homotopy equivalence $\mathbf {f}$ as
\begin{equation}\label{rho-f}
\rho_g (\mathbf{f}):= \eta_g  (D^{{\rm sign}}_X+B_\mathbf{f})\,.
\end{equation}
%
%
%
The number $\rho_g(\mathbf{f})$ does not depend on the choices that enter 
into the definition of the perturbation. Indeed, proceeding as in \cite[Section 4, page 165]{Wahl}, we know that the two perturbations,
 $A_\mathbf{f}$ and $\widetilde{A}_\mathbf{f}$, associated to  two different sets of choices for the Hilsum-Skandalis 
 perturbation for $\mathbf{f}$
can be joined by a path of invertible perturbations.  This family can be uniformly approximated by a family with values 
in $\mathcal{L}^c_G (X,\Lambda^* X)$.
By applying Proposition \ref{prop:stability} we then obtain that $\rho_g (\mathbf{f})$ is independent of the choices that enter into the definition of the Hilsum-Skandalis perturbation and also on the choice of the element
$B_{\mathbf{f}}\in \mathcal{L}^c_G (X,\Lambda^* X)$ that approximates $A_{\mathbf{f}}\in
C^* (X,\Lambda^* X)^G$.\\
However,  the signature operator also depends on the choice of the metric; if $h_r$, $r\in [0,1]$, are metrics 
on  $X:= X_1\sqcup (-X_2)$, with $h_r$ constant in $[0,1/4]$ and $[3/4,1]$,  then we can consider the family 
$$D^{{\rm sign}}_{h_r} + A_\mathbf{f} (r).$$
(The Hilsum-Skandalis perturbation depends on the operator $D^{{\rm sign}}_{h_r}$ and for this
reason it will change with $r$, even if the homotopy equivalence is fixed.)
Given two approximations of $A_\mathbf{f} (0)$ and $A_\mathbf{f} (1)$, call them 
$B_\mathbf{f} (0)$ and $B_\mathbf{f} (1)$, we can consider a uniform family of approximations 
$B_\mathbf{f} (r)$ for $A_\mathbf{f} (r).$
Proceeding similarly to Proposition \ref{prop:stability} we can consider the operator
$P:= \partial_r + D^{{\rm sign}}_{h_r} + B_\mathbf{f} (r)$ on the cylinder $\RR\times X$
endowed with a metric $H$ induced by $h_r$ on $[0,1]\times X\subset \RR\times X$ and extended constantly 
on $r<0$ and $r>1$. Similarly, we can consider the associated $b$-operator $P_0$. \\
Since the family $\{D^{{\rm sign}}_{h_r} + B_\mathbf{f} (r)\}_{r\in [0,1]}$ is invertible, it follows again
from the proof of  Lemma 3.13 (4) in \cite{Wahl-Ramanujan}
that  the  index class of $P_0$ is equal to 0.\\
We now apply again our index theorem and get, 
$$0= \eta_g (D^{{\rm sign}}_{g_0} +B_\mathbf{f} (0))- 
\eta_g (D^{{\rm sign}}_{g_1} +B_\mathbf{f} (1))
 + \int_{([0,1]\times X)^g}  AS_g (H). $$
 Choosing  $g\in G$ to be a non-elliptic element we obtain, summarizing, the following:

 \begin{proposition}\label{prop:independence-rho} Let $g\in G$ be a non-elliptic element and let $\mathbf{f}:X_1\to X_2$ be  an oriented $G$-homotopy equivalence between two cocompact oriented
$G$-proper manifolds. Let $X=X_1\sqcup -X_2$, endowed with a metric $h:=h_1\sqcup h_2$. Then 
 the rho-number $\rho_g (\mathbf{f}):= \eta_g  (D^{{\rm sign}}_h+B_\mathbf{f})$ is well defined,
 independent of the choice of the metric and of the choices  that enter into the definition of the Hilsum-Skandalis perturbation $A_\mathbf{f}$ and its approximation $B_\mathbf{f}$.
 \end{proposition}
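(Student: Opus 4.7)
The plan is to split the statement into three independence assertions and handle each by reducing to the index formula of Theorem \ref{theorem-aps-ind-perturbed} together with the stability Proposition \ref{prop:stability}. The three things to verify, in order, are: (a) independence of the smoothing approximation $B_\mathbf{f}\in\mathcal{L}^c_G(X,\Lambda^\ast X)$ of a fixed $A_\mathbf{f}\in C^\ast(X,\Lambda^\ast X)^G$; (b) independence of the Hilsum--Skandalis perturbation $A_\mathbf{f}$ itself; (c) independence of the metric $h=h_1\sqcup h_2$.

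For (a), given two approximations $B_\mathbf{f}$, $B'_\mathbf{f}$ of the same $A_\mathbf{f}$, I would consider the linear path $B(r):=(1-r)B_\mathbf{f}+rB'_\mathbf{f}$, extended constantly on $[0,1/4]$ and $[3/4,1]$. Since $D^{\mathrm{sign}}_h+A_\mathbf{f}$ is $L^2$-invertible with a definite spectral gap, choosing the approximations close enough in operator norm keeps the whole family uniformly invertible with a common gap $[-2a,2a]$. Then Proposition \ref{prop:stability} (which requires $g$ non-elliptic) gives $\eta_g(D^{\mathrm{sign}}_h+B_\mathbf{f})=\eta_g(D^{\mathrm{sign}}_h+B'_\mathbf{f})$. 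For (b), the preceding paragraph recalls that by \cite[\S4]{Wahl} any two Hilsum--Skandalis perturbations $A_\mathbf{f}$ and $\widetilde A_\mathbf{f}$ are joined by a path of invertible perturbations in the Roe algebra; I would uniformly approximate this path by a $C^1$ path in $\mathcal{L}^c_G(X,\Lambda^\ast X)$ with common spectral gap, reduce it to a path which is constant near the endpoints, and apply Proposition \ref{prop:stability} again.

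For (c), the metric case, I would reuse the cylinder argument already sketched above the statement. Pick smooth $G$-invariant metrics $h_r$ on $X$, $r\in[0,1]$, with $h_r\equiv h_0$ on $[0,1/4]$ and $h_r\equiv h_1$ on $[3/4,1]$, together with a $C^1$ family $B_\mathbf{f}(r)\in\mathcal{L}^c_G(X,\Lambda^\ast X)$ approximating a path of Hilsum--Skandalis perturbations adapted to $h_r$, all with a common spectral gap. Form the cocompact $G$-proper $b$-manifold $W_0=[0,1]\times X$ with the $b$-product metric induced by $h_r$, and consider the $b$-Dirac operator $P_0:=\partial_r+D^{\mathrm{sign}}_{h_r}+B_\mathbf{f}(r)$; the family $\{D^{\mathrm{sign}}_{h_r}+B_\mathbf{f}(r)\}_{r\in[0,1]}$ is uniformly $L^2$-invertible, so by the argument of \cite[Lemma 3.13(4)]{Wahl-Ramanujan} the smooth index class $\Ind(P_0,B_\partial)\in K_0(\mathcal{L}^\infty_{G,s}(W))$ vanishes. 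Pairing with $\tau^W_g$ and invoking Theorem \ref{theorem-aps-ind-perturbed} yields
\[
0=\int_{(W_0)^g}c^g\,\mathrm{AS}_g(P_0)-\tfrac12\bigl(\eta_g(D^{\mathrm{sign}}_{h_0}+B_\mathbf{f}(0))-\eta_g(D^{\mathrm{sign}}_{h_1}+B_\mathbf{f}(1))\bigr),
\]
where the relative sign between the boundary components accounts for orientation. Because $g$ is non-elliptic, Proposition \ref{prop:fixed point} tells us $(W_0)^g=\emptyset$, so the Atiyah--Segal integral vanishes and the two eta invariants coincide. Combining with (a) and (b) gives independence from all choices.

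The main obstacle I anticipate is the joint control of the approximating path $B_\mathbf{f}(r)$: one needs simultaneously a $C^1$ $r$-dependence in $\mathcal{L}^c_G(X,\Lambda^\ast X)$, uniform proximity to the Roe-algebra path of Hilsum--Skandalis perturbations, and a uniform spectral gap. This is what guarantees both that $P_0$ fits into the framework of Section \ref{sect:aps-perturbed} (so its boundary operator is invertible and the perturbation has the indicial-family structure required by Propositions \ref{prop:large-yes-boundary-perturbed}--\ref{prop:large-yes-boundary-perturbed-bis}) and that the hypotheses of Proposition \ref{prop:stability} are satisfied. Once this simultaneous choice is made—using the density of $\mathcal{L}^c_G$ in the Roe algebra and the openness of the invertibility condition—the rest is a direct application of the delocalized APS index theorem together with the non-ellipticity of $g$.
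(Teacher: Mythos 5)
Your proposal follows the paper's own proof almost verbatim: it joins approximations and Hilsum--Skandalis perturbations via Proposition \ref{prop:stability}, and handles metric variation by the cylinder/$b$-manifold argument, the vanishing of the index class from \cite[Lemma 3.13(4)]{Wahl-Ramanujan}, Theorem \ref{theorem-aps-ind-perturbed}, and non-ellipticity of $g$. Your extra attention to the simultaneous control of the approximating path (joint $C^1$ dependence in $\mathcal{L}^c_G$, uniform spectral gap, and compatibility with the indicial-family structure required by Propositions \ref{prop:large-yes-boundary-perturbed}--\ref{prop:large-yes-boundary-perturbed-bis}) makes explicit a technical point that the paper treats more cursorily.
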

 

\subsection{Bordism properties}\label{subsect:h-cobordism}
The delocalized APS index theorem proved in this article can be used in order to study the bordism properties 
of these rho invariants $\rho_g (\mathbf{f})$, as we shall now explain. 

\medskip
Let $Y$ be a cocompact  orientable $G$-proper manifold. A $G$-structure over $Y$ is a cocompact orientable $G$-proper manifold $M$ together with an oriented
$G$-homotopy equivalence $M\xrightarrow{\mathbf{f}} Y$. Two $G$-structures over $Y$,  $(M_1\xrightarrow{\mathbf{f}_1} Y)$ and    $(M_2\xrightarrow{\mathbf{f}_2}  Y)$,
are $G$-h-cobordant  if there exists an orientable  $G$-proper manifold with boundary $W$ and an oriented  $G$-homotopy equivalence $\mathbf{F}: W\to Y\times [0,1]$ 
restricting to $(M_1\xrightarrow{\mathbf{f}_1} Y)$ and  $(M_2\xrightarrow{\mathbf{f}_2}  Y)$
over  the boundary. We can and we shall assume that $F$ is equal to ${\mathbf{f}_j} \otimes {\rm Id}$ on the collar neighbourhoods of the two boundary components.

Consider such a $G$-h-cobordism $\mathbf{F}: W\to Y\times [0,1]$.
By combining the work of Fukumoto \cite{Fukumoto}, Wahl \cite{Wahl}  and Spessato \cite{spessato} it is possible to prove that  there exists a well-defined signature index class
associated to $X:=W\sqcup - (Y\times [0,1])$ and $\mathbf{F}$ (for this it would suffice that $\mathbf{F}|_{\partial}$ is a $G$-homotopy equivalence)
and that, moreover, this index class is equal to zero (this is because $\mathbf{F}$ is a global  $G$-homotopy equivalence). This vanishing when $\mathbf{F}$ is a global  $G$-homotopy equivalence
is the consequence of the existence of a global perturbation $A(\mathbf{F})$ making the signature operator on $X$
invertible and giving a 
 a suitable extension of the perturbations $A_{\mathbf {f}_1}\otimes {\rm Id}$ and  $A_{\mathbf {f}_2}\otimes {\rm Id}$
 on  the collar neighbourhood of $\partial X$. Let $B_{\mathbf {f}_1}$ and  $B_{\mathbf {f}_2}$ be two approximations  of $A_{\mathbf {f}_1}$ and  $A_{\mathbf {f}_2}$ respectively. Set
 $Y_j:= M_j \sqcup (-Y)$.
 An explicit homotopy as in Wahl, together  with Remark \ref{comparison-wahl}, allows to prove that
 $$\Ind (D^{{\rm sign},+}_X+A(\mathbf{F}))= \Ind ((D^{{\rm sign},+}_X, B_{\mathbf {f}_1}\sqcup B_{\mathbf {f}_2})\,.$$
 Since the left hand side  is 0 we can apply our index theorem to the right hand side and obtain
 $$0=\eta_g  (D^{{\rm sign}}_{Y_1} +B_{\mathbf{f}_1}) - \eta_g  (D^{{\rm sign}}_{Y_2} +B_{\mathbf{f_2}}) + \int_{W^g} c^g AS_g (
 D^{{\rm sign}}_W).$$
 By choosing $g$ non-elliptic we conclude that the following Proposition holds

\begin{proposition}
If two $G$-structures over $Y$,  $(M_1\xrightarrow{\mathbf{f}_1} Y)$ and    $(M_2\xrightarrow{\mathbf{f}_2}  Y)$,
are $G$-h-cobordant and $g\in G$ is a non-elliptic element, then 
  \begin{equation}\label{rho-bord}
  \rho_g (\mathbf{f}_1)=\rho_g (\mathbf{f}_2).
  \end{equation}
  \end{proposition}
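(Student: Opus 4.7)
The plan is to deduce the equality \eqref{rho-bord} as a direct application of the delocalized APS index theorem (Theorem \ref{theorem-aps-ind-perturbed}) applied to the signature operator on the $b$-manifold associated to $X := W \sqcup -(Y\times[0,1])$, together with the vanishing of the signature index class attached to a global $G$-homotopy equivalence. The set-up has essentially been laid out in the discussion preceding the statement, so the proof is really a matter of assembling the pieces in the correct order.

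First, I would fix the $G$-h-cobordism $\mathbf{F}: W \to Y\times [0,1]$ and consider the cocompact $G$-proper manifold with boundary $X_0 := W \sqcup -(Y\times[0,1])$, whose boundary is $Y_1\sqcup Y_2$ with $Y_j := M_j\sqcup(-Y)$. Endow $X_0$ with a $G$-invariant metric of product type near the boundary and pass to the associated $b$-manifold $X$. Following Fukumoto \cite{Fukumoto}, Wahl \cite{Wahl} and Spessato \cite{spessato}, the fact that $\mathbf{F}$ is a global $G$-homotopy equivalence produces a perturbation $A(\mathbf{F})$ (lying in the Roe algebra $C^*(X_0\subset X,\Lambda^*X)^G$) of $D^{{\rm sign}}_X$ which is $L^2$-invertible and which extends, on the collar of $\partial X$, the Hilsum-Skandalis perturbations $A_{\mathbf{f}_1}$ and $A_{\mathbf{f}_2}$ of the boundary signature operator; moreover the resulting index class
$$\Ind(D^{{\rm sign},+}_X + A(\mathbf{F})) \in K_0(C^*(X_0\subset X,\Lambda^*X)^G)$$
vanishes, precisely because $A(\mathbf{F})$ is globally defined and invertible on all of $X$.

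Next, I would approximate $A_{\mathbf{f}_1}$ and $A_{\mathbf{f}_2}$ by elements $B_{\mathbf{f}_1}, B_{\mathbf{f}_2}\in \mathcal{L}^c_G(\partial X,\Lambda^*)$, which is possible by density together with the independence results established in Proposition \ref{prop:independence-rho}. By Remark \ref{comparison-wahl} and the cylindrical/$b$-homotopy argument of Wahl, the $b$-index class obtained by lifting $B_{\mathbf{f}_1}\sqcup B_{\mathbf{f}_2}$ to a $b$-perturbation (as in the construction leading up to Theorem \ref{theorem-aps-ind-perturbed}) coincides with the class $\Ind(D^{{\rm sign},+}_X + A(\mathbf{F}))$, namely
$$\Ind(D^{{\rm sign}}_X, B_{\mathbf{f}_1}\sqcup B_{\mathbf{f}_2}) = \Ind(D^{{\rm sign},+}_X + A(\mathbf{F})) = 0 \quad\text{in } K_0(\mathcal{L}^\infty_{G,s}(X)).$$
Applying $\langle [\tau^X_g],\cdot\rangle$ to this vanishing class and invoking the delocalized APS formula \eqref{main-perturbed}, together with the sign conventions coming from the orientation reversal on $-(Y\times[0,1])$, we obtain
$$0 \;=\; \int_{(X_0)^g} c^g \, {\rm AS}_g(D^{{\rm sign}}_{X_0}) \;-\; \tfrac{1}{2}\eta_g(D^{{\rm sign}}_{Y_1}+B_{\mathbf{f}_1}) \;+\; \tfrac{1}{2}\eta_g(D^{{\rm sign}}_{Y_2}+B_{\mathbf{f}_2}).$$

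Finally, since $g$ is non-elliptic, Proposition \ref{prop:fixed point} tells us that the fixed-point set $(X_0)^g$ is empty, so the Atiyah-Segal integral vanishes. Recalling the definition \eqref{rho-f} of $\rho_g$, this yields $\rho_g(\mathbf{f}_1) = \rho_g(\mathbf{f}_2)$. The main obstacle in this plan is really the identification of the two index classes $\Ind(D^{{\rm sign},+}_X + A(\mathbf{F}))$ and $\Ind(D^{{\rm sign}}_X, B_{\mathbf{f}_1}\sqcup B_{\mathbf{f}_2})$: one class is defined from a globally invertible perturbation in the Roe algebra, the other from a $b$-calculus perturbation supported near the boundary, and one must justify that the concordance between them (carried out on coverings by Wahl) adapts to the $G$-proper setting via Remark \ref{comparison-wahl}. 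Once this comparison is in hand, the rest is a clean application of the index theorem and the non-ellipticity of $g$.
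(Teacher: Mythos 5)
Your proposal is correct and follows the same route as the paper: form $X_0=W\sqcup -(Y\times[0,1])$, use Fukumoto--Wahl--Spessato to obtain the globally invertible perturbation $A(\mathbf{F})$ and conclude the index class vanishes, identify this class with $\Ind(D^{\rm sign}_X,B_{\mathbf{f}_1}\sqcup B_{\mathbf{f}_2})$ via Wahl's homotopy and Remark~\ref{comparison-wahl}, apply Theorem~\ref{theorem-aps-ind-perturbed}, and kill the local term by non-ellipticity of $g$. The only cosmetic difference is the presentation of the signs and the factor $\tfrac12$ in the final formula, which does not affect the conclusion.
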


\begin{remark} It is unclear whether such a result, the $G$-h-cobordism invariance of the  rho number in \eqref{rho-f},
 can have immediate   geometric applications, as in the case of free proper $\Gamma$-manifolds,
with $\Gamma$-discrete. 
See for example \cite{ChangWeinberger}, \cite{HigsonRoe3}, \cite{PiazzaSchick_sig}, \cite{WXY}.
 In that case such a result
can be used in order to define invariants on the structure sets $\mathcal{S}(M)$ ($M$ an orientable smooth compact manifold) and give sufficient conditions in order to show that $| \mathcal{S}(M) | = \infty$.
In order 
to have such geometric applications in the present context would seem to require the development 
of a {\em proper $G$-equivariant} surgery sequence for a {\it non-compact} Lie group $G$, certainly a non trivial task.
See \cite{Do-S} for the case in which $G$ is compact. 
We leave this fundamental question to future research.\\
\end{remark}

\subsection{Delocalized eta invariants of the signature operator}

In the previous subsection, {\em using the signature operator},
 we have defined the rho invariant associated to a non-elliptic element $g\in G$ and to an oriented  $G$-homotopy
 equivalence $\mathbf{f}: M\to Y$ between {\em two} cocompact $G$-proper manifolds; moreover, we were able to study the  bordism properties of such an invariant. 
 
 Under additional assumptions we can in fact define $\eta_g (D^{{\rm sign}}_{\mathbf{h}})$
 for a {\it single} cocompact $G$-proper manifold $X$ of dimension  $2m-1$ endowed with a $G$-invariant 
 riemannian metric $\mathbf{h}$. Moreover we can show that $\eta_g (D^{{\rm sign}}_{\mathbf{h}})$ enters as a boundary correction term
 in a suitable APS index theorem on $Y$ with $\partial Y=X$.  The case of Galois coverings has been treated in  
 \cite{LP-AGAG}, \cite{LLP},
 \cite{Wahl-product}. We now proceed to give the details of this program.\\
  
  Recall that $\dim X=2m-1$. We shall make the following assumption.

\begin{assumption}\label{assume-middle}  the differential form Laplacian in degree $m$, $\Delta^{[m]}_h$, is $L^2$-invertible. Put it differently, the space of $L^2$-harmonic forms of degree $m$, $\mathcal{H}^m_{(2)} (X,\mathbf{h})$ is trivial.
\end{assumption}
  
\begin{remark} In the case of Galois coverings we 
know, by Dodziuk theorem \cite{dodziuk}, that this assumption is metric independent (in fact, it is a homotopy-invariant condition because of the well-known results of Gromov and Shubin on the Novikov-Shubin invariants 
\cite{gromov-shubin}). In this more general context we
do not know whether this metric independence  is true. 
\end{remark}

\noindent
Under our assumption we have an $L^2$-orthogonal decomposition
$$\Omega_{L^2} (X)= V_{X} \oplus W_{X}$$
(thus invariant under the induced action of $G$) with the property that 
\begin{itemize}
\item
$D_{\mathbf{h}}\equiv D^{{\rm sign}}_{X,\mathbf{h}}$  and the Hodge involution $\tau^\mathbf{h}_{X}$ are both diagonal with respect to this decomposition (we of course take the intersection of the domain of $D_{\mathbf{h}}$
with $V_{X}$ and $ W_{X}$ respectively);
\item $D_{\mathbf{h}}$ restricted to $V_{X}$ admits a bounded $G$-equivariant inverse;
\item there exists a bounded
$G$-equivariant operator 
$\mathcal{I}$ on $\Omega_{L^2} (X)$ which vanishes on $V_{X}$, is an involution on 
$W_{X}$ and anticommutes there with  (the restrictions to $W_X$ of) $\tau^h_{X}$ and $D_{\mathbf{h}}$.\end{itemize}
Indeed
\begin{itemize}
\item we can  choose $V_{X}=\overline{d\Omega^{m-1}_{L^2} (X)}\oplus \overline{d^*\Omega^{m}_{L^2} (X)}$ where  $d$ and $d^*$ are the closure of $d$ and $d^*$ respectively and they are  defined on their
respective domains (as we are  in the complete case, these operators are essentially closed);
\item  $W_{X} = (V_{X})^\perp$ and we define $\mathcal{I}$  
on $W_X$ by considering  $\Omega^{<}$ and $\Omega^{>}$
as the
subspaces of $W_{X} $ made of forms of degree $\leq k-1$ and $\geq k$ respectively
and declaring $\mathcal{I}$ to be $- {\rm Id} $ on $\Omega^{<}$  and ${\rm Id}$ on 
$\Omega^{>}$; we define $\mathcal{I}$ to be zero on $V_X$.
\end{itemize}
%
%
Consider the 0-th order differential operator defined by Wahl in \cite{Wahl-product}:
$$A^{\mathbf{h}}_{{\rm 0}}:= i\mathcal{I}\tau^{\mathbf{h}}_{X}.$$ This is a 0-th order $G$-equivariant trivializing perturbation 
for $D_{\mathbf{h}}$, which is diagonal with respect to  $V_{X} \oplus W_{X}$, equal to 0 on $V_{X}$
 and $\ZZ_2$-graded odd with respect to the grading defined by  $\mathcal{I}$ on $W_{X}$. 
 Wahl calls the operator $A^{\mathbf{h}}_{{\rm 0}}$ the {\it canonical} perturbation associated to $\mathbf{h}$ 
 under Assumption \ref{assume-middle}.
 
 \noindent
 For $R>>0$ we fix a smooth function $\phi_R\in \C^\infty (\mathbb{R}, [0,1])$  such that
 $\phi_R (x) =1 $ for $|x|<R$ and $\phi_R (x) =0 $ for $|x|> 2R$ 
 
 \smallskip
  \noindent
 {\bf Claim.} {\it The operator $A^{\mathbf{h}}:= \phi_R (D_{\mathbf{h}}) \circ A^{\mathbf{h}}_{{\rm 0}} \circ \phi_R (D_{\mathbf{h}})$ is in $C^* (X,E)^G$, 
  is diagonal with respect to  $V_{X} \oplus W_{X}$, is equal to 0 on $V_{X}$,
 is $\ZZ_2$-graded odd with respect to the grading defined by  $\mathcal{I}$ on $W_{X}$
 and  is a trivializing operator for $D$.}

 \smallskip
  \noindent
  {\em Proof.} Notice that $\phi_R (D_{\mathbf{h}})$ is diagonal with respect to $V_{X} \oplus W_{X}$ since $D_{\mathbf{h}}$ is. 
  Moreover, it is a classic result that $\phi_R (D_{\mathbf{h}})$  is an element in $C^* (X,E)^G$. Since $A^{\mathbf{h}}_{{\rm 0}}$
  is  in $D^* (X,E)^G$ and $C^* (X,E)^G$ is an ideal in $D^* (X,E)^G$, it follows that 
  $A^{\mathbf{h}}$ is in $C^* (X,E)^G$. The other properties follow from the properties of $A^{\mathbf{h}}_{{\rm 0}}$ once we
  choose $R$ large enough. The Claim is proved.
%
  
  \medskip
  \noindent
  We call a trivializing perturbation $C$ such as $A^{\mathbf{h}}$ a {\em symmetric} trivializing perturbation, by definition. If 
$C'$ is in $C^* (X,E)^G$, 
  is diagonal with respect to  $V_{Y} \oplus W_{Y}$, is equal to 0 on $V_{Y}$,
 is $\ZZ_2$-graded odd with respect to the grading defined by  $\mathcal{I}$ on $W_{Y}$
 and  is a trivializing perturbation for $D$, then  we obtain a symmetric trivializing perturbation $C$ 
by  compressing $C'$  
 using $\phi_R$, obtaining in this way a symmetric trivializing perturbation. We can and we shall 
approximate  $C$ with an element   $B \in \mathcal{L}^c_G (X,\Lambda^* X)$.
 
%
%
%
%
%
%
%
%
%

 Let $(X,\mathbf{h})$ be an odd dimensional $G$-proper cocompact riemannian manifold
 satisfying Assumption \ref{assume-middle}.
Consider  $D^{{\rm sign}}_{\mathbf{h}}$ and let $A^{\mathbf{h}}$ be the associated  trivializing perturbation 
defined by smoothing  $A^{\mathbf{h}}_0$, the canonical 0-th order trivializing operator of Wahl.
By density we can  and we shall choose  $A^{\mathbf{h}}\in \mathcal{L}^c_G (X,\Lambda^* X)$, obtaining in this way
a {\em generalized} symmetric trivializing perturbation. 
%
%
\medskip
\noindent
For a semisimple element $g\in G$ we consider
 $
  \eta_g (D^{{\rm sign}}_{\mathbf{h}}+A^{\mathbf{h}}).$

\begin{proposition}\label{independence-sym}
Consider the delocalized eta invariant $ \eta_g (D^{{\rm sign}}_{\mathbf{h}}+A^{\mathbf{h}})$. 
\begin{enumerate}
\item If $B\in \mathcal{L}^c_G (X,\Lambda^* X)$ is another
 {\it  symmetric} trivializing perturbation then 
  \begin{equation}\label{inde-symmetric}
   \eta_g (D^{{\rm sign}}_{\mathbf{h}}+A^{\mathbf{h}})=  \eta_g (D^{{\rm sign}}_{\mathbf{h}}+B)
   \end{equation}
\item  Let $g$ be a non-elliptic element. Let $h_0$ and $h_1$  be  riemannian metrics
  satisfying Assumption \ref{assume-middle} and such that there exists a path of riemannian metrics
  $\{h_r\}_{r\in [0,1]}$ joining them and satisfying Assumption \ref{assume-middle}. Then
 $$\eta_g (D^{{\rm sign}}_{\mathbf{h}_0}+A^{\mathbf{h}_0})= \eta_g (D^{{\rm sign}}_{\mathbf{h}_1}+A^{\mathbf{h}_1}).$$
 \end{enumerate}
  \end{proposition}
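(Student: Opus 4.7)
For both parts the plan is to establish that $\eta_g(D^{\mathrm{sign}}_{\mathbf{h}} + A^{\mathbf{h}})$ depends only on the metric $\mathbf{h}$ (and on $g$), independently of the particular symmetric trivializing perturbation, and for part (2) to then handle deformations of $\mathbf{h}$ itself. The two arguments use very different mechanisms.

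For part (1), the plan is to exploit the $\mathcal{I}$-graded structure directly rather than through a deformation argument. By construction both $A^{\mathbf{h}}$ and $B$ are block-diagonal with respect to $L^2 = V_X \oplus W_X$, vanish on $V_X$, and anticommute with the involution $\mathcal{I}$ on $W_X$. Since $D^{\mathrm{sign}}_{\mathbf{h}}$ is block-diagonal and also anticommutes with $\mathcal{I}$ on $W_X$, the perturbed operator $D^{\mathrm{sign}}_{\mathbf{h}} + C$ (with $C \in \{A^{\mathbf{h}}, B\}$) agrees with $D^{\mathrm{sign}}_{\mathbf{h}}|_{V_X}$ on $V_X$ and is $\mathcal{I}$-odd on $W_X$. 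Thus on $W_X$ the eta integrand $(D^{\mathrm{sign}}_{\mathbf{h}} + C)\,e^{-t(D^{\mathrm{sign}}_{\mathbf{h}}+C)^2}$ is $\mathcal{I}$-odd, and the symmetry identity $\tau^X_g(T) = \tau^X_g(\mathcal{I} T \mathcal{I}) = -\tau^X_g(T)$ forces the $W_X$-contribution to vanish. The remaining $V_X$-contribution is identical for $C = A^{\mathbf{h}}$ and $C = B$ since both vanish on $V_X$, which gives (1).

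For part (2), where the metric changes and the decomposition $V_X \oplus W_X$ itself varies, the $\mathcal{I}$-argument is no longer directly available, and the plan is instead to mimic the cylinder argument of Proposition \ref{prop:stability}. On $Y_0 = [0,1] \times X$, equipped with the path of metrics $\mathbf{h}_r$ made constant near $r=0$ and $r=1$, one forms the associated $b$-operator $\partial_r\cdot\tau_r + D^{\mathrm{sign}}_{\mathbf{h}_r} + A(r)$, where $A(r) \in \mathcal{L}^c_G(X, \Lambda^* X)$ is a smooth family of symmetric trivializing perturbations for $D^{\mathrm{sign}}_{\mathbf{h}_r}$. Thanks to part (1), already established, the particular choice of $A(r)$ at each $r$ is immaterial for the boundary eta invariants, and existence of a smooth family follows from Assumption \ref{assume-middle} holding uniformly along the compact path. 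The boundary operators are $L^2$-invertible, so Theorem \ref{theorem-aps-ind-perturbed} applies, and by the homotopy argument of Lemma 3.13(4) of \cite{Wahl-Ramanujan} the resulting index class vanishes. Pairing the vanishing class with $\tau^Y_g$ and applying Theorem \ref{theorem-aps-ind-perturbed} yields
\begin{equation*}
0 = \int_{([0,1]\times X)^g} c^g \, {\rm AS}_g(D_0) - \tfrac{1}{2}\bigl(\eta_g(D^{\mathrm{sign}}_{\mathbf{h}_0}+A^{\mathbf{h}_0}) - \eta_g(D^{\mathrm{sign}}_{\mathbf{h}_1}+A^{\mathbf{h}_1})\bigr).
\end{equation*}
Since $g$ is non-elliptic, Proposition \ref{prop:fixed point} gives $([0,1]\times X)^g = \emptyset$, the local integral vanishes, and the two eta invariants agree.

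The hard part will be the cyclic trace argument in (1): although the involution $\mathcal{I}$ is a bounded $G$-equivariant bundle automorphism (coming from the sign associated to form degree), it is not smoothing, so one must verify both that the products $(D^{\mathrm{sign}}_{\mathbf{h}} + C)\,e^{-t(D^{\mathrm{sign}}_{\mathbf{h}}+C)^2}$ lie in $\mathcal{L}^\infty_{G,s}(X, \Lambda^* X)$ where $\tau^X_g$ is defined, and that $\tau^X_g$ is invariant under conjugation by $\mathcal{I}$. The first claim follows from the heat-kernel analysis of Section \ref{section:heat} applied to the perturbed operator; the second follows from the explicit kernel expression \eqref{tr-g-closed} for $\tau^X_g$ together with the fact that $\mathcal{I}$ commutes with the $G$-action. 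A secondary technical point in part (2) is arranging that the family $A(r)$ be simultaneously smooth in $r$ and symmetric trivializing at each $r$; this can be handled by applying an $r$-dependent cut-off $\phi_{R(r)}$ to the canonical 0-th order perturbations $A^{\mathbf{h}_r}_0$ and approximating in $\mathcal{L}^c_G$ uniformly in $r$.
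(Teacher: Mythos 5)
Your proposal is correct and follows essentially the same strategy as the paper: in part (1), kill the $W_X$-contribution to the eta integrand via the $\mathcal{I}$-grading and conclude that both eta invariants reduce to a common $V_X$-integral, and in part (2), run the cylinder/index-class-vanishing argument of Proposition~\ref{prop:stability} with the Wahl family $A^{\mathbf{h}_r}_0$ and use non-ellipticity of $g$ to kill the local term. The only difference is cosmetic: where you phrase the vanishing of the $W_X$-term as the conjugation identity $\tau^X_g(T)=\tau^X_g(\mathcal{I}T\mathcal{I})=-\tau^X_g(T)$, the paper instead writes out the eta integrand as an explicit $3\times 3$ block matrix in $V_X\oplus W_X^+\oplus W_X^-$ and reads off that the $W$-block is off-diagonal, hence traceless. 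One small imprecision you should tidy: $\mathcal{I}$ itself is not a bundle automorphism, since it involves the non-local projection $P_{W_X}$; what \emph{is} a $G$-equivariant bundle involution is the fiberwise sign operator $J$ ($-1$ on degrees $\le m-1$, $+1$ on degrees $\ge m$), and it is $J$ (which agrees with $\mathcal{I}$ on $W_X$) that one actually conjugates by to make the kernel-level invariance of $\tau^X_g$ transparent.
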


\begin{proof}
Let us consider the first property.
We will prove that  
$ \eta_g (D^{{\rm sign}}_{\mathbf{h}}+B)=  \eta_g (D^{{\rm sign}}_{\mathbf{h}}+C) 
$
for any pair of  symmetric trivializing perturbations.\\

 To this end we recall a classic property of eta invariants of Dirac operators on even dimensional manifolds, namely that they vanish. 
 Assume that $N$ is an even dimensional compact manifold, $E$ is a $\mathbb{Z}_2$-graded bundle of Clifford modules
 and $D$ is a $\mathbb{Z}_2$-graded odd Dirac operator acting on the sections of $E$. For simplicity we assume that $D$ is $L^2$-invertible. If we denote by
 $\mathcal{I}$ the grading for $E$ this means that $D\circ \mathcal{I} + \mathcal{I}\circ D=0$. 
 With respect to the decomposition $E=E^+\oplus E^-$ induced by $\mathcal{I}$ we can write the Dirac operator 
 in the block-form
$$ \left( \begin{array}{cc}0 & D^-\\
D^+&  0
\end{array} \right).
$$
The operator entering into the definition of the eta invariant, namely $De^{-t D^2}$ can be written in block form as
$$ \left( \begin{array}{cc}0 & D^-\\
D^+&  0
\end{array} \right) \circ  \left( \begin{array}{cc}e^{-t D^- D^+} & 0\\
0&  e^{-tD^+ D^-}
\end{array} \right).
$$
and this has trace equal to zero. Thus the eta invariant of $D$, $\eta(D)$,  vanishes. The same result
holds for an $L^2$-invertible perturbed operator $D+B$, with $B$ a smoothing operator, provided 
that $B$ is  $\mathbb{Z}_2$-graded odd, that is  $B\circ \mathcal{I} + \mathcal{I}\circ B=0$. Under these assumptions
$D+B$ is $\mathbb{Z}_2$-graded odd and exactly the same argument shows that $\eta (D+B)=0$.

\smallskip

Let us go back to $G$-proper manifolds and to our task, proving
that $ \eta_g (D^{{\rm sign}}_{\mathbf{h}}+B)=  \eta_g (D^{{\rm sign}}_{\mathbf{h}}+C) 
$ for two symmetric trivializing perturbations $B$ and $C$.  We write $D$ for $D^{{\rm sign}}_{\mathbf{h}}$. Recall that
$D$ is diagonal
 with respect to the decomposition $\Omega_{L^2} (X)= V_{X} \oplus W_{X}$; $D$ is invertible on $V_{X}$
 and its restriction to $W_{X}$ is $\mathbb{Z}_2$-graded odd with respect to the grading $\mathcal{I}$ of $W_{X}$. Similarly
 $D+B$ is block diagonal, equal to $D$ on $V_{X}$ and invertible there;  moreover its restriction to $W_{X}$
 is  $\mathbb{Z}_2$-graded odd with respect to the grading $\mathcal{I}$ of  $W_{X}$.
 Now we write explicitly the operator in the eta integrand, that is $(D+B)\exp (-t (D+B)^2)$ with respect to the
 decomposition $\Omega_{L^2} (X)= V_{X} \oplus (W^+_{X}\oplus W^-_{X})$; for simplicity we denote
 temporarily this decomposition as $V\oplus (W^+\oplus W^-)$. We have:
 $$ \left( \begin{array}{ccc}D_V & 0 & 0\\
0&  0 & D_{W^-}\\
0&D_{W^+}&0
\end{array} \right) \circ  \left( \begin{array}{ccc}e^{-t D^2_V} & 0 & 0\\
0 &  e^{-t D_{W^+} D_{W^-}} & 0\\
0&0& e^{-t D_{W^-} D_{W^+}} 
\end{array} \right).
$$
Similarly we can write $(D+B) e^{-t (D+B)^2}$ as
 $$ \left( \begin{array}{ccc}D_V & 0 & 0\\
0&  0 & D_{W^-}+B_-\\
0&D_{W^+}+B_+&0
\end{array} \right) \circ  \left( \begin{array}{ccc}e^{-t D^2_V} & 0 & 0\\
0 &  e^{-t (D_{W^+} + B_+) (D_{W^-}+B_-)} & 0\\
0&0& e^{-t (D_{W^-}+B_-)( D_{W^+} + B_+)} 
\end{array} \right),
$$
that we rewrite as
$$ \left( \begin{array}{ccc}D_V e^{-t D^2_V}& 0 & 0\\
0&  0 & (D_{W^-}+B_-) e^{-t (D_{W^+} + B_+) (D_{W^-}+B_-)} \\
0&(D_{W^+}+B_+) e^{-t (D_{W^-}+B_-)( D_{W^+} + B_+)} &0
\end{array} \right) .
$$
It is clear that when we apply the trace $\tau^{X}_{g}$ we get a contribution in the eta integrand that only involves
$D_V e^{-t D^2_V}$. This proves that $ \eta_g (D^{{\rm sign}}_{\mathbf{h}}+B)=  \eta_g (D^{{\rm sign}}_{\mathbf{h}}+C) 
$ for two symmetric trivializing perturbations $B$ and $C$.\\

Let us prove the second property:  if  $\mathbf{h}_r$, $r\in [0,1]$, is a family of  metrics satisfying Assumption \ref{assume-middle}  
then from the family of 0-th order perturbations of Wahl,
$$A^{\mathbf{h}_r}_0 := i\mathcal{I}\tau^{\mathbf{h}_r}_{X},$$ 
we can construct  a 1-parameter family of 
trivializing perturbations $A^{\mathbf{h}_r}$ for $D^{{\rm sign}}_{\mathbf{h}_r}$ and then we have, as before,
 $$\eta_g (D^{{\rm sign}}_{\mathbf{h}_0}+A^{\mathbf{h}_0})=\eta_g (D^{{\rm sign}}_{\mathbf{h}_1}+A^{\mathbf{h}_1})$$
 for a non-elliptic element $g\in G$. Indeed, 
 as the two operators $D^{{\rm sign}}_{\mathbf{h}_0}+A^{\mathbf{h}_0}$ and 
 $D^{{\rm sign}}_{\mathbf{h}_1}+A^{\mathbf{h}_1}$
can be joined by the family of invertible operators $D^{{\rm sign}}_{\mathbf{h}_r}+A^{\mathbf{h}_r}$, the argument given in the proof of Proposition \ref{prop:independence-rho} implies also in this case  that 
$0=\eta_g (D^{{\rm sign}}_{\mathbf{h}_0}+A^{\mathbf{h}_0})-\eta_g (D^{{\rm sign}}_{\mathbf{h}_1}+A^{\mathbf{h}_1})+ 0$,
where the last summand on the right hand side is a zero because $g$ is non-elliptic.
\end{proof}

  \begin{definition}\label{def:eta-symmetric}
  Let $(X,\mathbf{h})$ be an odd dimensional $G$-proper cocompact riemannian manifold
 satisfying Assumption \ref{assume-middle}.
 We set
 \begin{equation}\label{def-eta-sign}
 \eta_g (D^{{\rm sign}}_{\mathbf{h}}): =  \eta_g (D^{{\rm sign}}_{\mathbf{h}}+B)
 \end{equation} 
  for any {\it  symmetric} trivializing perturbation $B\in \mathcal{L}^c_G (X,\Lambda^* X)$; this is well-defined, independent on the choice 
  of the symmetric trivializing perturbation $B$, because of \eqref{inde-symmetric}.
 \end{definition}


\subsection{A  APS-index theorem for the signature operator on $G$-proper manifolds} $\;$\\
We are now ready to state and prove a delocalized  APS index theorem for the signature operator 
on $G$-proper manifolds with boundary. 

\begin{theorem}
Let $(Y_0,\mathbf{h}_0)$ be
an even-dimensional  $G$-proper manifold with boundary, with $G$-invariant metric $\mathbf{h}_0$  that is product-like near the boundary. Let
$D^{{\rm sign}}_{\mathbf{h}_0}$ be the associated signature operator.
Let $(Y,\mathbf{h})$ be the associated $b$-manifold with associated $b$-differential operator $D^{{\rm sign}}_{\mathbf{h}}$. 
Let $\mathbf{h}_{\partial}$ be the boundary metric and let $X=\partial Y_0$, endowed with the metric $\mathbf{h}_{\partial}$.
Assume that $\mathbf{h}_{\partial}$ satisfies Assumption \ref{assume-middle}.  Let $A_\partial$ be a {\it symmetric} trivializing perturbation
for the boundary operator $D^{{\rm sign}}_{\mathbf{h}_\partial}$;  we can and we shall assume that  $A_\partial\in \mathcal{L}^c_G (X,\Lambda^* X)$. We have:

\begin{enumerate}
\item there exists a well-defined signature class 
$\sigma_{C^*} (Y,\partial Y, \mathbf{h}, A_\partial)\in K_0 ( C^* (Y_0\subset Y)^G)$
and  thus a well defined delocalized signature $\sigma_g (Y,\partial Y, \mathbf{h},A_\partial):= \langle \tau^Y_g, \sigma_{C^*} (Y,\partial Y, \mathbf{h},A_\partial) \rangle$ ;
\item 
the following APS delocalized signature formula holds
$$\sigma_g (Y,\partial Y, \mathbf{h},A_\partial)= \int_{(Y_0)^g} c^g {\rm AS}_g (D^{{\rm sign}}_0) - 
\frac{1}{2} \eta_g (D^{{\rm sign}}_{\mathbf{h}_{\partial}} + A_{\partial})= \int_{(Y_0)^g} c^g {\rm AS}_g (D^{{\rm sign}}_0) - 
\frac{1}{2} \eta_g (D^{{\rm sign}}_{\mathbf{h}_{\partial}})
$$
with $\eta_g (D^{{\rm sign}}_{\mathbf{h}_{\partial}})$ as in \eqref{def-eta-sign}. Here Proposition 
\ref{independence-sym} and Definition \ref{def:eta-symmetric} have been used in the second equality.\\ Notice that, consequently, the left hand side does not depend on the choice of $A_\partial$ and can hence be denoted $\sigma_g (Y,\partial Y, \mathbf{h})$.  Let $N^g$ be the normal bundle of the fixed point submanifold $(Y_0)^g$ in $Y_0$. The explicit expression $ {\rm AS}_g (D^{{\rm sign}}_0)$ is a product of $L^g(N^g)$ and $L((Y_0)^g)$, where $L^g(N^g)$ is the $g$-twisted $L$-class of Hirezruch associated to the $g$ action on $N^g$ and $L((Y_0)^g)$ is the $L$-class of $(Y_0)^g$, c.f. \cite[Theorem 4.7]{BGV}, \cite[Theorem 14.5]{LaMi}. 
More explicitly,  if   $R_{N^g}$ and $R_{(Y_0)^g}$ are the curvatures forms on $N^g$ and $(Y_0)^g$ associated to the Levi-Civita connection on $TY_0$, 
\[
L^g(N^g)=\frac{\det^{\frac{1}{2}}\Big(g\frac{R_{N^g}}{\tanh R_{N^g}}\Big)}{\det^{\frac{1}{2}}(1-g\exp(-R_{N^g}))},\ \ \ 
L((Y_0)^g)= {\det}^{\frac{1}{2}}  \Big(\frac{R_{(Y_0)^g/2}}{\tanh (R_{(Y_0)^g}/2)}\Big).
\]

\item  if $\mathbf{h}(r)$, $r\in [0,1]$ is a 1-parameter family of metrics with $ \mathbf{h}_{\partial} (r)$ satisfying Assumption \ref{assume-middle}, then $\sigma_{g} (Y,\partial Y, \mathbf{h}_0)= \sigma_{g} (Y,\partial Y, \mathbf{h}_1)$.
\end{enumerate}
\end{theorem}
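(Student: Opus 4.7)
For part (1), I would invoke Theorem~\ref{theorem-aps-ind-perturbed} directly. By construction $A_\partial$ is a symmetric trivializing perturbation in $\mathcal{L}^c_G(X,\Lambda^*X)$ making $D^{{\rm sign}}_{\mathbf{h}_\partial}+A_\partial$ $L^2$-invertible, so that theorem produces a well-defined index class $\Ind(D^{{\rm sign}}_\mathbf{h},A_\partial)\in K_0(\mathcal{L}^\infty_{G,s}(Y))=K_0(C^*(Y_0\subset Y)^G)$, which I take as the definition of $\sigma_{C^*}(Y,\partial Y,\mathbf{h},A_\partial)$, and hence of $\sigma_g(Y,\partial Y,\mathbf{h},A_\partial)$ by pairing with $\tau^Y_g$.

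For part (2), the first equality is precisely formula \eqref{main-perturbed} of Theorem~\ref{theorem-aps-ind-perturbed} applied with $C_\partial=A_\partial$. The second equality, and thus the $A_\partial$-independence of the LHS, follows from Proposition~\ref{independence-sym}(1) and Definition~\ref{def:eta-symmetric}: for any symmetric trivializing perturbation the perturbed eta invariant equals the intrinsic metric quantity $\eta_g(D^{{\rm sign}}_{\mathbf{h}_\partial})$. The explicit expression of $AS_g(D^{{\rm sign}}_0)$ as the product of $L^g(N^g)$ and $L((Y_0)^g)$ is the classical Atiyah--Segal local formula for the signature operator at fixed submanifolds, and reduces to the standard computation of \cite[Thm.~4.7]{BGV}, \cite[Thm.~14.5]{LaMi}.

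For part (3), the plan is to prove that the underlying K-theory class $\sigma_{C^*}(Y,\partial Y,\mathbf{h}(r),A_{\partial,r})\in K_0(C^*(Y_0\subset Y)^G)$ is independent of $r$; the asserted equality of delocalized signatures then follows by pairing with the cyclic $0$-cocycle $\tau^Y_g$. I would construct a continuous family $r\mapsto A_{\partial,r}\in\mathcal{L}^c_G(X,\Lambda^*X)$ of symmetric trivializing perturbations by running the construction of Section~\ref{sect:numeric} along the family $\mathbf{h}_\partial(r)$: form Wahl's canonical $0$-th order perturbation $A^{\mathbf{h}_\partial(r)}_0=i\mathcal{I}_r\tau^{\mathbf{h}_\partial(r)}_X$, compress it by $\phi_R(D^{{\rm sign}}_{\mathbf{h}_\partial(r)})$ with $R$ chosen uniformly in $r$, and approximate in operator norm by elements of $\mathcal{L}^c_G$. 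After identifying the $r$-dependent $L^2$ spaces by the usual Jacobian isomorphism so that everything is realized on a common Hilbert space, the resulting family of invertible boundary-perturbed signature operators lifts (via Theorem~\ref{theorem-aps-ind-perturbed}) to a norm-continuous path of Connes--Skandalis idempotents in the unitization of $\mathcal{L}^\infty_{G,s}(Y)$, and such a path has constant $K_0$ class.

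The main obstacle is the continuous choice of the symmetric perturbations. The involution $\mathcal{I}_r$ is built from the Hodge decomposition $\Omega_{L^2}(X)=V_{X,r}\oplus W_{X,r}$, whose summands depend on $\mathbf{h}_\partial(r)$; continuous variation of $\mathcal{I}_r$ requires continuity of the spectral projections of $\Delta^{[m]}_{\mathbf{h}_\partial(r)}$ onto these subspaces, which in turn requires a uniform spectral gap near $0$ in $r\in[0,1]$. By the pointwise Assumption~\ref{assume-middle} and the compactness of $[0,1]$ this gap exists and the spectral projections are norm-continuous, so $\mathcal{I}_r$, $A^{\mathbf{h}_\partial(r)}_0$ and thus the compressed and smoothed family $A_{\partial,r}$ all vary continuously. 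Once this continuity is in place, the rest is a formal application of the homotopy invariance of the Connes--Skandalis index under continuous deformation of invertible boundary data.
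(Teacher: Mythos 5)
Your proposal follows essentially the same route as the paper: part (1) by appealing to Theorem~\ref{theorem-aps-ind-perturbed}, part (2) by combining its index formula with Proposition~\ref{independence-sym} and Definition~\ref{def:eta-symmetric}, and part (3) by producing a continuous family of symmetric trivializing perturbations from Wahl's canonical $0$-th order operator and invoking homotopy invariance of the index class. The only difference is that you spell out the continuity of the family $A_{\partial,r}$ (uniform spectral gap and norm-continuity of the spectral projections) where the paper leaves this as ``well-known results''; that is a welcome amplification rather than a divergence in method.
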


\begin{proof}
The signature class $\sigma_{C^*} (Y,\partial Y, \mathbf{h}, A_\partial)\in K_0 ( C^* (Y_0\subset Y)^G)$ is equal, by definition,
to $\Ind (D^{{\rm sign}}_{Y,\mathbf{h}}, A_{\partial})$. 
The signature formula is a direct consequence of our delocalized APS index theorem
and definition \eqref{def-eta-sign}.\\
 The independence on the metric can be proved as follows:
if $\mathbf{h}(r)$, $r\in [0,1]$ is a 1-parameter family of metrics with $ \mathbf{h}_{\partial} (r)$ satisfying Assumption \ref{assume-middle}
and if 
$A^{\mathbf{h}_\partial (r)}$ is the associated  1-parameter family of trivializing perturbation 
built out of Wahl's canonical 0-th order perturbation, then
$\Ind (D^{{\rm sign}}_{Y,h(0)}, A^{h_\partial (0)})=\Ind (D^{{\rm sign}}_{Y,h(1)}, A^{h_\partial (1)})$; indeed, 
through $A^{\mathbf{h}_\partial (r)}$  we can define a continuous family of  operators,
$D^{{\rm sign}}_{Y,\mathbf{h}(r)} + A^{\mathbf{h}_\partial (r)}$
to which we can apply
well-know results.
Thus 
$\sigma_{C^*} (Y,\partial Y, \mathbf{h}_0, A^{\mathbf{h}_\partial (0)})= \sigma_{C^*} (Y,\partial Y, \mathbf{h}_1, A^{\mathbf{h}_\partial (1)})$ in $K_0 ( C^*(Y_0\subset Y)^G)$ and the assertion 
$\sigma_{g} (Y,\partial Y, \mathbf{h}_0)= \sigma_{g} (Y,\partial Y, \mathbf{h}_1)$ follows.

\end{proof}

\section{Further results: the gap case}\label{sect:further}
In this section we shall treat a different non-invertible situation. That of an operator that has $0$ isolated in its $L^2$-spectrum. We shall introduce and study the delocalized eta invariant and 
prove a corresponding APS index theorem. 

We use the same notations as in the previous subsections.

\subsection{Delocalized eta invariants associated to the gap case}
\begin{definition}
\label{def:perturbed Dirac}
For any $\Theta>0$, let us introduce the perturbed Dirac operator 
\[
D_\Theta =D + \Theta. 
\]
Let $\kappa^\Theta_t$ be the smoothing kernel of the following operator
\[
D_\Theta \cdot \exp(-tD^2_\Theta). 
\]
\end{definition}
For any small number $s >0$, we split the integral
\begin{equation}
\label{eq regulareta} 
\begin{split}
\int_s^\infty \tau^{X}_{g} (D_\Theta\exp (-tD_\Theta^2) \frac{dt}{\sqrt{t}} 
=&\int_s^\infty \int_X\int_G c_G(h)c(hy) {\rm tr}(\kappa^\Theta_t(y, gy)g)\; dh dy \frac{dt}{\sqrt{t}}\\
=&\int_s^\infty \int_{N(r)}\int_G c_G(h)c(hy) {\rm tr}(\kappa^\Theta_t(y, gy)g)\; dh dy \frac{dt}{\sqrt{t}}\\
+&\int_s^\infty \int_{N^c(r)}\int_G c_G(h)c(hy) {\rm tr}(\kappa^\Theta_t(y, gy)g)\; dh dy \frac{dt}{\sqrt{t}}, 
\end{split}
\end{equation}
where $N(r)$ is a properly chosen $r$-neighborhood of the fixed point submanifold $X^g$ in $X$, c.f. \cite[Proposition 5.27 and Lemma 5.30]{PPST}. 

Note that the integral
\[
\int_s^\infty \tau^{X}_{g} (D_\Theta\exp (-tD_\Theta^2) \frac{dt}{\sqrt{t}}
\]
might be divergent as $s \to 0$. The reason is that $D_\Theta$ is no longer associated to a Clifford connection and thus 
\[
\int_s^\infty \int_{N(r)}\int_G c_G(h)c(hy) {\rm tr}(\kappa^\Theta_t(y, gy)g)\; dh dy \frac{dt}{\sqrt{t}}
\]
will in general tend to infinity as $s \to 0$. 

\begin{proposition}
For the terms in the righthand side of (\ref{eq regulareta}),  the following holds
\begin{enumerate}
	\item The integrand of the first summand has the asymptotic expansion \cite[Section 2]{Zhangwp}:
\[
\int_{N(r)}\int_G c_G(h)c(hy) {\rm tr}(\kappa^\Theta_t(y, gy)g)\; dh dy \sim  t^{-\frac{\dim X+1}{2}} \left(\sum_{i=0}^\infty a_i(r) \cdot t^i\right)
\]
We define
\[
A(r) \colon ={\rm LIM}_{s\downarrow 0}\frac{1}{\sqrt{\pi}} \int_s^\infty \int_{N(r)}\int_G c_G(h)c(hy) {\rm tr}(\kappa^\Theta_t(y, gy)g)\; dh dy \frac{dt}{\sqrt{t}}
\]
as the coefficient of $s^0$ in this asymptotic expansion.  
\item
The integral 
\[
\int_{N^c(r)}\int_G c_G(h)c(hy) {\rm tr}(\kappa^\Theta_t(y, gy)g)\; dh dy
\]
is of exponential decay as $t \to 0$. And thus
\[
\begin{split}
B(r) \colon =& \lim_{s \to 0} \int_s^\infty\int_{N^c(r)}\int_G c_G(h)c(hy) {\rm tr}(\kappa^\Theta_t(y, gy)g)\; dh dy\frac{dt}{\sqrt{t}}\\
=&\int_0^\infty\int_{N^c(r)}\int_G c_G(h)c(hy) {\rm tr}(\kappa^\Theta_t(y, gy)g)\; dh dy\frac{dt}{\sqrt{t}}
\end{split}
\]
is well defined.
\end{enumerate}	
\end{proposition}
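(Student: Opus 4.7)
The plan is to reduce both statements to heat-kernel asymptotics that are already understood for the unperturbed Dirac operator in \cite{PPST}, and then to keep track of the extra contributions introduced by the zeroth-order perturbation $\Theta$. Since $\Theta$ is a scalar (or at worst a bounded zeroth-order geometric term), we have
\[
D_\Theta^2 = D^2 + R, \qquad R = 2\Theta D + \Theta^2,
\]
so Duhamel's formula (the Volterra series) expresses $\exp(-tD_\Theta^2)$ as a convergent sum of iterated time-integrals whose $k$-th term is bounded in Schwartz norms by $t^kC^k/k!$ times the unperturbed heat kernel. Multiplying by $D_\Theta=D+\Theta$ preserves this structure. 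Consequently, $\kappa_t^\Theta(y,gy)$ has, near the diagonal of the $g$-twisted product, the same class of small-$t$ polyhomogeneous expansions as $\kappa_t(y,gy)$; the perturbation only alters the coefficients.

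For part (1), I would import the local short-time expansion used in \cite[Proposition 5.27 and Lemma 5.30]{PPST} (the setup that defines $N(r)$) and the Getzler-type rescaling argument of Zhang in \cite{Zhangwp}. On $N(r)$, pick Fermi coordinates around the fixed-point submanifold $X^g$ and apply Getzler rescaling to $D^2$; the rescaled heat kernel has a smooth limit whose trace pairing with $g$ produces the ordinary fixed-point integrand. The crucial difference from the Clifford-connection case is that $R=2\Theta D+\Theta^2$ rescales with a \emph{positive} weight under Getzler rescaling (since $D$ has weight $1$), so in the Volterra expansion the terms contributed by $R$ fail to produce the usual cancellations that killed the divergent powers of $t$ in the Clifford case. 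Running the expansion to any finite order, integrating over $N(r)$ against the compactly supported cut-offs $c$ and $c_G$, and using the uniformity of the remainder on the compact support of $c(h\cdot)c_G(h)$ yields the asymptotic
\[
\int_{N(r)}\!\!\int_G c_G(h)c(hy)\,\tr(\kappa_t^\Theta(y,gy)g)\,dh\,dy \;\sim\; t^{-(\dim X+1)/2}\sum_{i\ge 0} a_i(r)\,t^i.
\]
Dividing by $\sqrt{t}$ and integrating from $s$ to $\infty$, each term contributes either a strictly positive or strictly negative half-integer power of $s$ (since $\dim X$ is odd, no $\log s$ ever appears), so the regularized limit $\mathrm{LIM}_{s\downarrow 0}$ is unambiguously defined as the coefficient of $s^0$.

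For part (2), I would invoke the standard off-diagonal Gaussian bound for the heat kernel of a Dirac-type operator on a complete Riemannian manifold (Cheeger--Gromov--Taylor or finite propagation speed combined with the spectral theorem), which gives, for a zeroth-order perturbation like $D_\Theta$,
\[
\bigl\lVert\kappa_t^\Theta(y,y')\bigr\rVert \;\le\; C\, t^{-N}\,e^{-d(y,y')^2/(5t)},\qquad 0<t\le 1,
\]
uniformly in $y,y'$, with the $e^{-t\Theta^2}$ type corrections from the Volterra expansion only improving the estimate. On $N^c(r)$ the distance $d(y,gy)$ is bounded below by a positive constant $\delta(r)$ (uniformly on the compact support of the cut-offs, by $G$-cocompactness and the choice of $N(r)$). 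Hence the integrand in the second summand is majorized by $C\,t^{-N}e^{-\delta(r)^2/(5t)}$ times a finite volume factor coming from the compact support of $c(h\cdot)c_G(h)$, which is integrable on $(0,\infty)$ against $dt/\sqrt t$ and vanishes exponentially as $t\downarrow 0$.

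The main obstacle I expect is the uniformity of the off-diagonal bound as one integrates over $G$ against $c_G$: in the reductive setting one must exploit the cocompactness of the support together with the polynomial bounds inherent in the Lafforgue algebra estimates (cf.\ the discussion of $\tau_g^X$ in Section \ref{sect:preliminaries}) to guarantee that the $h$-integration does not destroy the Gaussian decay. Once this is handled---exactly as in \cite{PPST}, whose bookkeeping carries over verbatim because the Volterra correction from $R$ is of finite Lafforgue-seminorm---the proof reduces to combining the local expansion for part (1) with the Gaussian estimate for part (2).
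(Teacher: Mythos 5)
Your proposal follows essentially the same route as the paper, which simply cites \cite[Lemma 5.28]{PPST} together with Zhang's asymptotic expansion \cite[Section 2]{Zhangwp} for part (1) and \cite[Proposition 5.25]{PPST} for part (2) and leaves the details to the reader; your expansion of $D_\Theta^2 = D^2 + 2\Theta D + \Theta^2$, the Volterra/Duhamel reduction to the unperturbed heat kernel, the Getzler rescaling for the near-fixed-point expansion, and the off-diagonal Gaussian estimate on $N^c(r)$ are precisely the ingredients the authors intend. The one thing worth making explicit (which you gesture at but do not pin down) is why no $\log s$ term can appear: since $\dim X$ is odd, the exponents $-\tfrac{\dim X+1}{2} + i - \tfrac12$ arising after dividing by $\sqrt t$ are never equal to $-1$, so the regularized limit $\mathrm{LIM}_{s\downarrow 0}$ is unambiguously the $s^0$ coefficient of the finite-part expansion.
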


\begin{proof}Part (i) follows from the similar arguments as in \cite[Lemma 5.28]{PPST} via asymptotic expansion \cite[Section 2]{Zhangwp}. And Part (ii) follows from the same arguments as in \cite[Proposition 5.25]{PPST}. We leave the details to the reader.
\end{proof}

\begin{definition}\label{def:modified}
For the perturbed Dirac operator $D_\Theta$, we define the \emph{regularized eta invariant} as
\begin{equation}\label{regularized}
\begin{split}
\eta_g(\Theta) =& {\rm LIM}_{s\downarrow 0}\frac{1}{\sqrt{\pi}} \int_s^\infty \tau^{X}_{g}(D_\Theta \exp (-tD_\Theta^2)) \frac{dt}{\sqrt{t}}\\
=&A(r) + B(r). 
\end{split}
\end{equation}
\end{definition}

\begin{lemma}
The regularized eta invariant $\eta_g(\Theta)$ is independent of the choice of $r$. 
\begin{proof}
Suppose that $r'> r >0$ are two small constants. We want to show that 
\[
A(r) + B(r)  = A(r') + B(r'). 
\]	
By definition, we have 
\[
\begin{split}
A(r') =&{\rm LIM}_{s\downarrow 0}\frac{1}{\sqrt{\pi}} \int_s^\infty\int_{N(r')}\int_G c_G(h)c(hy) {\rm tr}(\kappa^\Theta_t(y, gy)g)\; dh dy \frac{dt}{\sqrt{t}}\\
=&{\rm LIM}_{s\downarrow 0}\frac{1}{\sqrt{\pi}} \int_s^\infty\int_{N(r)}\int_G c_G(h)c(hy) {\rm tr}(\kappa^\Theta_t(y, gy)g)\; dh dy \frac{dt}{\sqrt{t}}\\
+&{\rm LIM}_{s\downarrow 0}\frac{1}{\sqrt{\pi}} \int_s^\infty \int_{N(r')\setminus N(r) }\int_G c_G(h)c(hy) {\rm tr}(\kappa^\Theta_t(y, gy)g)\; dh dy \frac{dt}{\sqrt{t}}.
\end{split}
\]
Because 
\[
\int_{N(r') \setminus N(r)}\int_G c_G(h)c(hy) {\rm tr}(\kappa^\Theta_t(y, gy)g)\; dh dy
\]
is of exponential decay as $t \to 0$, the following limit is finite,
\[
\begin{split}
&{\rm LIM}_{s\downarrow 0}\int_s^\infty \int_{N(r')\setminus N(r) }\int_G c_G(h)c(hy) {\rm tr}(\kappa^\Theta_t(y, gy)g)\; dh dy \frac{dt}{\sqrt{t}}\\
=&\int_0^\infty\int_{N(r') \setminus N(r) }\int_G c_G(h)c(hy) {\rm tr}(\kappa^\Theta_t(y, gy)g)\; dh dy \frac{dt}{\sqrt{t}}< +\infty. 
\end{split}
\]
Thus, 
\[
A(r') = A(r) + \int_0^\infty\int_{N(r') \setminus N(r) }\int_G c_G(h)c(hy) {\rm tr}(\kappa^\Theta_t(y, gy)g)\; dh dy \frac{dt}{\sqrt{t}}. 
\]
On the other hand, we compute
\[
\begin{split}
B(r)= &\int_0^\infty\int_{N^c(r)}\int_G c_G(h)c(hy) {\rm tr}(\kappa^\Theta_t(y, gy)g)\; dh dy\frac{dt}{\sqrt{t}}\\
=&B(r')+ \int_0^\infty\int_{N(r') \setminus N(r) }\int_G c_G(h)c(hy) {\rm tr}(\kappa^\Theta_t(y, gy)g)\; dh dy \frac{dt}{\sqrt{t}}.
\end{split}
\]
This shows that 
\[
A(r) + B(r) = A(r') + B(r'). 
\]
\end{proof}

\end{lemma}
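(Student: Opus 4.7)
The plan is to take two radii $0 < r < r'$ and to match the difference $A(r') - A(r)$ against the difference $B(r) - B(r')$, showing that both equal a single absolutely convergent integral over the annular region $N(r') \setminus N(r)$. Since $N(r') = N(r) \cup (N(r') \setminus N(r))$ and $N^c(r) = N^c(r') \cup (N(r') \setminus N(r))$, and since LIM is linear and insensitive to the addition of an already convergent expression, this reduces the proof to analyzing the annular contribution in isolation.

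The key observation is that on $N(r') \setminus N(r)$ every point $y$ lies at distance at least $r$ from the fixed-point set $X^g$, so $\dist(y, gy) \geq 2r$. Combining the standard Gaussian off-diagonal estimate for $\exp(-t D^2)$ with the Volterra series expansion of Section \ref{section:heat} for $\exp(-t D_\Theta^2)$, the smoothing kernel $\kappa^\Theta_t(y, gy)$ inherits a bound of the form $C t^{-N} \exp(-c r^2 / t)$ uniformly in $y \in N(r') \setminus N(r)$, and hence is exponentially decaying as $t \downarrow 0$. Consequently the iterated integral
\[
I(r, r') := \int_0^\infty \int_{N(r') \setminus N(r)} \int_G c_G(h) c(hy) \, {\rm tr}(\kappa^\Theta_t(y, gy) g) \, dh \, dy \, \frac{dt}{\sqrt{t}}
\]
converges absolutely at both endpoints, and in particular its LIM-regularized version at $s \downarrow 0$ coincides with $I(r, r')$ itself. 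Additivity of LIM then gives $A(r') = A(r) + I(r, r')$ and $B(r) = B(r') + I(r, r')$; adding these identities and cancelling $I(r, r')$ yields the desired equality $A(r) + B(r) = A(r') + B(r')$.

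The main technical obstacle is verifying the off-diagonal heat kernel estimate for the perturbed operator $D_\Theta$: namely, that adding the $0$th-order perturbation does not disrupt the Gaussian decay in the geodesic distance between source and target. I expect this to follow termwise from the Volterra expansion, using that each factor $e^{-\sigma_j t D^2}$ carries the appropriate Gaussian weight and that convolution of such Gaussians along the geodesic preserves the exponential decay. Once this pointwise estimate is in hand, the remainder of the argument is a formal manipulation of the LIM regularization combined with the integral decomposition.
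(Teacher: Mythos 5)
Your proof follows exactly the same decomposition as the paper: write both $A(r')$ and $B(r)$ in terms of the annular integral over $N(r') \setminus N(r)$, observe that the annular contribution is an ordinary convergent integral (so its ${\rm LIM}$-regularization is trivial), and cancel. The only caveat is that your assertion $\dist(y,gy) \geq 2r$ on $N(r')\setminus N(r)$ is not literally true (for a rotation-type isometry, the displacement of a point a distance $r$ from the fixed set can be much smaller than $2r$); the correct statement, which the paper delegates to \cite[Proposition 5.25]{PPST}, is that the displacement function $y \mapsto \dist(y,gy)$ is continuous, positive, and hence bounded away from zero on the compact region cut out by the cutoff functions, which is all the Gaussian off-diagonal bound actually needs.
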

\begin{lemma}\label{lemma:eta-gap}
If  $0$ is isolated in the $L^2$-spectrum of $D$, then 
\begin{enumerate} 
\item the delocalized eta invariant 
$$\eta_g (D)= \frac{1}{\sqrt{\pi}} \int_0^\infty \tau^{X}_{g}(D \exp (-t D^2)) \frac{dt}{\sqrt{t}}$$
is well defined;
\item 	the projection operator $\Pi_{\ker D}\in  \mathcal{L}^\infty_{G, s} (X)$ and $\tau^{X}_g (\Pi_{\ker D})$ is finite.
\end{enumerate}

\begin{proof}
We are assuming  that $D$, and thus $D^2$, has a gap at $0$. Then there exists $\sigma>0$ such that ${\rm spec}_{L^2} (D^2)\cap [-2\sigma,2\sigma]=\{0\}$.
Let $\gamma$ be the path given by the union of the two straight lines ${\rm Im}z=\pm m {\rm Re}z$, $m={\rm tg} \phi>0$, $m$ small, for $|z|>\sigma$ and of
the portion of the circle of radius $\sigma$ joining the  points $$\{\sigma\cos\phi  + i \sigma\sin\phi,\sigma\cos\phi-i\sigma\sin\phi\}$$ and passing through the 
point $-\sigma\equiv -\sigma+i 0$ in the negative real axis. We know that
$$\exp(-tD^2)=\frac{1}{2\pi i}\int_\gamma e^{-t\lambda} (D^2-\lambda)^2 d\lambda\,.$$
Consider now $\gamma_\sigma$, the path obtained by taking  the union of the two straight lines ${\rm Im}z=\pm m {\rm Re}z$, $m={\rm tg} \phi>0$ for $|z|>\sigma$ and 
of the portion of the circle of radius $\sigma$ joining the  points $\{\sigma\cos\phi  + i \sigma\sin\phi,\sigma\cos\phi-i\sigma\sin\phi\}$ and passing through the 
point $\sigma\equiv \sigma+i0$ in the positive real axis. If we denote by $C_\sigma$ the circle of radius $\sigma$ centred at the origin, then
$$\exp(-tD^2)=\frac{1}{2\pi i}\int_{C_\sigma}  e^{-t\lambda} (D^2-\lambda)^2 d\lambda + \frac{1}{2\pi i}\int_{\gamma_\sigma}  e^{-t\lambda} (D^2-\lambda)^2 d\lambda.$$
It is well known that
$$\frac{1}{2\pi i}\int_{C_\sigma}  e^{-t\lambda} (D^2-\lambda)^2 d\lambda= \Pi_{\ker D^2}\equiv\Pi_{\ker D}\,.$$
See \cite{Shubin-Book}. Thus 
\begin{equation}\label{gap-decomposition-heat}
\exp(-tD^2)= \Pi_{\ker D}+ \frac{1}{2\pi i}\int_{\gamma_\sigma}  e^{-t\lambda} (D^2-\lambda)^2 d\lambda.
\end{equation}
This establishes that $$\Pi_{\ker D}\,,\text{ which is equal to }\frac{1}{2\pi i} \int_{C_\sigma}  e^{-t\lambda} (D^2-\lambda)^2 d\lambda \,,
\text{ is in }  \mathcal{L}^\infty_{G, s} (M)$$ (proof as in \cite[Proposition 2.9]{PP2}) and that $\exp(-tD^2)\to \Pi_{\ker D}$ in 
$\mathcal{L}^\infty_{G, s} (M)$ and 
exponentially, as $t\to +\infty$, where the techniques introduced in the proof of Proposition \ref{prop:large-no-boundary-unperturbed} must be used.
 In particular $\tau^{X}_g (\Pi_{\ker D})$
is finite. From \eqref{gap-decomposition-heat} we also see that 
$$ \int_1^\infty \tau^{X}_{g}(D \exp (-t D^2)) \frac{dt}{\sqrt{t}}$$
converges. Since we know that
$$ \int_0^1 \tau^{X}_{g}(D \exp (-t D^2)) \frac{dt}{\sqrt{t}}$$
does converge, see \cite[Proposition 5.29]{PPST}, we conclude that $\eta_g (D)$ is indeed well defined.
\end{proof}

\end{lemma}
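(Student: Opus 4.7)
The plan is to exploit the spectral gap to split the heat operator $\exp(-tD^2)$ into a kernel projection part and an exponentially decaying remainder, both living in $\mathcal{L}^\infty_{G,s}(X)$. First I would fix $\sigma>0$ with $\operatorname{spec}_{L^2}(D^2)\cap[0,2\sigma] = \{0\}$ and write $\exp(-tD^2)$ via the Dunford contour integral $\frac{1}{2\pi i}\int_\gamma e^{-t\lambda}(\lambda - D^2)^{-1}d\lambda$. I would then deform $\gamma$ into the union of two pieces: a small circle $C_\sigma$ of radius $\sigma$ centred at the origin (encircling only the eigenvalue $0$) and a path $\gamma_\sigma$ that surrounds the rest of the spectrum and stays at distance $\geq \sigma$ from it. The contour $C_\sigma$ yields the spectral projection $\Pi_{\ker D^2} = \Pi_{\ker D}$, giving the decomposition
\begin{equation*}
\exp(-tD^2) = \Pi_{\ker D} + \frac{1}{2\pi i}\int_{\gamma_\sigma} e^{-t\lambda}(\lambda - D^2)^{-1}d\lambda.
\end{equation*}

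For part (2), to show $\Pi_{\ker D}\in\mathcal{L}^\infty_{G,s}(X)$ I would follow the strategy of \cite[Proposition 2.9]{PP2}: the resolvent $(\lambda - D^2)^{-1}$ on the compact contour $C_\sigma$ can be analyzed via Schwartz-kernel estimates, exploiting the uniform bound $\operatorname{dist}(\lambda,\operatorname{spec}(D^2))\geq$ const. and the off-diagonal decay inherited from finite propagation plus a functional calculus argument, producing an element of the Lafforgue algebra. Continuity of the trace $\tau^X_g$ on $\mathcal{L}^\infty_{G,s}(X)$ (Definition~\ref{defn:laffargue} and \eqref{delocalized-trace-on-closed}) then yields finiteness of $\tau^X_g(\Pi_{\ker D})$ at once.

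For part (1), the small-time convergence of $\int_0^1 \tau^X_g(D\exp(-tD^2))dt/\sqrt{t}$ is already established in \cite[Proposition 5.29]{PPST}, so only the large-time tail requires work. The key observation is that $D\Pi_{\ker D}=0$, hence
\begin{equation*}
D\exp(-tD^2) = \frac{1}{2\pi i}\int_{\gamma_\sigma} e^{-t\lambda}\, D(\lambda-D^2)^{-1}\, d\lambda.
\end{equation*}
On $\gamma_\sigma$, which lies at distance at least $\sigma$ from $\operatorname{spec}(D^2)\setminus\{0\}$, the operator $D(\lambda - D^2)^{-1}$ is bounded uniformly and the exponential factor $e^{-t\operatorname{Re}\lambda}$ provides decay at the rate $e^{-t\sigma}$ once $\gamma_\sigma$ is arranged to cross the real axis at $\sigma$. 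Using the estimates developed in the proof of Proposition~\ref{prop:large-no-boundary-unperturbed} — which are exactly designed to convert such contour estimates into seminorm bounds on $\mathcal{L}^\infty_{G,s}(X)$ — I would conclude that $D\exp(-tD^2)$ tends to $0$ exponentially in the Lafforgue algebra, which by continuity of $\tau^X_g$ makes $\int_1^\infty \tau^X_g(D\exp(-tD^2))dt/\sqrt{t}$ absolutely convergent.

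The main obstacle will be the Lafforgue-algebra upgrade of the contour integral estimates: at the $C^*$-level the gap argument is classical, but establishing that the small-circle piece produces an element of $\mathcal{L}^\infty_{G,s}(X)$ and that the $\gamma_\sigma$-piece has exponentially decaying seminorms requires controlling the resolvent through its smoothing kernel and its behaviour under the seminorms $\nu_s$ of Definition~\ref{defn:laffargue}. This is where I would need to adapt the techniques from Proposition~\ref{prop:large-no-boundary-unperturbed} (and PP2, Prop.~2.9) essentially verbatim, since these are the tools that bridge the gap between operator-norm estimates and Fr\'echet-seminorm estimates in the residual algebra.
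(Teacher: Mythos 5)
Your proof follows essentially the same route as the paper: split the Dunford contour into the small circle $C_\sigma$ (yielding $\Pi_{\ker D}$) and the remaining piece $\gamma_\sigma$, then import the Lafforgue-algebra estimates from \cite[Prop.~2.9]{PP2} and Proposition~\ref{prop:large-no-boundary-unperturbed} for the two pieces, and finish the small-time tail with \cite[Prop.~5.29]{PPST}. One small improvement in exposition on your part: you state explicitly that $D\Pi_{\ker D}=0$ kills the projection contribution to the eta integrand, which the paper leaves implicit when it passes from the heat-kernel decomposition \eqref{gap-decomposition-heat} to the convergence of $\int_1^\infty\tau^X_g(D\exp(-tD^2))dt/\sqrt{t}$.
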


\begin{proposition}\label{prop:eta-conv-gap-bis}
 Let $(X,\mathbf{h})$ be an odd dimensional $G$-proper manifold and let 
 $D$ be a $G$-equivariant Dirac operator. 
If  $0$ is isolated in the $L^2$-spectrum of $D$ and $\Theta>0$ is not in the $L^2$-spectrum of $D$ then 
\begin{equation}\label{eta-theta}
\eta_g (\Theta):={\rm LIM}_{s\downarrow 0} \frac{1}{\sqrt{\pi}} \int_s^\infty \tau^{N}_{g}((D_\Theta) \exp (-t(D +\Theta)^2)) \frac{dt}{\sqrt{t}}
\end{equation}
is well-defined. Moreover  if $\Theta>0$ belongs to the gap around $0$, then 
$\lim_{\Theta\downarrow 0} \eta_g (\Theta)$ exists and we have
\begin{equation}\label{limit-eta}
\lim_{\Theta\downarrow 0} \eta_g (\Theta)=\eta_g (D) + \tau^{X}_g (\Pi_{\ker D}). 
\end{equation}

\begin{proof}
We shall adapt to the present context an argument due to Melrose, see \cite[Prop. 8.38]{Melrose-Book}. Let us break 
\[
\eta_g(\Theta)  = {\rm LIM}_{s\downarrow 0}\frac{1}{\sqrt{\pi}} \int_s^1\tau^{X}_{g}((D_\Theta ) \exp (-sD_\Theta^2)) \frac{dt}{\sqrt{t}} + \frac{1}{\sqrt{\pi}} \int_1^\infty \tau^{X}_{g}((D_\Theta ) \exp (-sD_\Theta^2)) \frac{dt}{\sqrt{t}}. 
\]
Because $D_\Theta$ is $L^2$-invertible, we know that the second summand is finite. For the first summand, we need to take the regularized limit,  ${\rm LIM}_{s\downarrow 0} $,  because $D_\Theta$ is a Dirac-type operator but it is not associated to a Clifford connection. In particular, $\eta_g (\Theta):=\eta_g (D_\Theta)$. 

Recall that the heat kernel and its asymptotic expansion for small time, are $C^1$ in $\Theta$, for finite times. Thus,  we have that.
\begin{equation}
\label{s1 equ}
\lim_{\Theta\downarrow 0} {\rm LIM}_{s\downarrow 0} \frac{1}{\sqrt{\pi}} \int_s^1 \tau^{X}_{g}(D_\Theta \exp (-tD_\Theta^2)) \frac{dt}{\sqrt{t}} =  {\rm LIM}_{s\downarrow 0} \frac{1}{\sqrt{\pi}} \int_s^1 \tau^{X}_{g}(D \exp (-tD^2)) \frac{dt}{\sqrt{t}}. 
\end{equation}
Again by \cite[Proposition 5.29]{PPST}, the righthand side is equal to
\[
\frac{1}{\sqrt{\pi}} \int_0^1 \tau^{X}_{g}(D \exp (-tD^2)) \frac{dt}{\sqrt{t}} < +\infty. 
\]
On the other hand, by the tracial property of $\tau_g^X$ and Duhamel's principle, we compute that 
\[
\begin{aligned}
&\frac{d}{d\Theta} \left( \int_1^T  \tau^{X}_{g}\left(D_\Theta \exp(-tD_\Theta^2)\right) \frac{dt}{\sqrt{t}} \right)\\
=&\int_1^T  \tau^{X}_{g}\left[ \left(1 - 2t \cdot D_\Theta^2 \right)\exp(-tD_\Theta^2)\right] \frac{dt}{\sqrt{t}} \\
=& 2 T^{1/2}  \tau^{X}_{g}\left(  \exp (-TD_\Theta^2) \right)
- 2 \tau^{X}_{g}\left(\exp (-D_\Theta^2) \right). 
\end{aligned}
\]
Changing $\Theta$ in $\phi$ and integrating in $\phi$, from $\phi=0$ to $\phi=\Theta$, we obtain:
\begin{equation}
\label{int for}	
\begin{aligned}
&\int_1^T  \tau^{X}_{g}\left(D_\Theta \exp(-tD_\Theta^2)\right) \frac{dt}{\sqrt{t}}\\
 =&\int_1^T  \tau^{X}_{g}\left(D\exp(-tD^2)\right) \frac{dt}{\sqrt{t}} +2 \int_0^\Theta T^{1/2}  \tau^{X}_{g}\left( \exp (-TD_\phi^2)\right) d\phi 
- 2 \int_0^\Theta  \tau^{X}_{g}\left(\exp (-D_\phi^2)\right) d \phi.
\end{aligned}
\end{equation}
We want to take the limit as $T \to +\infty$ and then $\Theta \downarrow 0$. By Lemma \ref{lemma:eta-gap} the first summand on the righthand side converges to
\[
\int_1^\infty \tau^{X}_{g}\left(D\exp(-tD^2)\right) \frac{dt}{\sqrt{t}} < +\infty, 
\]
and it is obviously  independent of $\Theta$. 
 It is clear that 
\[
\lim_{\Theta \downarrow 0} \int_0^\Theta  \tau^{X}_{g}\left(\exp (-D_\phi^2)\right) d \phi = 0. 
\]
The difficult term is 
\[
\lim_{\Theta \downarrow 0} \lim_{T \to +\infty}  \int_0^\Theta T^{1/2}  \tau^{X}_{g}\left( \exp (-TD_\phi^2)\right) d\phi. 
\]
Note that $D_\phi$ is a lower order invertible perturbation of $D$. For $\phi>0$ small, 
 write 
\[
\exp (-TD_\phi^2)= \exp \left(-T (D^2 + 2\phi D)\right) \exp (-T\phi^2).
\]
Here the operator $D^2 + 2\phi D$ is a generalized laplacian. Because $0$ is isolated in the spectrum of $D$, we can choose $\Theta$ small enough such that for each $\phi\in [0,\Theta]$, $0$ is also isolated in the spectrum of $D^2 + 2\phi D$. In particular,  
\[
\ker (D^2 + 2\phi D)= \ker (D).
\] 
By the decomposition of the heat kernel,  
\begin{equation}
\label{R term}
 \exp \left(-T (D^2 + 2\phi D)\right)= \Pi_{\ker D}+ R(T,\phi)
\end{equation}
with $R(T,\phi)$ going to 0 exponentially in $\Psi^{-\infty}_G (X)$.  We conclude that 
\begin{equation}
\label{limit Dirac}
\begin{aligned}
&\lim_{T\to +\infty}\int_0^\Theta T^{1/2}  \tau^{X}_{g}\left( \exp (-TD_\phi^2)\right) d\phi\\
=&\lim_{T\to +\infty} \int_0^\Theta T^{1/2}  e^{-T\phi^2} \cdot \tau^{X}_{g}\left( \exp\left(-T(D^2+2\phi D)\right)\right) d\phi\\
=&\lim_{T\to +\infty} \int_0^\Theta T^{1/2}  e^{-T\phi^2} \cdot \tau^{X}_{g}\left(  \Pi_{\ker D}\right) d\phi\\
 =& \frac{\sqrt{\pi}}{2} \cdot \tau^{X}_{g}\left(  \Pi_{\ker D}\right). 
\end{aligned}
\end{equation}
Combining the above computation, we conclude that 
\[
\lim_{\Theta\downarrow 0} \eta_g(\Theta) =  \frac{1}{\sqrt{\pi}}  \int_0^\infty \tau^{X}_{g}((D) \exp (-sD^2)) \frac{dt}{\sqrt{t}}+\tau^{X}_{g}\left(  \Pi_{\ker D}\right) = \eta_g (D) + \tau^{X}_g (\Pi_{\ker D}). 
\]
\end{proof}
\end{proposition}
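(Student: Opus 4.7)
The well-definedness of $\eta_g(\Theta)$ in \eqref{eta-theta} splits naturally into a small-time and a large-time analysis. For the large-time piece $\int_1^\infty \tau^X_g(D_\Theta\exp(-tD_\Theta^2))\,dt/\sqrt{t}$, the operator $D_\Theta$ is $L^2$-invertible by hypothesis, and $D_\Theta=D+\Theta$ differs from the Clifford-connection Dirac operator $D$ only by a $G$-equivariant bundle endomorphism (hence an element of $\mathcal{L}^c_G(X)$ once cut off appropriately), so Proposition~\ref{prop:large-no-boundary-unperturbed} applied to the perturbation $C=\Theta$ (or more precisely its localization, arguing with the Volterra series as in Section~\ref{section:heat}) shows $D_\Theta\exp(-tD_\Theta^2)\to 0$ rapidly in $\mathcal{L}^\infty_{G,s}(X)$, so the integral converges absolutely. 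For the small-time piece, the key point is that $D_\Theta$ is still a Dirac-type operator (just not Clifford-compatible), so the local heat asymptotics together with the Bismut-type fixed-point localization argument already carried out in \cite[Lemma 5.28 and Proposition 5.25]{PPST} produce a genuine asymptotic expansion in $\sqrt{t}$ of the integrand, making the regularized integral $\mathrm{LIM}_{s\downarrow 0}$ a well-posed finite number.

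For the limit formula \eqref{limit-eta}, my plan is to adapt Melrose's trick from \cite[Prop.~8.38]{Melrose-Book}. I will differentiate $\int_1^T\tau^X_g(D_\Theta\exp(-tD_\Theta^2))\,dt/\sqrt{t}$ with respect to $\Theta$: using the tracial property of $\tau_g^X$ and Duhamel, the integrand becomes $\tau^X_g\bigl((1-2tD_\Theta^2)\exp(-tD_\Theta^2)\bigr)/\sqrt{t}$, which is a total $t$-derivative of $2\sqrt{t}\,\tau^X_g(\exp(-tD_\Theta^2))$. Integrating in $t$ from $1$ to $T$ and then integrating back in $\Theta$ from $0$ to the fixed value gives the identity
\[
\int_1^T\tau^X_g(D_\Theta e^{-tD_\Theta^2})\tfrac{dt}{\sqrt t}
=\int_1^T\tau^X_g(D e^{-tD^2})\tfrac{dt}{\sqrt t}
+2\int_0^\Theta\!\bigl[\sqrt{T}\,\tau^X_g(e^{-TD_\phi^2})-\tau^X_g(e^{-D_\phi^2})\bigr]d\phi.
\]
The $t$-integral on the right converges as $T\to\infty$ by the gap version of Lemma~\ref{lemma:eta-gap}, and the boundary term $\int_0^\Theta\tau^X_g(e^{-D_\phi^2})\,d\phi$ is continuous in $\Theta$ and vanishes as $\Theta\downarrow 0$.

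The decisive step is therefore the double limit $\lim_{\Theta\downarrow 0}\lim_{T\to\infty}\int_0^\Theta\sqrt{T}\,\tau^X_g(e^{-TD_\phi^2})\,d\phi$. Here I use that $D_\phi^2=D^2+2\phi D+\phi^2$ with all three summands commuting, so $e^{-TD_\phi^2}=e^{-T\phi^2}e^{-T(D^2+2\phi D)}$; since $0$ is isolated in the $L^2$-spectrum of $D$, for $\phi$ in a small enough interval the perturbed generalized Laplacian $D^2+2\phi D$ retains $\ker D$ as its kernel and still has a spectral gap, so by the contour-integral argument used in Lemma~\ref{lemma:eta-gap} we get $e^{-T(D^2+2\phi D)}=\Pi_{\ker D}+R(T,\phi)$ with $R(T,\phi)\to 0$ exponentially in $\mathcal{L}^\infty_{G,s}(X)$, uniformly for $\phi$ in a neighborhood of $0$. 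Applying the trace $\tau^X_g$ (finite on $\Pi_{\ker D}$ by Lemma~\ref{lemma:eta-gap}) and performing the Gaussian change of variables $u=\sqrt{T}\phi$ gives $\int_0^\Theta\sqrt{T}\,e^{-T\phi^2}\,d\phi=\int_0^{\sqrt{T}\Theta}e^{-u^2}\,du\to\sqrt{\pi}/2$, while the $R(T,\phi)$-contribution is killed by the exponential decay beating the $\sqrt{T}$ growth. Assembling these pieces and dividing by $\sqrt{\pi}$ yields \eqref{limit-eta}.

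The main obstacle is the uniform spectral statement: controlling the exponential convergence $e^{-T(D^2+2\phi D)}\to\Pi_{\ker D}$ in the Lafforgue Fréchet topology uniformly for $\phi$ in a small neighborhood of $0$. This is where the large-time heat-kernel analysis developed in Section~\ref{section:heat}, and in particular the contour-integral representation used to prove Proposition~\ref{prop:large-no-boundary-unperturbed}, must be applied with the path $\gamma$ chosen to simultaneously avoid $\mathrm{Spec}(D^2+2\phi D)\setminus\{0\}$ for all sufficiently small $\phi$; once this is in place, the remaining manipulations are essentially bookkeeping around Duhamel and the Gaussian integral.
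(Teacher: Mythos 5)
Your plan is correct and follows essentially the same route as the paper's proof: the same decomposition into small- and large-time pieces, the same Melrose-style differentiation in $\Theta$ and Duhamel/cyclicity computation recognising the integrand as a total $t$-derivative, the same factorisation $e^{-TD_\phi^2}=e^{-T\phi^2}e^{-T(D^2+2\phi D)}$ with spectral-gap decomposition $e^{-T(D^2+2\phi D)}=\Pi_{\ker D}+R(T,\phi)$, and the same Gaussian integral $\int_0^\Theta\sqrt{T}e^{-T\phi^2}\,d\phi\to\sqrt{\pi}/2$. One small caveat: $\Theta\cdot\mathrm{Id}$ is a bounded bundle endomorphism, not a $G$-compactly supported smoothing operator, so Proposition \ref{prop:large-no-boundary-unperturbed} does not literally apply to $C=\Theta$; the large-time decay and convergence of $\int_1^\infty$ should instead be justified by the contour-integral argument of Lemma \ref{lemma:eta-gap} applied to $D_\Theta$, which is $L^2$-invertible — this is also what the paper implicitly does.
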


\subsection{The delocalized APS index theorem in the gap case}\label{sect:non-invertible-aps}
We  discuss an extension of the 0-degree delocalized APS index theorem to the case in which the 
boundary operator has a spectral gap.

We assume that
\begin{equation}\label{gap}
\exists\; a >0\;\;\text{such that}\;\; {\rm spec}_{L^2} (D_\partial)\cap (-a,a)=\{0\}
\end{equation}
Let $D$ be  the $b$-Dirac operator on $Y$ associated to $D_0$ on $Y_0$, a  $\ZZ_2$-graded odd  Dirac operator, product-type near the
boundary.
Fix $\Theta\in (0,a)$ and consider the $\ZZ_2$-graded odd $b$-operator given by 
$$D^+ _\Theta:=x^{-\Theta} D^+ x^\Theta\,,\quad D^- _\Theta:=(x^{-\Theta} D^+ x^\Theta)^*= x^{\Theta} D^- x^{-\Theta}\,.$$
This is a $0$-th order perturbation of $D$; indeed 
\begin{equation}\label{total-perturb-gap}
D^+ _\Theta\equiv  D^+  + x^{-\Theta} [D,x^{\Theta}]= D^+ +x^{-\Theta}{\rm cl} (d( x^{\Theta}))\,,
\end{equation}
and similarly for $D^- _\Theta$.
It is known that 
\begin{equation}\label{total-perturb-gap-bdry}
(D^\pm_\Theta)_{\partial}= D_{\partial}+ \Theta\,;
\end{equation}
as we shall now explain. By \cite[Prop. 5.8]{Melrose-Book}, 
$$(x^{-\Theta} D^+  x^{\Theta})_{\partial}= I (D^+, -i  \Theta)$$
and it is well known  that  $I (D^+, -i \Theta)= i (-i \Theta)+D_\partial$ which is $ \Theta+ D_\partial $;
a similar computation holds for $D^- _\Theta$:
$$(x^{\Theta} D^-  x^{-\Theta})_{\partial}= I (D^-, i \Theta);$$
moreover   $I (D^-, i \Theta)= -i (i \Theta)+D_\partial$ 
which is again equal to $D_{\partial}+ \Theta$ as claimed.
This means that  $D_\Theta$ has an invertible boundary operator, provided $\Theta\in (0,a)$.
We {\bf define} the index class in the gap case, as the index class associated to  $D_\Theta$. 
This does not depend on $\Theta$, for $\Theta\in (0,a)$.There is a well defined delocalized APS numeric index given by $\langle \Ind_\infty (D_\Theta),\tau^{Y}_{g}\rangle$.
Proceeding 
as before, using in particular  the relative index class, excision and the relative cyclic cocycle 
$(\tau^{Y,r}_{g},\sigma^{\partial Y}_{g})$ we find that for each $s>0$
\begin{align*}\langle \Ind_\infty (D_\Theta),\tau^{Y}_{g}\rangle &=  \tau^{Y,r}_{g} (e^{- s^2 (D^- _\Theta D^+ _\Theta)})-
 \tau^{Y,r}_{g} (e^{- s^2 (D^+ _\Theta D^- _\Theta))})\\&-\frac{1}{2\sqrt{\pi}} \int_s^\infty \tau^{\partial Y}_{g}((D_{\partial }+\Theta) \exp (-s(D_\partial + \Theta)^2)) \frac{dt}{\sqrt{t}}.
\end{align*}
Proceeding as for the eta integrand for  $D_X+\Theta$, c.f.
Definition \ref{def:perturbed Dirac} and Proposition \ref{prop:eta-conv-gap-bis}, we can use 
the short time 
analysis of the heat kernel on the $b$-manifold $(Y,\mathbf{h})$
in order to prove that 
$$ {\rm LIM}_{s\downarrow 0}\left(  \tau^{Y,r}_{g} (e^{- s^2 (D^- _\Theta D^+ _\Theta)})-
 \tau^{Y,r}_{g} (e^{- s^2 (D^+ _\Theta D^- _\Theta)})\right)
 $$ is well defined.
Thus,  adapting  to our context the arguments in Melrose \cite[Chapter 9]{Melrose-Book}, we have:
\begin{align*}\langle \Ind_\infty (D_\Theta),\tau^{Y}_{g}\rangle &= {\rm LIM}_{s\downarrow 0}\left(  \tau^{Y,r}_{g} (e^{- s^2 (D^- _\Theta D^+ _\Theta)})-
 \tau^{Y,r}_{g} (e^{- s^2 (D^+ _\Theta D^- _\Theta)})\right)\\&-
{\rm LIM}_{s\downarrow 0} \frac{1}{2\sqrt{\pi}} \int_s^\infty \tau^{\partial Y}_{g}((D_{\partial }+\Theta) \exp (-s(D_\partial +\Theta)^2)) \frac{dt}{\sqrt{t}}\\
&= {\rm LIM}_{s\downarrow 0}\left(  \tau^{Y,r}_{g} (e^{- s^2 (D^- _\Theta D^+ _\Theta)})-
\tau^{Y,r}_{g} (e^{- s^2 (D^+ _\Theta D^- _\Theta)})\right)
- \eta_g (\Theta)
\end{align*}
 where the definition of $\eta_g (\Theta)$, the modified eta invariant
associated to $D_{\partial}+\Theta$, has been used. See Definition \ref{def:modified}. Now, the first summand in the right hand side of the above equation depends 
continuously on $\Theta\in (0,a)$
and has a limit as $\Theta\downarrow 0$ equal to
$$\int_{Y_0^g} c_0^g {\rm AS}_g (D_0)\,.$$
See \eqref{main-0-degree}. 
For the second summand we recall Proposition \ref{prop:eta-conv-gap-bis}:
\[
\lim_{\Theta\downarrow 0} \eta (\Theta)=\eta_g (D_{\partial}) + \tau^{\partial Y}_g (\Pi_{\ker D_{\partial}}).
\]

\noindent
We can thus conclude that the following theorem holds:

\begin{theorem}\label{gap-theo}
Let $G$ be a connected, linear real reductive group and let $g\in G$ be a semisimple element.
Let $(Y_0,\mathbf{h}_0)$ be an even dimensional  cocompact $G$-proper manifold with boundary, with metric which is product
type near the boundary.
Let $D_0$ be a $G$-equivariant Dirac operator, product type near the boundary. We assume that the associated
operator on $\partial Y$ satisfies  
$$
\exists\; a >0\;\;\text{such that}\;\; {\rm spec}_{L^2} (D_\partial)\cap (-a,a)=\{0\}.
$$
Let  $(Y,\mathbf{h})$ be the $b$-manifold associated to $(Y_0,\mathbf{h}_0)$ and let $D$ be the  $b$-operator associated to $D_0$. Then for $\Theta\in (0,a)$  there exists a well defined smooth index class 
$\Ind_\infty (D_\Theta)\in K_0 (\mathcal{L}^\infty_{G, s}(Y,E))=K_0 (C^* (Y_0\subset Y, E)^G)$ which is independent of 
$\Theta\in (0,a)$ and denoted briefly $\Ind_\infty (D)$.
For the pairing of this index class  with the 0-degree cyclic cocycle $\tau_g^Y$ the following formula holds:
\[
\langle \Ind_\infty (D),\tau^{Y}_{g}\rangle =
\int_{(Y_0)^g} c^g {\rm AS}_g (D_0)-\frac{1}{2}(\eta_g (D_\partial) + \tau^{\partial Y}_g (\Pi_{\ker D_\partial})).
\]

\end{theorem}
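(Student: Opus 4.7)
The plan is to reduce the non-invertible boundary case to the invertible one via Melrose's conjugation trick. Define the $b$-operator $D_\Theta^+ := x^{-\Theta} D^+ x^\Theta$ and $D_\Theta^- := x^\Theta D^- x^{-\Theta}$. The identity $D_\Theta^+ = D^+ + x^{-\Theta}\,\mathrm{cl}(d(x^\Theta))$ shows that $D_\Theta$ is a $0$th-order perturbation of $D$ within the $b$-calculus, and a direct computation with Melrose's indicial calculus yields $I(D_\Theta^+,\lambda) = i\lambda + \Theta + D_\partial$, so $(D_\Theta)_\partial = D_\partial + \Theta$. For $\Theta \in (0,a)$ this boundary operator is $L^2$-invertible, so the machinery recalled in Section \ref{sect:preliminaries} produces a smooth index class $\Ind_\infty(D_\Theta) \in K_0(\mathcal{L}^\infty_{G,s}(Y)) \cong K_0(C^*(Y_0\subset Y,E)^G)$. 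Independence from $\Theta \in (0,a)$ is a standard homotopy argument: the family $\{D_\Theta\}$ is continuous in the $b$-calculus and the boundary operators remain invertible throughout $(0,a)$, so homotopy invariance of the Connes--Skandalis index class applies.

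To prove the index formula, I would pass to the relative $K$-theory description with the relative cyclic $0$-cocycle $(\tau^{Y,r}_g, \sigma^{\partial Y}_g)$ and the relative Connes--Moscovici representative, exactly as in the proof of Theorem \ref{theorem-aps-ind-perturbed}. The large-time behaviour of the perturbed Connes--Moscovici projector and the eta-type identification from Lemma \ref{lemma-key} then give, for each $s > 0$,
\[
\langle \Ind_\infty(D_\Theta), \tau^Y_g\rangle = \bigl(\tau^{Y,r}_g(e^{-s^2 D^-_\Theta D^+_\Theta}) - \tau^{Y,r}_g(e^{-s^2 D^+_\Theta D^-_\Theta})\bigr) - \frac{1}{2\sqrt{\pi}}\int_s^\infty \tau^{\partial Y}_g\bigl((D_\partial+\Theta)e^{-t(D_\partial+\Theta)^2}\bigr)\frac{dt}{\sqrt{t}}.
\]
Since the left-hand side is independent of $s$, I would take $\mathrm{LIM}_{s\downarrow 0}$ on the right using the $b$-calculus short-time heat expansion applied to the perturbed operator; the regularized boundary integral is then, by Definition \ref{def:modified}, precisely $\eta_g(\Theta)$.

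Finally, I would take $\Theta \downarrow 0$. The left-hand side is $\Theta$-independent. The regularized supertrace piece on the right depends continuously on $\Theta$ and reduces at $\Theta = 0$ to the local term $\int_{(Y_0)^g} c^g\,\mathrm{AS}_g(D_0)$ of Theorem \ref{intro:0-delocalized-aps}. The boundary term is handled by Proposition \ref{prop:eta-conv-gap-bis}, which gives $\lim_{\Theta\downarrow 0}\eta_g(\Theta) = \eta_g(D_\partial) + \tau^{\partial Y}_g(\Pi_{\ker D_\partial})$, the kernel projection trace being finite thanks to the spectral gap (cf.\ Lemma \ref{lemma:eta-gap}). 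The main obstacle is the careful treatment of the two regularized limits: the short-time $b$-heat asymptotics must be extended to the perturbed operator $D_\Theta$, whose underlying connection is no longer Clifford, and the $\Theta\downarrow 0$ limit of the supertrace piece must be shown to reduce continuously to the Clifford local integrand rather than merely as a regularized limit. This relies on the Volterra-series analysis of Section \ref{section:heat} applied in the $b$-setting, together with the local model near the fixed-point submanifold $(Y_0)^g$ developed in \cite{PPST}.
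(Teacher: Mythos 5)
Your proposal is correct and follows essentially the same route as the paper: Melrose's conjugation $D_\Theta^\pm = x^{\mp\Theta}D^\pm x^{\pm\Theta}$ (a $0$th-order $b$-perturbation) rendering the boundary operator $D_\partial+\Theta$ invertible for $\Theta\in(0,a)$, passage to the relative pairing with $(\tau^{Y,r}_g,\sigma^{\partial Y}_g)$ combined with the $s$-scaling argument, identification of the regularized boundary term with $\eta_g(\Theta)$, and the final $\Theta\downarrow 0$ limit via Proposition \ref{prop:eta-conv-gap-bis}. The one point where you are slightly more explicit than the text is in invoking Lemma \ref{lemma-key} for the $\sigma$-integral identification; in the gap case the perturbation $R(\lambda)\equiv\Theta$ is constant, so $\partial_\lambda B = 0$ and the identification collapses to the unperturbed computation, which is why the paper simply writes "proceeding as before" rather than citing that lemma.
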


\section{
Large and short time behaviour 
of the heat kernel: proofs }\label{sect:proofs-heat}
In this section we collect technical proofs on the large and short time behaviour 
of the heat kernel of a $L^2$-invertible perturbed Dirac operator $D+C$. While all the results are expected, the proofs turn out to be rather intricate, which is why we have collected separately.

\subsection{Large time behaviour on $G$-proper manifolds without boundary.}
We are interested in  the large time behaviour of the heat operator $\exp (-t(D+C)^2)$ and of the Connes-Moscovici projector $V(t(D+C))$ as elements in the Fr\'echet algebra  $\mathcal{L}^\infty_{G, s} (X)$ with $X$ a cocompact $G$-proper manifold without boundary.

Let $\gamma$ be a suitable path in the complex plane, missing the spectrum of $(D+C)^2$. For example, we can take $\gamma$ to be the path given by the union of the two straight half-lines $$ {\rm Im}z=\pm m {\rm Re}z + b$$
in the half-plane ${\rm Re}z>0$; here $m={\rm tg} \phi>0$, $m$ small, and $b>0$ is smaller than the bottom of the spectrum of $(D+C)^2$. Recall (\ref{half-lines})
\[
\ell^\pm (m,b):= \{z\in\CC\,|\, {\rm Im}z=\pm m {\rm Re}z + b\}.
\]

\medskip
\noindent
We shall consider the functional calculus associated to certain functions. On these functions $f$  we
make the following assumption (Assumption \ref{function-as-CM}):

\begin{assumption}
$f$ is an analytic function on $[0, \infty)$ that is of rapid decay, i.e. $\forall m,n$, there is an constant $M_{m,n}>0$ such that $$\operatorname{max}_{x\in [0, \infty)}|x^m \frac{\partial^n f(x)}{\partial x^n}|< M_{m,n}$$ for all $x\in [0, \infty)\,.$
\end{assumption}

\noindent
We start with the following technical but elementary  lemma.
\begin{lemma}\label{lem:schwartzfunction} Let $f$ satisfy Assumption \ref{function-as-CM}. There is an analytic function $g$ on $[0, \infty)$ of rapid decay such that $f=g'$. 
\end{lemma}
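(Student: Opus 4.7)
The plan is to define $g$ by the obvious antiderivative, namely
\[
g(x) := -\int_x^{\infty} f(t)\, dt,
\]
which is well-defined because the rapid decay hypothesis in Assumption \ref{function-as-CM} guarantees that $|f(t)| \le M_{2,0}\,t^{-2}$ for $t \ge 1$, so the integral converges absolutely. By the fundamental theorem of calculus, $g'(x) = f(x)$ on $[0,\infty)$, so it remains to check that $g$ is analytic and of rapid decay.

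For analyticity, one notes that $f$ is analytic on $[0,\infty)$, so $g$, being an antiderivative of $f$, is analytic on $[0,\infty)$ as well (alternatively, one can expand $f$ in a convergent power series near each $x_0 \ge 0$ and integrate term-by-term on an overlapping neighborhood of $[0,\infty)$ in $\mathbb{C}$).

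For rapid decay of the higher derivatives, the key observation is that $g^{(n)}(x) = f^{(n-1)}(x)$ for every $n \ge 1$, so the bounds on $g^{(n)}$ follow directly from the bounds $M_{m,n-1}$ on $f^{(n-1)}$.

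The only step that requires a small computation is rapid decay of $g$ itself. For any $m \ge 0$, using $|f(t)| \le M_{m+2,0}\, t^{-(m+2)}$ for $t \ge 1$, we get
\[
|g(x)| \le \int_x^{\infty} |f(t)|\, dt \le M_{m+2,0}\int_x^{\infty} t^{-(m+2)}\, dt = \frac{M_{m+2,0}}{(m+1)}\, x^{-(m+1)}
\]
for $x \ge 1$, which yields $x^m |g(x)| \le M_{m+2,0}/(m+1)$ on $[1,\infty)$. On $[0,1]$, $g$ is continuous and hence bounded, so combining the two estimates we obtain $\sup_{x \in [0,\infty)} x^m|g(x)| < \infty$ for every $m$, completing the rapid decay check. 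I do not expect any real obstacle here; the only care needed is to bound $g$ on a neighborhood of $0$ separately from the tail estimate used for large $x$.
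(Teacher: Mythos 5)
Your proof is correct and follows essentially the same route as the paper's: both define $g$ as the antiderivative normalized so that $g(x) = -\int_x^\infty f(t)\,dt$, reduce the decay of $g^{(n)}$ for $n\ge 1$ to that of $f^{(n-1)}$, and bound $x^m g(x)$ by integrating the tail estimate $|f(t)|\lesssim t^{-(m+2)}$. If anything you are slightly more careful than the paper, which (harmlessly) drops a sign in writing $g(x)=\int_x^\infty f$ and leaves the boundedness on the compact piece implicit.
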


\begin{proof} Consider $g_0(x)=\int_0^x f(t){\rm d}t$. As $f(t)$ is of rapid decay, the integral $\int_0^\infty f(t){\rm d}t$ is finite.  Set $g(x)=g_0(x)-\int_0^\infty f(t){\rm d}t$.  Notice that $\frac{\partial^n g(x)}{\partial x^n}=\frac{\partial ^{n-1} f(x)}{\partial x^{n-1}}$ for $n\geq 1$. To prove the rapid property of $g$, it suffices to check $\operatorname{max}_{x\in [0, \infty]}|x^m g(x)|< C_m$ for some $C_m$. We prove this by observing that there exists $N>0$ such that when $x>N$, $|f(x)|<\frac{M_m}{x^{m+1}}$, and 
\[
x^mg(x)=x^m \int_x^\infty f(t){\rm d}t. 
\]  
When $t>N$, $f(t)$ is bounded by $\frac{M_m}{x^{m+1}}$ and 
\[
|x^m g(x)|\leq x^m \int_x^\infty |f(t)|{\rm d}t \leq x^m \int_x ^\infty \frac{M_m}{t^{m+1}}{\rm dt}\leq \frac{M_m}{m}. 
\]
\end{proof}

\noindent
For a fixed function $f$ satisfying Assumption \ref{function-as-CM}, we choose $\gamma$ a suitable path in the domain of $f$ missing the spectrum of $-(D+C)^2$. As already remarked  both $e^{-z^2}$ and $e^{-\frac{z}{2}}\frac{1-e^{-z}}{z}$ satisfy the 
assumptions of the Lemma and for these we can fix the $\gamma$ as above.

\noindent
Write now 
$$
f(-tD^2)=\frac{1}{2\pi i}\int_\gamma f(-t\lambda^2) (D^2-\lambda)^{-1}{\rm d}\lambda. 
$$
Applying Lemma \ref{lem:schwartzfunction} and integration by parts, we
choose $k$ very large, larger than, say,  $2(\ell+j)$ and obtain the following expression for $f(-tD^2)$,
\[
f(-tD^2)=\frac{(k-1)!}{2\pi i}t^{1-k}  \int_{\gamma} g(-t\lambda^2) (D^2-\lambda)^{-k}{\rm d}\lambda.
\]  

\noindent
To study the large $t$ behaviour of $f(-tD^2)$, we start with the following proposition.  
\begin{proposition}\label{prop:inverse-perturbed}
If the operator $D+C$, $C\in \mathcal{L}^c_{G} (X)$,
is $L^2$-invertible, then 
 $$(D+C)^{-1}=B +  A,\quad\text{with}\quad B\in \Psi^{-1}_{G,c}(X), \;\;A \in \mathcal{L}^\infty_{G, s} (X)$$
 \end{proposition}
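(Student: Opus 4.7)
The plan is to reduce the statement to controlling the residual correction to a symbolic parametrix, and then to identify that correction as a heat-semigroup integral that converges in $\mathcal{L}^\infty_{G,s}(X)$ thanks to the spectral-gap hypothesis on $D+C$.

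First I would construct a $G$-equivariant, properly supported parametrix $B \in \Psi^{-1}_{G,c}(X)$ for $D+C$. Since $C$ is smoothing, $D+C$ has the same principal symbol as $D$, and a symbolic parametrix for $D$---built, via the identification $X \cong G \times_K S$ and the standard pseudodifferential calculus on the slice $S$---works equally well for $D+C$. This yields $(D+C)B = I - R$ with $R \in \Psi^{-\infty}_{G,c}(X) \subset \mathcal{L}^c_G(X)$. Composing with $(D+C)^{-1}$ on the left gives $(D+C)^{-1} = B + (D+C)^{-1} R$, so it suffices to show that $A := (D+C)^{-1} R$ lies in $\mathcal{L}^\infty_{G,s}(X)$.

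Next I would use the spectral-gap hypothesis. Since $\mathrm{spec}\bigl((D+C)^2\bigr) \subset [a^2, \infty)$ for some $a>0$, the identity $\bigl((D+C)^2\bigr)^{-1} = \int_0^\infty e^{-t(D+C)^2} \, dt$ holds in operator norm. Because the heat semigroup commutes with $D+C$, this yields
\[
A \;=\; \int_0^\infty e^{-t(D+C)^2}\, S\, dt, \qquad S := (D+C) R.
\]
The splitting $D = D_{\rm split} + R_0$ together with the argument in \cite[Lemma 4.17]{PPST} shows that $DR \in \mathcal{L}^c_G(X)$; since $CR \in \mathcal{L}^c_G(X)$ is immediate, we obtain $S \in \mathcal{L}^c_G(X)$.

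The main obstacle is then to prove that the integral $\int_0^\infty e^{-t(D+C)^2} S \, dt$ converges in the Fr\'echet topology of $\mathcal{L}^\infty_{G,s}(X)$. Near $t=0$, Proposition \ref{prop:small-time-heat}---whose proof is independent of the present statement---gives $e^{-t(D+C)^2} S \to S$ in $\mathcal{L}^\infty_{G,s}(X)$, so the integrand is uniformly bounded on $(0,1]$. For large $t$ I would combine the contour representation $e^{-t(D+C)^2} = \frac{1}{2\pi i} \int_\gamma e^{-t\mu} \bigl((D+C)^2 - \mu\bigr)^{-1}\, d\mu$, with $\gamma$ chosen as in \eqref{half-lines} to the left of $\mathrm{spec}((D+C)^2)$, with Combes--Thomas style exponential off-diagonal estimates for the resolvent kernel that hold uniformly in $\mu$ on $\gamma$; transporting these estimates through $X \cong G \times_K S$ to decay in the $G$-variable controlled by the Harish-Chandra $\Xi$-function should yield decay of the form $e^{-\alpha t}$ with $\alpha>0$ in every continuous seminorm of $\mathcal{L}^\infty_{G,s}(X)$, making the integral absolutely convergent. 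Establishing this uniform Combes--Thomas control at the level of the Lafforgue seminorms is where the real technical work lies.
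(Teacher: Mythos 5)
Your reduction is set up correctly: the symbolic parametrix gives $(D+C)B = 1-R$ with $R\in\Psi^{-\infty}_{G,c}(X)\subset\mathcal{L}^c_G(X)$, and the identity $(D+C)^{-1}=B+(D+C)^{-1}R$ together with $(D+C)^{-1}R=\int_0^\infty e^{-t(D+C)^2}S\,dt$, $S=(D+C)R\in\mathcal{L}^c_G(X)$, is a legitimate way to isolate the residual piece. The small-time control via Proposition \ref{prop:small-time-heat} is also sound.

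The problem is the large-$t$ step, and you have flagged it yourself as the "real technical work," which means you have not actually closed the argument. Worse, the step as sketched is on a collision course with the logical architecture of the paper: the Fr\'echet-topology large-time decay of $f(-t(D+C)^2)$ (Proposition \ref{prop:large-no-boundary-unperturbed}) is, in the paper, proved \emph{after} and \emph{using} Proposition \ref{prop:inverse-perturbed}, precisely because one needs to understand the structure of the perturbed resolvent $((D+C)^2-\mu)^{-1}$ modulo $\mathcal{L}^\infty_{G,s}(X)$ before one can run the contour-integral estimates at the level of the Lafforgue seminorms. So invoking that decay here is circular, and replacing it by a "Combes--Thomas style" estimate is not a routine substitute: Combes--Thomas gives weighted $L^2$ exponential off-diagonal decay of the resolvent kernel, whereas the seminorms of $\mathcal{L}^\infty_{G,s}(X)$ require uniform control of all $K\times K$-invariant derivatives in the slice variable together with the $\Xi(g)(1+\|g\|)^s$ weight in the group variable; upgrading a Combes--Thomas bound to this class is a substantial project you have not carried out, not a matter of "transporting" estimates.

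The paper avoids the hard step entirely. Instead of evaluating the full integral $\int_0^\infty$, it uses the truncated correction $B' := -\int_\delta^N (D+C)\exp(-t(D+C)^2)\,dt\circ K$ (a Bochner integral over a compact interval, manifestly in $\mathcal{L}^\infty_{G,s}(X)$), computes
\[
(D+C)(B-B') = 1-\bigl(K-e^{-\delta(D+C)^2}K\bigr)-e^{-N(D+C)^2}K,
\]
and then uses only \emph{operator-norm} smallness of the remainder (from the elementary Lemma \ref{L2-behaviour-heat}: $\|e^{-N(D+C)^2}\|\to 0$ by spectral calculus, $\|K-e^{-\delta(D+C)^2}K\|\to 0$ by a Gilkey-type estimate). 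Because $\mathcal{L}^\infty_{G,s}(X)$ is holomorphically closed in $C^*(X,E)^G$, the inverse $\bigl(1-K'\bigr)^{-1}=1+F$ has $F\in\mathcal{L}^\infty_{G,s}(X)$, and the decomposition follows. This trades your unresolved Fr\'echet-level heat-kernel decay for the much cheaper ingredients of spectral-theoretic norm bounds plus holomorphic closure. You should adopt that truncation-and-closure mechanism; as written, your proof is missing its central estimate.
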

 
\begin{proof}
We know that there exists $B\in\Psi^{-1}_{G,c}$ such that 
$$(D+C)B=1-K,\quad K\in \mathcal{L}^c_{G} (X)$$
where we denote the identity by 1. 
Indeed, we simply take $B\in\Psi^{-1}_{G,c}$ such that $DB=1-R$, $R\in\Psi^{-\infty}_{G,c}$ and then we have that $(D+C)B= 1-R-CB$
and we know that $CB\in \mathcal{L}^c_{G} (X)$ by \cite[Lemma 4.17]{PPST}; it now suffices to take $K:= R+CB$.

\smallskip

\noindent
{\bf Claim:} there exists $B'\in \mathcal{L}^\infty_{G,s} (X)$ such that 
$$(D+C)(B-B')=1-K', \quad K'\in \mathcal{L}^\infty_{G, s} (X), \quad \| K' \|<1/2
$$
where the norm is the $L^2$-operator-norm, that is, the norm in the algebra of bounded operators $\mathcal{B}(L^2)$.

\begin{lemma}\label{L2-behaviour-heat}
 In $\mathcal{B}(L^2)$, for $f$ satisfying the hypothesis of Lemma \ref{lem:schwartzfunction}, we have that $\| f(-T(D+C)^2)\| \to 0$ as $T\to +\infty$ and $\| f(-\delta(D+C)^2)\| \to ||f||_{C([0, \infty))}$ as $\delta\downarrow 0$. Furthermore, for any $K\in \mathcal{L}^c_{G}(X)$, $\| \exp(-\delta(D+C)^2)K-K\|\to 0$, as $\delta\downarrow 0$. 
\end{lemma}

\begin{proof}
For the first result, we use the fact that the function $f(T\lambda)$ converges to zero in the sup norm on ${\rm spec}_{L^2}\big( -(D+C)^2\big)$. It follows from spectral calculus  that $f(-T(D+C)^2)$ converges to zero in operator norm. For the second result, by spectral calculus, the norm $||f(-\delta(D+C)^2)||$ is equal to the sup norm of the function $|f(-\delta \lambda^2)|$ on ${\rm spec}_{L^2}(D+C)^2$, which converges to $||f||_{C([0, \infty))}$ as $\delta\to 0$. 

\noindent
We next follow the arguments in \cite[Lemma 1.7.5 (c)]{gilkey-book} to prove $\| \exp(-\delta(D+C)^2)K-K\|\to 0$.  For a properly chosen contour $\gamma$, we compute 
\begin{equation}\label{eq:heat-est}
\begin{split}
e^{-\delta (D+C)^2}K-e^{-\delta}K=&\frac{1}{2\pi i}\int_\gamma e^{-t\lambda}\Big( \big((D+C)^2-\lambda \big)^{-1}+\lambda ^{-1}\Big)K {\rm d}\lambda\\
=&\frac{1}{2\pi i}\int_\gamma e^{-t\lambda}\lambda^{-1} \big((D+C)^2-\lambda \big)^{-1}(D+C)^2 K {\rm d}\lambda. 
\end{split}
\end{equation}
For $t\in (0, 1)$, replace $\lambda$ by $t^{-1}\mu$ and shift the contour $t\gamma$ to $\gamma$. We have the above integral equal to the following expression
\begin{equation}\label{eq:mu}
\frac{1}{2\pi i} \int_\gamma e^{-\mu} \mu^{-1} \big((D+C)^2-t^{-1}\mu\big)^{-1} (D+C)^2K {\rm d}\mu. 
\end{equation}
As $K\in \mathcal{L}^{c}_G(X)$, $(D+C)^2K \in \mathcal{L}^{c}_G(X)$. By the spectral calculus, the operator norm of $\big((D+C)^2-\lambda\big)^{-1}$ has the following bound,
\[
\|\big((D+C)^2-\lambda\big)^{-1}\| \leq M \frac{1}{|\lambda|}, 
\]
for sufficiently large $\lambda$, and
\begin{equation}\label{eq:norm}
\|\big((D+C)^2-t^{-1}\mu\big)^{-1}\| \leq M \frac{t}{|\mu|}. 
\end{equation}
Combining the estimate (\ref{eq:norm}) and Equations (\ref{eq:heat-est}) and (\ref{eq:mu}), we conclude with the following bound
\[
\| \exp(-\delta (D+C)^2)K-\exp^{-\delta}K\|\leq \frac{1}{2\pi}t \int_\gamma |\mu^{-2}\exp^{-\mu}{\rm d}\mu| \| (C+D)^2K\|
\leq \delta M' \| (C+D)^2K\|, 
\]
and then
\[
\begin{split}
\|\exp(-\delta (D+C)^2)K-K\|&\leq \|\exp(-\delta (D+C)^2)K-\exp^{-\delta}K\|+(1-e^{-\delta})\|K\|\\
&\leq 
\delta M' \|(D+C)^2K\|+(1-e^{-\delta})\|K\|\to 0,\ \delta\to 0. 
\end{split}
\]
as required.
\end{proof}

\noindent
We now want to prove the claim.
For $N>>0$ and $\delta>0$ small consider
\begin{equation}\label{bprime}
B':=- \int_\delta^N  (D+C)\exp (-t (D+C)^2)dt\circ K\,.
\end{equation}
This operator is in $\mathcal{L}^\infty_{G, s} (X)$.
Then
$$(D+C)(B-B')=1-K +((D+C)^2\circ \int_\delta ^N  \exp (-t (D+C)^2)dt\circ K)$$
But
$$-(D+C)^2\circ \int_\delta^N  \exp (-t (D+C)^2)dt\circ K= (\exp (-N (D+C)^2)\circ K-\exp (-\delta (D+C)^2))\circ K$$
Thus
$$(D+C)(B-B')= 1- (K - \exp (-\delta (D+C)^2)\circ K)-\exp (-N (D+C)^2)\circ K $$
On the right hand side we have the identity minus an element in $\mathcal{L}^\infty_{G, s} (X)$; moreover, using Lemma \ref{L2-behaviour-heat}, we see that
the $L^2$-operator  norm 
of this term, viz.
$$\| -(K - \exp (-\delta (D+C)^2)\circ K)-  \exp (-N (D+C)^2)\circ K \| $$  
is small if $N$ is big enough and $\delta$ is small enough. We have completed the proof of the claim. 

\medskip
\noindent
With the above established claim,  we are led to consider
$$\left( 1- ((K - \exp (-\delta (D+C)^2))\circ K)- \exp (-N (D+C)^2)\circ K)\right)^{-1}$$
which is an operator of the type $1+F$, with $F\in \mathcal{L}^\infty_{G, s} (X)$, given that  $\mathcal{L}^\infty_{G, s} (X)$
is holomorphically closed.

\medskip
\noindent
It follows from the claim that 
$$(D+C)^{-1}= (B-B')+ (B-B')\circ F= B + (-B' +B\circ F - B'\circ F),\quad \text{with} \quad F\in  \mathcal{L}^\infty_{G, s} (X)\,.$$
This shows that $(D+C)^{-1}$ is the sum of an element in $\Psi^{-1}_{G,c}$ and of an element
in $ \mathcal{L}^\infty_{G, s} (X)$. This completes the proof of Proposition \ref{prop:inverse-perturbed}. 
\end{proof}

\noindent
Given Proposition \ref{prop:inverse-perturbed}, we now go back to the large time behaviour of $f(-t (D+C)^2)$.

\smallskip
\noindent
Recall that 
$(D+C)^2= D^2 + A$, with $A=C^2 + DC+  C D$, $A\in  \mathcal{L}^c_{G, s} (X)$. Notice that $A$ is self-adjoint.
We consider the associated resolvent $((D+C)^2 - \lambda)^{-1}$. In \cite{PP2}, for unperturbed Dirac laplacians,
this was analyzed first for $\lambda$ finite, see \cite[Proposition 2.2]{PP2}, and then for 
$\lambda$ large, using parameter-dependent pseudodifferential operators.
 Assume $|{\rm Im}\lambda| > 0$. We write
$$(D^2 +A -\lambda)= \left (1 + A (D^2-\lambda)^{-1} \right)(D^2-\lambda)$$
and get
$$(D^2 +A -\lambda)^{-1}= (D^2-\lambda)^{-1} \left (1 + A (D^2-\lambda)^{-1}\right) ^{-1}.$$
By  \cite[Proposition 2.2, 2.10]{PP2} we know\footnote{As already remarked, in \cite{PP2} the first two authors worked with a different smooth subalgebra, but the same argument extends to $\mathcal{L}^\infty_{G, s}(X)$.} that
$$(D^2-\lambda)^{-1} = B(\lambda) + C(\lambda)$$
with $B(\lambda)\in \Psi^{-2}_{G,c} (X)$ and of uniform $G$-compact support
and $C(\lambda)\in \mathcal{L}^\infty_{G, s} (X)$. Moreover $C(\lambda)\to 0$ in $\mathcal{L}^\infty_{G, s} (X)$
as $|\lambda|\to +\infty$. In fact, as already observed in \cite{PPST}
we can easily see that $C(\lambda)$ is rapidly decreasing in $\lambda$, with values
in $\mathcal{L}^\infty_{G, s} (X)$. Indeed, $(D^2-\lambda)^{-1} = B(\lambda)(1 + F(\lambda))$ with $1+F(\lambda)= (1+R(\lambda))^{-1}$
and $R(\lambda)$ the remainder of the symbolic parametrix $B(\lambda)$. We argued in \cite[Proposition 2.2, 2.10]{PP2}  that $F(\lambda)$ 
is going to 0 in  $\mathcal{L}^\infty_{G, s} (X)$. However, by the elementary identity
$$1- (1+R(\lambda))^{-1}= R(\lambda) [(1+R(\lambda))^{-1}]$$
we understand that $F(\lambda)$ is in fact rapidly decreasing with values in  $\mathcal{L}^\infty_{G, s} (X)$. 

Summarizing, 
\begin{equation}\label{improved}
(D^2-\lambda)^{-1}=B(\lambda) + C(\lambda)
\end{equation}
with 
\begin{itemize}
\item $B(\lambda)\in \Psi^{-2}_{G,c} (X)$  and of uniform $G$-compact support;
\item  $C(\lambda)$ rapidly decreasing in $\lambda$ with 
values in  $\mathcal{L}^\infty_{G, s} (X)$.
\end{itemize}
 Thus, we have the following equation, 
$$(D^2 +A -\lambda)^{-1}= \left(  B(\lambda) + C(\lambda) \right) \left (1 + A (B(\lambda) + C(\lambda)) \right) ^{-1}.$$
The term  $A (B(\lambda) + C(\lambda))$ is clearly in $\mathcal{L}^\infty_{G, s} (X)$, because we know, by 
\cite[Lemma 4.17]{PPST}, that for $\ell\leq 0$ composition 
defines a continuous map $\Psi^{\ell}_{G,c}(X)\times \mathcal{L}^\infty_{G, s} (X)\to \mathcal{L}^\infty_{G, s} (X)$. Moreover the $L^2$-operator norm of the term $A (B(\lambda) + C(\lambda))$ is small for  $|\lambda|$ large,
given that $C(\lambda)$ is rapidly decreasing and that $B(\lambda)$ is in $ \Psi^{-2}_{G,c} (X)$  and of uniform $G$-compact support.%

On the other hand, $\mathcal{L}^\infty_{G, s} (X)$
is holomorphically closed and so $(1 + A (B(\lambda) + C(\lambda))) ^{-1}$ is equal to $1+M(\lambda)$ with $M(\lambda)\in 
\mathcal{L}^\infty_{G, s} (X)$. We have therefore proved that for $\lambda$ finite, $|{\rm Im}\lambda | > 0$, 
\begin{equation}\label{structure}
(D^2 +A -\lambda)^{-1}= B(\lambda)+ N(\lambda)
\end{equation}
with $B(\lambda)\in \Psi^{-2}_{G,c} (X)$ and of uniform $G$-compact support and $N(\lambda)\in \mathcal{L}^\infty_{G, s} (X)$.
In our analysis we have assumed   $|{\rm Im}\lambda| > 0$; the case $|{\rm Im}\lambda | =0$ is treated using  Proposition \ref{prop:inverse-perturbed}.
Let us now look at  the large $\lambda$ behaviour.
As already remarked we know that  $C(\lambda)$ is rapidly decreasing  in $\lambda$ as an element in $\mathcal{L}^\infty_{G, s} (X)$; 
by the same reasoning given there we  have that the same property is true for $M(\lambda)$ and thus, using the fact that $\mathcal{L}^\infty_{G, s} (X)$
is a Fr\'echet algebra, we conclude that also for $N(\lambda)$ we can state that $N(\lambda)$ is rapidly decreasing  in $\lambda$ as an element in $\mathcal{L}^\infty_{G, s} (X)$.


\noindent
Similarly, we have the following equation,
$$(D^2 + A -\lambda)^{-k}=F(\lambda)+G(\lambda)$$
with
$F(\lambda)\in\Psi^{-2k}_{G,c}(X,\Lambda)$, of $G$-compact support uniformly in $\lambda$, and $G(\lambda)\in \mathcal{L}^\infty_{G, s} (X)$
and going to 0 rapidly in $\lambda$ as an element in $\mathcal{L}^\infty_{G,s} (X)$.
At this point we can proceed exactly as in \cite[Subsection 5.1]{PPST} and show that  if  $p$ is a seminorm  on $\mathcal{L}^\infty_{G,s} (X)$
then 
$$ p(\left( \int_{\gamma} g(-t\lambda) F(\lambda)d\lambda \right) \rightarrow 0  \quad \text{as} \quad t\to +\infty$$ 
and 
$$p(\left(\int_{\gamma} g(-t\lambda) G(\lambda)d\lambda  \right) \rightarrow 0  \quad \text{as} \quad t\to +\infty$$ 
so that $$p\left( \int_{\gamma} g(-t\lambda)(D^2-\lambda)^{-k}d\lambda \right) \rightarrow 0 \quad \text{as} \quad t\to +\infty\,.$$ 
The details are very similar to those given in \cite{PPST} for the unperturbed case, and therefore omitted.\\
We summarise our discussion in the following proposition:

\begin{proposition}(Proposition \ref{prop:large-no-boundary-unperturbed})
If  $C\in \mathcal{L}^c_{G} (X)$, $D+C$ is $L^2$-invertible and $f$ satisfies the assumption of Lemma \ref{lem:schwartzfunction}, then
\[
 f (-t(D+C)^2)\rightarrow 0 \quad\text{in}\quad   \mathcal{L}^\infty_{G, s} (X)\quad\text{as}\quad t\to +\infty.
 \]
In particular, the Connes-Moscovici projector $V(t(D+C))-e_1$ converges to 0 in $M_{2\times 2}\big(\mathcal{L}^\infty_{G, s} (X)\big)$ as $t\to +\infty$.
\end{proposition}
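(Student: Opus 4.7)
The plan is to express $f(-t(D+C)^2)$ as a contour integral via the holomorphic functional calculus, and then to analyze the structure of the resolvent of $(D+C)^2$ as a function with values in $\mathcal{L}^\infty_{G,s}(X)$. Concretely, I will take $\gamma$ to be a contour of the form $\ell^+(m,b)\cup\ell^-(m,b)$ chosen to enclose $\mathrm{spec}((D+C)^2)\subset [a^2,\infty)$ at positive distance, write $(D+C)^2=D^2+A$ with $A=C^2+DC+CD\in\mathcal{L}^c_G(X)$ (using \cite[Lemma 4.17]{PPST}), and start from
\[
f(-t(D+C)^2)=\frac{1}{2\pi i}\int_\gamma f(-t\lambda)\,((D+C)^2-\lambda)^{-1}\,d\lambda.
\]
Applying Lemma \ref{lem:schwartzfunction} iteratively and integrating by parts $k$ times (with $k$ chosen to exceed any prescribed pair of orders used to describe the Fréchet seminorms on $\mathcal{L}^\infty_{G,s}(X)$) trades an extra factor $t^{1-k}$ for the higher-order resolvent $((D+C)^2-\lambda)^{-k}$; the prefactor $t^{1-k}$ is what will deliver the vanishing as $t\to+\infty$, provided the $\lambda$-integral can be controlled in every Fréchet seminorm uniformly in $t$.

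The heart of the argument is therefore the structural decomposition of the perturbed resolvent. For $|{\rm Im}\,\lambda|>0$ I would first recall from \cite[Prop. 2.2, 2.10]{PP2} the decomposition of the unperturbed resolvent
\[
(D^2-\lambda)^{-1}=B(\lambda)+C(\lambda),
\]
with $B(\lambda)\in\Psi^{-2}_{G,c}(X)$ of uniform $G$-compact support and $C(\lambda)$ rapidly decreasing in $\lambda$ with values in $\mathcal{L}^\infty_{G,s}(X)$ (the rapid decay following from the Neumann identity $1-(1+R(\lambda))^{-1}=R(\lambda)(1+R(\lambda))^{-1}$ applied to the symbolic remainder). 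Then, factoring
\[
(D^2+A-\lambda)^{-1}=(D^2-\lambda)^{-1}\bigl(1+A(D^2-\lambda)^{-1}\bigr)^{-1},
\]
I use that $A\in\mathcal{L}^c_G(X)$ and $\mathcal{L}^\infty_{G,s}(X)$ is a Fréchet algebra and holomorphically closed inside its $C^*$-completion, together with the smallness of $\|A(B(\lambda)+C(\lambda))\|$ for $|\lambda|$ large, to obtain $(D^2+A-\lambda)^{-1}=B(\lambda)+N(\lambda)$ with $N(\lambda)$ rapidly decreasing in $\lambda$ in $\mathcal{L}^\infty_{G,s}(X)$. The case ${\rm Im}\,\lambda=0$ is where the hypothesis of $L^2$-invertibility of $D+C$ must be used; here I would invoke Proposition \ref{prop:inverse-perturbed} to deduce that the same type of decomposition persists on the horizontal piece of $\gamma$ crossing the real axis. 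Taking the $k$-th power yields $(D^2+A-\lambda)^{-k}=F(\lambda)+G(\lambda)$ of the same structural type (symbolic part in $\Psi^{-2k}_{G,c}(X)$ plus a piece rapidly decaying in $\mathcal{L}^\infty_{G,s}(X)$), because $\Psi^{*}_{G,c}(X)$ acts continuously on $\mathcal{L}^\infty_{G,s}(X)$ by \cite[Lemma 4.17]{PPST}.

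Once the structural decomposition is in hand, I would estimate each seminorm. The contribution from $G(\lambda)$ is handled directly by the rapid decay of $G(\lambda)$ in $\mathcal{L}^\infty_{G,s}(X)$ combined with the Schwartz bounds on $g$, and the resulting Bochner integral along $\gamma$ is absolutely convergent and $O(1)$ in $t$, so that the prefactor $t^{1-k}$ forces it to $0$. For the pseudodifferential part $F(\lambda)\in\Psi^{-2k}_{G,c}(X)$, I would adapt the argument of \cite[Subsection 5.1]{PPST} that was used in the unperturbed invertible case: namely, the uniform $G$-compact support and the parameter-dependent symbol estimates allow one to bound every Fréchet seminorm of $\int_\gamma g(-t\lambda)F(\lambda)d\lambda$ by a constant independent of $t$, again killed by the $t^{1-k}$ factor. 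Assembling these two pieces proves the first assertion; the statement for the Connes--Moscovici projector then follows by applying the first assertion to the two Schwartz functions $e^{-z^2}$ and $e^{-z/2}(1-e^{-z})/z$ appearing in the four entries of $V(t(D+C))-e_1$.

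The main technical obstacle I anticipate is the handling of the resolvent on the real axis, where the unperturbed operator $D$ need not be invertible and the naive factorization $(1+A(D^2-\lambda)^{-1})^{-1}(D^2-\lambda)^{-1}$ breaks down; this is exactly where the hypothesis that $D+C$ is $L^2$-invertible must be leveraged, and Proposition \ref{prop:inverse-perturbed} provides the structural substitute. A secondary subtlety is ensuring that the rapid-decay estimate on $N(\lambda)$ survives raising the resolvent to the $k$-th power and multiplication across the two summands of the unperturbed decomposition; this is essentially bookkeeping inside the Fréchet algebra $\mathcal{L}^\infty_{G,s}(X)$ and its module structure over $\Psi^{*}_{G,c}(X)$, but it is where the continuity properties of all the pieces must be assembled carefully.
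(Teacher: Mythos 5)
Your proposal matches the paper's own proof in essentially every structural step: the contour integral and $k$-fold integration by parts via Lemma \ref{lem:schwartzfunction} to produce the $t^{1-k}$ prefactor, the factorization $(D^2+A-\lambda)^{-1}=(D^2-\lambda)^{-1}\bigl(1+A(D^2-\lambda)^{-1}\bigr)^{-1}$ together with the decomposition $B(\lambda)+C(\lambda)$ from \cite{PP2} and rapid decay of the residual piece obtained via the Neumann identity, the appeal to Proposition \ref{prop:inverse-perturbed} for the real-axis crossing, the propagation of the structure to $(D^2+A-\lambda)^{-k}=F(\lambda)+G(\lambda)$, and the final seminorm estimates by the argument of \cite[Subsection~5.1]{PPST}. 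This is the same proof.
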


\subsection{Small time behaviour on manifolds without boundary}
We consider as above a self-adjoint element $C\in \mathcal{L}_G^c(X)$ such that $D+C$ is $L^2$-invertible. 
We want to prove Proposition \ref{prop:small-time-heat} which we state again for the benefit of the reader: 
\begin{proposition}
For any $K\in \mathcal{L}_G^c(X)$ we have that $K\exp(-\delta D^2)$ converges to $K$ in $\mathcal{L}_{G, s}^\infty(X)$ as $\delta\to 0$. Consequently, it directly follows from the Volterra series expansion of $\exp(-\delta(D+C)^2)$ that for any $C\in \mathcal{L}_G^c(X)$, 
\[
K\exp(-\delta(D+C)^2)\to K,\ \text{in}\ \mathcal{L}_{G, s}^\infty(X) \text{ as } \delta\downarrow 0.
\]
\end{proposition}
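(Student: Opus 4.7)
The plan is to first reduce the perturbed statement to the unperturbed one, then handle the unperturbed case by Duhamel's formula combined with a uniform Fr\'echet-seminorm bound on the heat semigroup acting on $\mathcal{L}_G^c(X)$. For the reduction, the Volterra expansion used in Section \ref{section:heat} to define $\exp(-\delta(D+C)^2)$ shows that
\[
\exp(-\delta (D+C)^2) - \exp(-\delta D^2) = O(\delta) \quad \text{in } \mathcal{L}_{G,s}^\infty(X)
\]
as $\delta\downarrow 0$: the $k$-th term of the series carries a factor $\delta^k/k!$, so the tail with $k \geq 1$ is of order $\delta$ in every Fr\'echet seminorm. Left-multiplying by $K$ preserves this $O(\delta)$ behaviour by continuity of composition $\mathcal{L}_G^c(X)\times\mathcal{L}_{G,s}^\infty(X)\to\mathcal{L}_{G,s}^\infty(X)$, so it suffices to prove the unperturbed claim $K\exp(-\delta D^2)\to K$ in $\mathcal{L}_{G,s}^\infty(X)$.

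For the unperturbed case, Duhamel's formula gives
\[
K\exp(-\delta D^2) - K = -\int_0^\delta K D^2 \exp(-s D^2)\,ds = -\int_0^\delta K'\exp(-s D^2)\,ds,
\]
where $K' := K D^2$. Since the Schwartz kernel of $K$ is smooth and $G$-compactly supported, and since applying the differential operator $D^2$ preserves both properties, $K'\in\mathcal{L}_G^c(X)$. For any continuous seminorm $p$ on $\mathcal{L}_{G,s}^\infty(X)$, the Bochner bound then yields
\[
p\bigl(K\exp(-\delta D^2) - K\bigr) \leq \int_0^\delta p\bigl(K'\exp(-s D^2)\bigr)\,ds,
\]
and the whole problem reduces to establishing the uniform estimate $\sup_{s\in(0,1]} p\bigl(K'\exp(-s D^2)\bigr) < \infty$.

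For this uniform bound I would use the resolvent representation
\[
K'\exp(-s D^2) = \frac{1}{2\pi i}\int_\gamma e^{-s\lambda}\,K'(D^2-\lambda)^{-1}\,d\lambda
\]
along a suitable Hankel contour $\gamma$ encircling $[0,\infty)$, together with the decomposition $(D^2-\lambda)^{-1} = B(\lambda) + C(\lambda)$ of \eqref{improved}, in which $B(\lambda)\in\Psi^{-2}_{G,c}(X)$ is of uniform $G$-compact support and $C(\lambda)\in\mathcal{L}_{G,s}^\infty(X)$ is rapidly decreasing in $\lambda$. The piece $K'C(\lambda)$ contributes an integral that converges absolutely in the Fr\'echet topology, uniformly in $s\in[0,1]$, by the rapid $\lambda$-decay of $C(\lambda)$. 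The piece $K'B(\lambda)$ lies in $\mathcal{L}_G^c(X)$, and combining the order-$(-2)$ parameter-decay of $B(\lambda)$ in Shubin's calculus with the smoothing nature of $K'$ should yield enough $\lambda$-decay of the Fr\'echet seminorms of $K'B(\lambda)$ so that the $e^{-s\lambda}$-weighted contour integral is uniformly bounded for $s\in(0,1]$.

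The main obstacle lies in this last estimate. Unlike the large-time analysis of Proposition \ref{prop:large-no-boundary-unperturbed}, where $e^{-t\lambda}$ provides arbitrarily strong decay as $t\to\infty$, in the short-time regime the factor $e^{-s\lambda}$ degenerates to $1$ as $s\downarrow 0$, so all of the $\lambda$-decay must be drawn from $K'(D^2-\lambda)^{-1}$ itself. Making this precise for the parametrix piece $K'B(\lambda)$ requires careful parameter-dependent pseudodifferential analysis at the level of Fr\'echet seminorms on $\mathcal{L}_G^c(X)$; this is the technical heart of the argument. Once the uniform bound is in place, the Bochner estimate above delivers $p(K\exp(-\delta D^2)-K) = O(\delta)$, and the Volterra-series reduction of the first paragraph promotes this directly to the perturbed statement in the proposition.
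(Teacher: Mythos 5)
Your Duhamel-based route is a genuinely different packaging from the paper's, which goes directly through the Cauchy-integral representation
\[
Ke^{-\delta D^2}=\frac{1}{2\pi i}\int_\gamma e^{-\delta\lambda}\,K(D^2-\lambda)^{-1}\,d\lambda
\]
and applies dominated convergence to each piece of the splitting $(D^2-\lambda)^{-1}=F(\lambda)+E(\lambda)$, without ever invoking Duhamel. However, your plan has two substantive problems.

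First, there is a circularity in the way you write Duhamel's formula: the identity $Ke^{-\delta D^2}-K=-\int_0^\delta KD^2 e^{-sD^2}\,ds$ with lower limit $0$ already presupposes that $s\mapsto Ke^{-sD^2}$ is continuous at $s=0$ with value $K$ in the Fr\'echet topology of $\mathcal{L}^\infty_{G,s}(X)$, which is precisely the conclusion you are after. What you actually get from Duhamel on $(0,\infty)$, combined with your putative uniform bound, is that $Ke^{-sD^2}$ is Lipschitz on $(0,1]$ and therefore converges to \emph{some} limit $L\in\mathcal{L}^\infty_{G,s}(X)$ as $s\downarrow 0$. Identifying $L=K$ requires a separate argument; the paper does it by invoking the $L^2$-operator-norm convergence $\|Ke^{-\delta D^2}-K\|\to 0$ from Lemma \ref{L2-behaviour-heat} and using that Fr\'echet convergence in $\mathcal{L}^\infty_{G,s}(X)$ dominates $L^2$-operator convergence. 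Your sketch omits this identification step entirely, and as written deduces the rate $O(\delta)$ by applying the fundamental theorem of calculus up to the endpoint $s=0$, which is not yet available.

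Second, and more seriously, the uniform bound $\sup_{s\in(0,1]}p(K'e^{-sD^2})<\infty$ is not a lemma you can defer: it is the entire technical content of the result. The paper establishes exactly the estimate you would need, namely $p(KF(\lambda))\leq C_p/\|\lambda\|^2$, and proving it occupies the bulk of the argument (Part II of the proof), via a careful analysis of the composition of the Schwartz kernel of $K$ with the parameter-dependent symbol of $F(\lambda)$, integration by parts in the slice variable to gain $\xi$-decay, and verification that the result remains $G$-compactly supported with the right Fr\'echet seminorm decay in $\lambda$. Asserting that this ``should yield enough $\lambda$-decay'' is not a proof; it is precisely where one must work, and in fact once you have done that work you have already proved (by dominated convergence, with the factor $e^{-\delta\lambda}\to 1$ pointwise) that $Ke^{-\delta D^2}$ converges in $\mathcal{L}^\infty_{G,s}(X)$, making the Duhamel detour superfluous.
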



\begin{proof}
We can write 
\begin{equation}\label{eq:K-heat kernel}
\begin{split}
Ke^{-\delta D^2}=&K\frac{1}{2\pi i}\int_\gamma e^{-\delta\lambda} \big(D^2-\lambda \big)^{-1} {\rm d}\lambda. 
\end{split}
\end{equation}
We need to show that for any seminorm $p$ on $\mathcal{L}^\infty_{G,s}(X)$, 
\[
p(Ke^{-\delta D^2}-K)\to 0 \text{ as }  \delta\to 0. 
\]
We start the proof by observing that left multiplication by a smoothing operator $K\in \mathcal{L}_G^c(X)$ defines a continuous  
map on   $\mathcal{L}^\infty_{G,s}(X)$. A similar statement holds for $\Psi^{-2k}_{G,c} (X)$: multiplication by 
$K\in \mathcal{L}_G^c(X)$ is continuous with respect to the seminorms on $\Psi^{-2k}_{G,c} (X)$.
By standard properties of the Bochner integral under the application of a  bounded operator and using the integration by parts trick to replace $(D^2-\lambda)^{-1}$ by $(D^2-\lambda)^{-k}$, we have

\begin{align*}
K\frac{1}{2\pi i}\int_\gamma e^{-\delta \lambda}(D^2-\lambda)^{-1}{\rm d}\lambda&=t^{1-k}\frac{(k-1)!}{2\pi i}K\int_\gamma e^{-\delta \lambda} (D^2-\lambda)^{-k}{\rm d}\lambda\\
&=t^{1-k}\frac{(k-1)!}{2\pi i}K\left( \int_\gamma e^{-\delta \lambda} B(\lambda){\rm d}\lambda+
 \int_\gamma e^{-\delta \lambda} C(\lambda){\rm d}\lambda\right)\\
&=t^{1-k}\frac{(k-1)!}{2\pi i}\left( \int_\gamma e^{-\delta \lambda} K B(\lambda){\rm d}\lambda+
 \int_\gamma e^{-\delta \lambda} K C(\lambda){\rm d}\lambda\right)\\
 &= t^{1-k}\frac{(k-1)!}{2\pi i}\int_\gamma e^{-\delta \lambda} K(D^2-\lambda)^{-k}{\rm d}\lambda\\
 &=\frac{1}{2\pi i}\int_\gamma e^{-\delta \lambda} K(D^2-\lambda)^{-1}{\rm d}\lambda
\end{align*}
where we have expressed $(D^2-\lambda)^{-k}$ as $B(\lambda)+C(\lambda)$ 
with $B(\lambda)\in \Psi^{-2k}_{G,c} (X)$ and of uniform $G$-compact support and $C(\lambda)\in \mathcal{L}^\infty_{G, s}(X)$ 
and converging to zero rapidly as $|\lambda|\to \infty$.
%
Write now
\[
\big(D^2 -\lambda\big)^{-1}= F(\lambda)+ E(\lambda)
\]
with $F(\lambda)\in \Psi^{-2}_{G,c} (X)$ and of uniform $G$-compact support and $E(\lambda)\in \mathcal{L}^\infty_{G, s}(X)$ 
and converging to zero rapidly as $|\lambda|\to \infty$. We then obtain the following expression for $K\exp(-\delta D^2)$, 
\[
Ke^{-\delta D^2}=\frac{1}{2\pi i} \int_\gamma e^{-\delta \lambda}KF(\lambda){\rm d}\lambda+ \frac{1}{2\pi i}\int_\gamma e^{-\delta \lambda} KE(\lambda){\rm d}\lambda. 
\]
In the following, we study the above two integrals separately. \\

\noindent{Part I. $E(\lambda)$ integral.} We observe that $p(e^{-\delta \lambda}KE(\lambda))\leq p(KE(\lambda))$. As $E(\lambda)\in \mathcal{L}^\infty_{G, s}(X)$ converges to 0 rapidly, we have that for every seminorm $p$ on $\mathcal{L}_{G, s}^\infty(X)$, 
\[
\int_\gamma p\big(KE(\lambda)\big){\rm d}\lambda <\infty. 
\]
It follows from the dominated convergence theorem that 
\[
\lim _{\delta\to 0} p\left(\frac{1}{2\pi i}\int_\gamma e^{-\delta \lambda} KE(\lambda){\rm d}\lambda-\frac{1}{2\pi i}\int_\gamma KE(\lambda){\rm d}\lambda\right)=0 
\]
which is what we wanted to show.\\
%
\noindent{Part II. $F(\lambda)$ integral.} We need to use a $G$-equivariant version of the pseudodifferential calculus \cite[Section 1.3]{PPT} for the special case that the Lie groupoid is a connected real reductive Lie group $G$. More precisely, there is a $G$-invariant symbol function $b(\lambda, x, \xi)$ on $T^*X$ such that outside a $G$-compact neighborhood of the zero section in $T^*X$, 
\begin{equation}\label{eq:b-decay}
\vert\vert b(\lambda, x, \xi)\vert\vert \leq C (\vert \vert \xi\vert\vert ^2+\vert\vert \lambda \vert\vert^2)^{-1},\ 
\end{equation}
for sufficiently large $|\lambda|$ and constant $C$ independent of $x$, $\xi$, and $\lambda$.  

Fix $\nu>0$ sufficiently small, i.e. smaller than the injective radius of the riemannian metric on $X$.  Let $\chi(x,y)$ be a $G$-invariant smooth function on $X\times X$ such that $\chi(x,y)$ vanishes when the distance between $x$ and $y$ is larger than $\nu$ and $\chi(x,y)$ takes value 1 when the distance between $x$ and $y$ is less than $\nu/2$.

The operator $F(\lambda)$ has the following expression, for $f\in C^\infty_c(X)$, 
\[
F(f)(x)=\int_X \int_{T_x^*X}  \chi(x,y) e^{i\langle\xi, \exp^{-1}_x(y)\rangle}b(\lambda, x, \xi )f(y){\rm d}y{\rm d}\xi,
\]
where $\exp_x: T_xX\to X$ is the exponential map at $x$, which is bijective for $v\in T_x X$ with $||v||$ less than the injective radius of $X$. 

The operator $K$ has the following form 
\[
K(f)(x)=\int_X  k(x,y) f(y){\rm d}y,
\]
for a smooth $G$-equivariant function $k(x,y)$ on $X\times X$; moreover, because of the $G$-compact support condition there is a positive $\mu>0$ such that $k(x,y)$ vanishes when the distance between $x$ and $y$ is larger than $\mu$. 

The composition of $K$ and $F$ can be computed as follows,
\[
K\circ F(f)(x)=\int_X \int_{T_x^*X} \int_X k(x,y)\chi(y,z) e^{i\langle \xi, \exp^{-1}_y(z)\rangle} b(\lambda, y, \xi) f(z){\rm d}\xi{\rm d}y{\rm d}z. 
\] 

The proof of the following properties is by direct computation and left to the reader. 
\begin{enumerate}
\item As the integration over $y$ is on a compact set, the integration by parts trick on $y$ allows us to replace the symbol function $b(\lambda,y, \xi)$ by a new one $\tilde{b}(\lambda, y, \xi)$ such that 
\[
\vert \vert \tilde{b}(\lambda, y, \xi)\vert\vert\leq \frac{C}{(||\xi||^2+1)^{-2M}(||\xi||^2+||\lambda||^2)} 
\]
where $M$ can be made arbitrarily  large, e.g. larger than the dimension of $X$. 
It follows from the above decay property of $\tilde{b} (\lambda, y, \xi)$ that the following integral 
\[
l(\lambda, x, z):= \int_{T_x^*X} \int_X k(x,y)\chi(y,z) e^{i\langle \xi, \exp^{-1}_y(z)\rangle} b(\lambda, y, \xi){\rm d}\xi{\rm d}y
\]
converges absolutely.  
\item $\forall \lambda$, $l(\lambda, x,z)$ is a $G$-equivariant smooth function on $X\times X$ that vanishes when the distance between $x$ and $z$ is larger than $\mu+\nu$.
\item It follows from estimate (\ref{eq:b-decay}), that for $s,t\geq 0$, there is $C_{s,t}>0$ such that 
\[
\sup_{x,z\in X\times X}||(D_x^s D_z^tl)(\lambda, x, z)|| \leq \frac{C_{s,t}}{||\lambda||^2},
\]
where $D_x$ and $D_z$ are partial differentiations along the $x$ and $z$ directions. 
\end{enumerate}
By (1) and (2), we conclude that the composition $K\circ F$ belongs to $\mathcal{L}^c_G(X)\subset \mathcal{L}^\infty_{G, s}(X)$. Furthermore, by (3), for any seminorm $p$ on $\mathcal{L}^\infty_{G,s}(X)$, we have the bound $p(K F(\lambda))\leq C_p/||\lambda||^2$ and accordingly the convergent Bochner integral as an element in $\mathcal{L}^{\infty}_{G,s}(X)$, 
\[
\int_\gamma KF(\lambda){\rm d}\lambda.
\]
The dominated convergence theorem for the Bochner integral gives that 
\[
\lim _{\delta\to 0}  p\left(\frac{1}{2\pi i}\int_\gamma e^{-\delta \lambda} KF(\lambda){\rm d}\lambda-\frac{1}{2\pi i}\int_\gamma KF(\lambda){\rm d}\lambda\right)=0.  
\]
Summarizing Part I and II, we conclude that in $\mathcal{L}^\infty_{G, s}(X)$, 
\[
\begin{split}
\lim_{\delta\to 0} Ke^{-\delta D^2}&=\lim _{\delta\to 0}\left(\frac{1}{2\pi i} \int_\gamma e^{-\delta \lambda}KF(\lambda){\rm d}\lambda\right)+ \lim_{\delta\to 0}\left(\frac{1}{2\pi i}\int_\gamma e^{-\delta \lambda}KE(\lambda){\rm d}\lambda\right)\\
&=\frac{1}{2\pi i} \int_\gamma KF(\lambda){\rm d}\lambda+ \frac{1}{2\pi i}\int_\gamma K E(\lambda){\rm d}\lambda\\
&=\frac{1}{2\pi i}\int_\gamma K(F+E)(\lambda) {\rm d}\lambda.
\end{split}
\]
We conclude that $Ke^{-\delta D^2}$ is a convergent sequence in $\mathcal{L}^\infty_{G, s}(X)$;
since we have proved that  for the $L^2$-operator norm we do have
$$ \lim_{\delta\downarrow 0} \| Ke^{-\delta D^2} - K \| =  0$$
we conclude that $Ke^{-\delta D^2}$ converges to $K$   in $\mathcal{L}^\infty_{G, s}(X)$\end{proof}

\subsection{Large time behaviour on manifolds with boundary}
Next we tackle the large time behaviour of the heat kernel on $Y$ with boundary under the assumption that there exists $a>0$  
such that 
\[
{\rm Spec}_{L^2} (D+C)\cap [-2a,2a]=\emptyset.
\]
Our ultimate goal is to prove that under this hypothesis
$$ \exp (-t(D+C)^2)\rightarrow 0 \quad\text{in}\quad  {}^b \mathcal{L}^\infty_{G, s} (Y)\quad\text{as}\quad t\to +\infty$$
and more generally that for the Connes-Moscovici projector we have
 \begin{equation*} V(t(D+C))-e_1\to 0\quad\mbox{in}~M_{2\times 2}\Big({}^b\mathcal{L}^\infty_{G, s} (Y)\Big)\quad \mbox{as} ~t\to +\infty.
 \end{equation*} 

To this end we first need to investigate the structure of the inverse $(D+C)^{-1}$. Let us fix $\delta\in (0,a)$. We shall work with the
calculus with bounds ${}^b \Psi^{*}_{G,c}(Y)+ {}^b \mathcal{L}^{\infty,\delta}_{G} (Y)+  \mathcal{L}^{\infty,\delta}_{G, s} (Y)$ but  we shall not write the
$\delta$ in our notation.

\begin{proposition}\label{prop:structure-of-inverse-perturbed} Let $C\in  {}^b \mathcal{L}^c_{G,s} (Y)$ be  self adjoint  and assume that $D+C$ is $L^2$-invertible.
If $B\in {}^b \Psi^{-1}_{G,c}(Y) + {}^b \mathcal{L}^\infty_{G, s} (Y)$ is a full parametrix for $D+C$ then there exist $B'\in \mathcal{L}^\infty_{G, s} (Y)$ and $F \in \mathcal{L}^\infty_{G, s} (Y)$,
with $\mathcal{L}^\infty_{G, s} (Y)$ denoting as usual the residual algebra, such that 
$$(D+C)^{-1}= (B-B') + (B-B') \circ F\,.$$
Consequently,
\begin{equation}\label{inverse-perturbed}
(D+C)^{-1}=B + G\quad\text{with}\quad B\in {}^b \Psi^{-1}_{G,c}(Y) + {}^b \mathcal{L}^\infty_{G, s} (Y)\quad\text{and}\quad G\in \mathcal{L}^\infty_{G, s} (Y)\,.
\end{equation}
\end{proposition}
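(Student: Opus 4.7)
The plan is to mimic the closed-case argument of Proposition~\ref{prop:inverse-perturbed} while working inside the $b$-calculus with bounds. To begin with, by definition of a full parametrix we have
\[
(D+C)B = 1-K,\qquad K\in \mathcal{L}^\infty_{G,s}(Y),
\]
so the task is to improve $B$ by a residual correction until the remainder becomes invertible. Following the closed case, I would set
\[
B' := -\int_\delta^N (D+C)\,\exp\!\bigl(-t(D+C)^2\bigr)\,dt\;\circ\; K,
\]
with $\delta$ small and $N$ large to be chosen. The Volterra-series construction of the previous section places $\exp(-t(D+C)^2)$ in ${}^b\mathcal{L}^\infty_{G,s}(Y)$, and because the residual algebra is a two-sided ideal in the $b$-calculus the composition $(D+C)\exp(-t(D+C)^2)\circ K$ lies in $\mathcal{L}^\infty_{G,s}(Y)$ uniformly in $t\in[\delta,N]$; hence the Bochner integral $B'$ belongs to $\mathcal{L}^\infty_{G,s}(Y)$.

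Next I would compute, using $(D+C)^2\exp(-t(D+C)^2)=-\tfrac{d}{dt}\exp(-t(D+C)^2)$,
\[
(D+C)(B-B') = 1 - \bigl(K - \exp(-\delta(D+C)^2)K\bigr) - \exp(-N(D+C)^2)K,
\]
and show that the bracketed residual operator has small $L^2$-operator norm for $\delta$ small and $N$ large. The second term is controlled by the $L^2$-analogue of Lemma~\ref{L2-behaviour-heat} together with the large-time vanishing of Proposition~\ref{prop:large-yes-boundary-perturbed}. For the first term one needs the $b$-manifold analogue of the small-time convergence $\exp(-\delta(D+C)^2)K\to K$ in the operator norm; this can be obtained by the same resolvent estimate used in the closed case, applied to the $b$-resolvent expansion \eqref{structure} (whose adaptation to the $b$-calculus with bounds is straightforward, using that $C$ is of $G$-compact support and the residual algebra is an ideal).

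Once both pieces are shown to have small norm, the operator $1-K'$ with
\[
K' := \bigl(K - \exp(-\delta(D+C)^2)K\bigr) + \exp(-N(D+C)^2)K \in \mathcal{L}^\infty_{G,s}(Y)
\]
is invertible as a bounded operator; by holomorphic closedness of the residual algebra in its $C^*$-closure one has $(1-K')^{-1}=1+F$ with $F\in\mathcal{L}^\infty_{G,s}(Y)$. Combining with $L^2$-invertibility of $D+C$ gives
\[
(D+C)^{-1} = (B-B')(1+F) = (B-B') + (B-B')\circ F,
\]
which is the first assertion. Since $B'$, $F$, and $B'F$ all lie in $\mathcal{L}^\infty_{G,s}(Y)$ and $BF$ is residual by the ideal property of $\mathcal{L}^\infty_{G,s}(Y)$ in the $b$-calculus with bounds, setting $G:=-B'+BF-B'F\in\mathcal{L}^\infty_{G,s}(Y)$ yields \eqref{inverse-perturbed}.

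The main obstacle will be the $b$-analogue of the small-time statement $\|\exp(-\delta(D+C)^2)K-K\|\to 0$: the heat kernel here is only in ${}^b\mathcal{L}^\infty_{G,s}(Y)$ rather than residual, so the estimate \eqref{eq:heat-est} must be reworked with the $b$-resolvent expansion \eqref{structure} and with uniform control in $\lambda$ of the parameter-dependent symbolic part $B(\lambda)$. Once this is in hand, the rest of the argument is a routine transcription of Proposition~\ref{prop:inverse-perturbed} into the $b$-setting.
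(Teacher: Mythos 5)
Your proposal follows the same route as the paper: transcribe the closed-case argument of Proposition~\ref{prop:inverse-perturbed} into the $b$-setting, using (i) that $\mathcal{L}^\infty_{G,s}(Y)$ is an ideal in ${}^b\mathcal{L}^\infty_{G,s}(Y)$ and holomorphically closed in $C^*(Y_0\subset Y)^G$, (ii) the module structure of ${}^b\Psi^0_{G,c}(Y)$ on ${}^b\mathcal{L}^\infty_{G,s}(Y)$, and (iii) the $L^2$-operator-norm behaviour of $\exp(-t(D+C)^2)\circ K$ as $t\downarrow 0$ and $t\uparrow\infty$, exactly the three ingredients the paper lists. Your identification of the small-time norm estimate as the one new point to verify matches the paper's third bullet.

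One caution: you invoke Proposition~\ref{prop:large-yes-boundary-perturbed} to control $\exp(-N(D+C)^2)K$ for $N$ large, but that proposition's proof in Section~\ref{sect:proofs-heat} itself relies on Proposition~\ref{prop:structure-of-inverse-perturbed} (the one you are proving), so this would be circular. Fortunately you don't need it: what is required is only that $\|\exp(-N(D+C)^2)\|_{\mathcal{B}(L^2)}\le e^{-Na^2}\to 0$, which is immediate from the spectral theorem since $D+C$ is $L^2$-invertible with spectral gap $a$ — this is exactly the $b$-analogue of the first part of Lemma~\ref{L2-behaviour-heat} that you also cite. Dropping the reference to Proposition~\ref{prop:large-yes-boundary-perturbed} and keeping only the spectral-calculus argument removes the circularity and makes your proof agree with the paper's.
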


\begin{proof}
The proof is very similar to the proof of Proposition \ref{prop:inverse-perturbed}, once we use that 
\begin{itemize}
\item $\mathcal{L}^\infty_{G, s} (Y)$ is an ideal in ${}^b \mathcal{L}^\infty_{G, s} (Y)$ and  is holomorphically closed in $C^* (Y_0\subset Y)^G\subset \mathcal{B}(L^2)$;
\item by the analogue of \cite[Lemma 4.17]{PPST}  in the $b$-context (already observed in 
\cite[Proposition 5.22]{PP2}) there is a continuous  module structure
$${}^b \Psi^0_{G,c}(Y)\times {}^b \mathcal{L}^\infty_{G, s} (Y)\rightarrow {}^b \mathcal{L}^\infty_{G, s} (Y);$$ 
\item by an analogue of Lemma \ref{L2-behaviour-heat} for ${}^b \mathcal{L}^\infty_{G, s} (Y)$, we can see that $\exp (-t(D+C)^2)\circ K$ is well defined in $ {}^b \mathcal{L}^\infty_{G, s} (Y)$ and interpolates between
$K$, when $t\downarrow 0$,  and the 0-operator, when $t\uparrow +\infty$,  as a bounded operator on $L^2$.
\end{itemize}
\end{proof}

We now study the resolvent $((D^2+A)-\mu)^{-1}$, $\mu$ in the image of $\gamma$. The trick of writing 
$(D^2 +A -\mu)^{-1}= (D^2-\mu)^{-1} \left (1 + A (D^2-\mu)^{-1}\right) ^{-1}$ is not very useful on manifolds with boundary, because  $A (D^2-\mu)^{-1}$ does not belong
to a holomorphically closed subalgebra (indeed, this operator is  not residual).

 \noindent
 We proceed differently. For $\mu$ is a finite region, say $|\mu |<1$ and always $\mu$ in the straight-half-lines $\ell^\pm (m,b)$,
 see \eqref{half-lines}, 
we can use Proposition 
\ref{prop:structure-of-inverse-perturbed} since 
$$((D^2+A)-\mu)^{-1}= ((D+C)+\sqrt{\mu})^{-1}  ((D+C)-\sqrt{\mu})^{-1}$$
and the two operators on the right hand side have a structure like the one appearing in \eqref{inverse-perturbed}.  We want  now to study the behaviour of the resolvent for $| \mu |$ large,  $\mu\in \ell^\pm (m,b)$,  and prove a result similar to \eqref{improved}.

Consider a symbolic parametrix for $(D^2+A)-\mu)$;  this is equal to a symbolic parametrix for
$(D^2 -\mu)$, call it $B^{{\rm sym}} (\mu)$. Thus, if $(D^2 -\mu) B^{{\rm sym}} (\mu)= 1+ R^{{\rm sym}} (\mu)$, then 
$$(D^2+A)-\mu)B^{{\rm sym}} (\mu)= 1 + R^{{\rm sym}} (\mu)+ A  B^{{\rm sym}} (\mu)\,.$$

\noindent
We set 
$$ S(\mu):=R^{{\rm sym}} (\mu)+ A  B^{{\rm sym}} (\mu),$$
where $S$ stands for symbolic. We recall from 
\cite{PP2} and \cite{PPST} that 
 $R^{{\rm sym}} (\mu)$ (respectively $B^{{\rm sym}} (\mu)$) is a compactly supported $G$-equivariant $b$-pseudodifferential  operator 
with parameter of order $-\infty$ (respectively of order $-2$). Moreover, 
as $A\in {}^b \mathcal{L}^\infty_{G, s} (Y)$, we have that for each $\mu$, the operator 
$A  B^{{\rm sym}} (\mu)$ is in ${}^b \mathcal{L}^\infty_{G, s} (Y)$; indeed, we know from \cite{PP2} that
there is a continuous  module structure
$ {}^b \mathcal{L}^\infty_{G, s} (Y)\times {}^b \Psi^{\ell}_{G,c}(Y)\rightarrow {}^b \mathcal{L}^\infty_{G, s} (Y)$
for any $\ell\leq 0$. As already explained in \cite{PP2} we have that  $AB^{{\rm sym}} (\mu)$ is $O(1/|\mu|)$,
as a map with values in ${}^b \mathcal{L}^\infty_{G, s} (Y)$; indeed, $B^{{\rm sym}} (\mu)$ is $O(1/|\mu|)$
as a map in ${}^b \Psi^{0}_{G,c}$ and when we compose with $A$ we get the desired result. 
We now make a remark that will be useful
at the end of the proof. We use  the particular structure of $C$
and thus assume that $ I(A,\lambda) =\alpha(\lambda) A_\partial$,
with $A_\partial\in \mathcal{L}^c_{G, s} (\partial Y)$ and 
$\alpha (\lambda)$ an entire function which is rapidly decreasing in ${\rm Re}\lambda$
on any horizontal strip $|{\rm Im} \lambda|<N$, in particular in the strip  $\RR\times (-i\delta,i\delta)\subset \CC$, $\delta$ in which was introduced after \eqref{specrtum-perturbed}}. 
Under this additional assumption we   then see that 
 \begin{equation}\label{dlm}
 I(AB^{{\rm sym}} (\mu),\lambda)=\alpha (\lambda) A_\partial 
 I(B^{{\rm sym}} (\mu),\lambda)
 \;\; \text{with}\;\;A_\partial 
 I(B^{{\rm sym}} (\mu),\lambda)
\in  \mathcal{L}^\infty_{G, s} (\partial Y).
 \end{equation} 
We go back to the study of $((D^2+A)-\mu)^{-1}$. The true parametrix of $(D^2+A)-\mu)$ is 
$$B(\mu):= B^{{\rm sym}} (\mu) - \varphi  (I(D^2 + A -\mu)^{-1} I(S (\mu))).$$
We then have  $(D^2+A-\mu) B(\mu)=1- R(\mu)$
with $R(\mu)$, the remainder of the true parametrix, an element in the residual
algebra $\mathcal{L}^\infty_{G, s} (Y)$. Explicitly,
\begin{equation}\label{trueremainder}
R(\mu):= S (\mu)- (D^2+A-\mu) \varphi (I(D^2 + A -\mu)^{-1} I(S(\mu))).
\end{equation}
 We want to investigate the large $\mu$ behaviour 
of $R(\mu)$ and show first of all  that its $L^2$-operator norm is small for $|\mu|$ large. Next we shall use 
the structure of $B(\mu) (1-R(\mu))^{-1})$ in order to show that 
$\int e^{-\mu t} B(\mu) (1- R(\mu))^{-1} d\mu$, that is $\int e^{-\mu t} (D^2 + A -\mu)^{-1} d\mu$,
 goes  to zero in  ${}^b \mathcal{L}^\infty_{G, s} (Y)$ as $t\to +\infty$.\\
Let us analyze the norm of \eqref{trueremainder}. The first summand $S (\mu)$ has certainly small norm for $|\mu|$ large, thanks to the above remarks. We now tackle the second summand
in $R(\mu)$, that is
$(D^2+A-\mu) \varphi (I(D^2 + A -\mu)^{-1} I(S(\mu)))$. Recalling that $ S(\mu):=R^{{\rm sym}} (\mu)+ 
A  B^{{\rm sym}}(\mu)$ we rewrite this term as
\begin{equation}\label{extended-}
(D^2+A-\mu) \varphi (I(D^2 + A -\mu)^{-1} I(R^{{\rm sym}}(\mu)) + 
(D^2+A-\mu) \varphi (I(D^2 + A -\mu)^{-1} I(AB^{{\rm sym}}(\mu)).
\end{equation}
We proceed to analyze these two terms separately, starting with the first one. 
We thus consider the operator $I(D^2 + A -\mu)^{-1} I(R^{{\rm sym}}(\mu))$.
This operator has a Schwartz kernel given in projective coordinates by
\begin{equation}\label{inverse-mellin}
K (s,\cdot,\cdot)= \int_\RR s^{i\lambda} (D^2_\partial + \lambda^2 + I(A,\lambda) -\mu)^{-1} \circ I(R^{{\rm sym}} (\mu), \lambda) d\lambda.
\end{equation}
Observe that $ I(R^{{\rm sym}} (\mu), \lambda)$ defines a function of $\lambda\in\CC$ and $\mu\in\gamma$ with values in $\mathcal{L}^\infty_{G, s} (\partial Y)$;
we consider it as a $\mathcal{L}^\infty_{G, s} (\partial Y)$-valued  function on $(\RR\times (-i\delta,i\delta))\times \gamma$ and observe crucially  that is rapidly 
decreasing as $|{\rm Re} (\lambda) | + |\mu |\to +\infty$.
We can write 
$$(D^2_\partial + \lambda^2 + I(A,\lambda) -\mu)^{-1} = (D^2_\partial + \lambda^2  -\mu)^{-1} \circ (1+ I(A,\lambda) (D^2_\partial + \lambda^2 -\mu)^{-1} )^{-1}.$$
We now recall that $ I(A,\lambda) =\alpha(\lambda) A_\partial$, with $A_\partial\in \mathcal{L}^\infty_{G, s} (\partial Y)$
and 
$\alpha (\lambda)$ an entire function which is rapidly decreasing in ${\rm Re}\lambda$
on any horizontal strip $|{\rm Im} \lambda|<N$.
This means that the  kernel appearing under the sign of integral in the right hand side of  \eqref{inverse-mellin} can be written
as 
$$\left( (D^2_\partial + \lambda^2  -\mu)^{-1} \circ (1+  (\alpha(\lambda) A_\partial) (D^2_\partial + \lambda^2 -\mu)^{-1} )^{-1}\right) \circ  I(R^{{\rm sym}} (\mu), \lambda).$$
From the structure of $(D^2_\partial + \lambda^2  -\mu)^{-1} $, discussed in \cite{PP2}
and also below,
and the joint rapid decay of $I(R^{{\rm sym}} (\mu), \lambda)$
we understand  that this operator is rapidly decreasing on $(\RR\times (-i\delta,i\delta))\times \gamma$ as $|{\rm Re} \lambda | + | \mu |\to +\infty$.
This means that, as in the unperturbed case, $\mu\to \varphi  (I(D^2 + A -\mu)^{-1} I(R^{{\rm sym}} (\mu))$ is rapidly decreasing
with values in ${}^b \mathcal{L}_{G, s}^\infty ( Y)$. This property is unchanged by the composition with $(D^2+A)$
and by multiplication by $-\mu$.\\
Summarizing: the term $(D^2+A-\mu) \varphi (I(D^2 + A -\mu)^{-1} I(R^{{\rm sym}}(\mu))$ 
appearing in 
\eqref{extended-}
 has  norm $<1/4$ for $|\mu|$ large.\\
 We now tackle the second term in \eqref{extended-}, viz.
 $$(D^2+A-\mu) \varphi (I(D^2 + A -\mu)^{-1} I(AB^{{\rm sym}}(\mu))).$$
 Recall, see \eqref{dlm}, that 
$$
 I(AB^{{\rm sym}} (\mu),\lambda)=\alpha (\lambda) A_\partial 
 I(B^{{\rm sym}} (\mu),\lambda)
 \;\; \text{with}\;\;A_\partial 
 I(B^{{\rm sym}} (\mu),\lambda)
\in  \mathcal{L}^\infty_{G, s} (\partial Y)
$$
  We thus consider the operator $I(D^2 + A -\mu)^{-1} I(AB^{{\rm sym}}(\mu))$ with Schwartz kernel in projective coordinates
given by
\begin{equation}\label{inverse-mellin-bis}
K (s,\cdot,\cdot)= \int_\RR s^{i\lambda} (D^2_\partial + \lambda^2 + I(A,\lambda) -\mu)^{-1} \circ I(AB^{{\rm sym}}(\mu), \lambda) d\lambda.
\end{equation}
The operator under the integral sign can be written, proceeding as above, as
$$\left( (D^2_\partial + \lambda^2  -\mu)^{-1} \circ (1+  (\alpha(\lambda) A_\partial) (D^2_\partial + \lambda^2 -\mu)^{-1} )^{-1}\right) \circ  \alpha(\lambda) A_\partial 
 I(B^{{\rm sym}} (\mu),\lambda)
.$$
Recall now the discussion in \cite{PP2}, \cite{PPST} and the remark given here, after Proposition \eqref{prop:inverse-perturbed}  (all this inspired, in turn, from
\cite[Chapter 7]{Melrose-Book}, see in particular pp 284 to 288, and Lemma 7.35 there): 
 for the structure of $(D^2_\partial + \lambda^2  -\mu)^{-1}$
 we have
 $$(D^2_\partial + \lambda^2  -\mu)^{-1}=
  B^{{\rm sym}}_\partial (\lambda,\mu) +
 B^{{\rm sym}}_\partial (\lambda,\mu) \circ  L(\lambda,\mu)$$
 with $B^{{\rm sym}}_\partial (\lambda,\mu)$ of order $1/| \lambda^2  -\mu|$, as a function with values 
in $ \Psi^0_{G,c} (\partial Y)$ and $L(\lambda,\mu)$ an element 
in $\mathcal{L}^\infty_{G, s} (\partial Y)$ rapidly decreasing jointly in $\lambda$ and $\mu$.
We can thus write 
\begin{equation}\label{resolvent-l-m}
(D^2_\partial + \lambda^2  -\mu)^{-1}=
  B^{{\rm sym}}_\partial (\lambda,\mu) +
  G(\lambda,\mu)
  \end{equation}
  with $G(\lambda,\mu)$ rapidly decreasing jointly in $\lambda$ and $\nu$.
Thus 
$$\alpha(\lambda) A_\partial (D^2_\partial + \lambda^2 -\mu)^{-1}$$
can be written as 
$$\Sigma (\lambda,\mu) + \Omega (\lambda,\mu)$$
with $\Sigma (\lambda,\mu)$  a smoothing operator of $G$-compact support
for each $\lambda$ and $\mu$ which is  
$O(1/| \lambda^2  -\mu|)$ and of rapid decay in $\lambda$
and $\Omega (\lambda,\mu)$ rapidly decreasing as a function of $\lambda$ and $\mu$.
This implies, clearly, that 
$\Sigma (\lambda,\mu) + \Omega (\lambda,\mu)$ has small norm for $|\mu|$ large and thus that
$$1 + (\Sigma (\lambda,\mu) + \Omega (\lambda,\mu))$$
is invertible in the holomorphically closed algebra $\mathcal{L}^\infty_{G, s} (\partial Y)$ for $|\mu|$ 
large. Reasoning  as 
in the proof of \cite[Proposition 2.9]{PP2} (see at the end of page 733) 
we conclude that 
$$\alpha(\lambda) A_\partial (D^2_\partial + \lambda^2 -\mu)^{-1} )^{-1}=1+F(\lambda,\mu)$$ with $F(\lambda,\mu)$ 
going to zero for $(\lambda,\mu)$ large. 
Moreover, by applying the same argument we gave after Proposition \eqref{prop:inverse-perturbed} we see that
in fact $F(\lambda,\mu)$ has the same $(\lambda,\mu)$ behaviour as $\Sigma (\lambda,\mu) + \Omega (\lambda,\mu)$.
The operator under the integral sign of \eqref{inverse-mellin-bis} can thus be written as
$$ (D^2_\partial + \lambda^2  -\mu)^{-1} \circ (1+  F(\lambda,\mu)) \circ   \alpha(\lambda) A_\partial I(B^{{\rm sym}}(\mu), \lambda).$$
We rewrite this term as 
$$\left( B^{{\rm sym}}_\partial (\lambda,\mu) +
 G(\lambda,\mu) \right) \circ (1+  F(\lambda,\mu)) \circ  \alpha(\lambda) A_\partial I(B^{{\rm sym}}(\mu), \lambda).$$
In fact, we are interested in understanding the behaviour of this  term when
we  multiply it by $\mu$. Explicitly
\begin{align*}
& \mu B^{{\rm sym}}_\partial (\lambda,\mu)\circ \alpha(\lambda) A_\partial I(B^{{\rm sym}}(\mu), \lambda) +
\mu  G(\lambda,\mu)  \circ \alpha(\lambda) A_\partial I(B^{{\rm sym}}(\mu), \lambda) +\\
&  \mu B^{{\rm sym}}_\partial (\lambda,\mu)\circ F(\lambda,\mu) \circ  \alpha(\lambda) A_\partial I(B^{{\rm sym}}(\mu), \lambda) +
  \mu G(\lambda,\mu) \circ F(\lambda,\mu)\circ \alpha(\lambda) A_\partial I(B^{{\rm sym}}(\mu), \lambda) 
 \end{align*}
 which can be rewritten as 
 \begin{align*}
& (-\lambda^2 +\mu) B^{{\rm sym}}_\partial (\lambda,\mu)\circ \alpha(\lambda) A_\partial I(B^{{\rm sym}}(\mu), \lambda) +
  B^{{\rm sym}}_\partial (\lambda,\mu)\circ  \lambda^2 \alpha(\lambda) A_\partial I(B^{{\rm sym}}(\mu), \lambda)+\\
& \mu  G(\lambda,\mu)  \circ \alpha(\lambda) A_\partial I(B^{{\rm sym}}(\mu), \lambda) +\\
&  (-\lambda^2 + \mu ) B^{{\rm sym}}_\partial (\lambda,\mu)\circ F(\lambda,\mu) \circ  \alpha(\lambda) A_\partial I(B^{{\rm sym}}(\mu), \lambda) +
B^{{\rm sym}}_\partial (\lambda,\mu)\circ F(\lambda,\mu) \circ  \lambda^2  \alpha(\lambda) A_\partial I(B^{{\rm sym}}(\mu), \lambda)+\\
& \mu   G(\lambda,\mu) \circ F(\lambda,\mu)\circ \alpha(\lambda) A_\partial I(B^{{\rm sym}}(\mu), \lambda)
\end{align*}
Let us analyze the various terms. We consider the operator in the first summand of the first line; the operator
$ (-\lambda^2 +\mu )  B^{{\rm sym}}_\partial (\lambda,\mu)$ is uniformly bounded in $\lambda$ and $\mu$, whereas 
$\alpha(\lambda) A_\partial I(B^{{\rm sym}}(\mu), \lambda)$ is rapidly decreasing in $\lambda$ and it is $O(1/|\mu|)$
in $\mu$; both factors are considered as maps with values in $\mathcal{L}^c_G (\partial Y)$; thus overall the first summand is an operator which is rapidly decreasing in $\lambda$ and it is $O(1/|\mu|)$
in $\mu$ with values in  $\mathcal{L}^c_G (\partial Y)$.
The operator on the second summand of the first line 
is rapidly decreasing in $\lambda$ and $O(1/|\mu|^2)$ with values in $\mathcal{L}^c_G (\partial Y)$.
We pass to  the operator in the second  line;
$\mu  G(\lambda,\mu)  \circ \alpha(\lambda) A_\partial I(B^{{\rm sym}}(\mu), \lambda)$.
This  is rapidly decreasing jointly in $\lambda$ and $\mu$ with values in $\mathcal{L}^\infty_{G, s} (\partial Y)$.
The operator in the first summand of the third line is 
rapidly decreasing in $\lambda$ and it is $O(1/|\mu|)$
in $\mu$ with values in $\mathcal{L}^\infty_{G, s} (\partial Y)$; the operator on the second summand of the third line 
is rapidly decreasing in $\lambda$ and $O(1/|\mu|^2)$ with values in $\mathcal{L}^\infty_{G, s} (\partial Y)$. Finally the operator on the fourth line
is rapidly decreasing jointly in $\lambda$ and $\mu$ with values in $\mathcal{L}^\infty_{G, s} (\partial Y)$. \\
We now want to take the inverse Mellin transform of these kernels. When we perform the $\lambda$ integration involved in the inverse Mellin transform we are faced
with the following expressions:
 \begin{align*}
&\int_{\RR}s^{i\lambda} \left((-\lambda^2 +\mu) B^{{\rm sym}}_\partial (\lambda,\mu)\circ \alpha(\lambda) \right)A_\partial I(B^{{\rm sym}}(\mu), \lambda)d\lambda\\
 & \int_{\RR}s^{i\lambda} B^{{\rm sym}}_\partial (\lambda,\mu)\circ  \lambda^2 \alpha(\lambda) A_\partial I(B^{{\rm sym}}(\mu), \lambda)d\lambda\\
 &\int_{\RR}s^{i\lambda} \mu  G(\lambda,\mu)  \circ \alpha(\lambda) A_\partial I(B^{{\rm sym}}(\mu), \lambda)d\lambda\\
&   \int_{\RR}s^{i\lambda} (-\lambda^2 + \mu ) B^{{\rm sym}}_\partial (\lambda,\mu)\circ F(\lambda,\mu) \circ  \alpha(\lambda) A_\partial I(B^{{\rm sym}}(\mu), \lambda)d\lambda\\
&\int_{\RR} s^{i\lambda}B^{{\rm sym}}_\partial (\lambda,\mu)\circ F(\lambda,\mu) \circ  \lambda^2  \alpha(\lambda) A_\partial I(B^{{\rm sym}}(\mu), \lambda)d\lambda\\
& \int_{\RR}s^{i\lambda}\mu   G(\lambda,\mu) \circ F(\lambda,\mu)\circ \alpha(\lambda) A_\partial I(B^{{\rm sym}}(\mu), \lambda)d\lambda
\end{align*}
 Let us focus on the second one above, which is the less obvious.
 We want to show that when we perform the $\lambda$-integration 
 \begin{equation}\label{second-term}
 \int_{\RR}s^{i\lambda} B^{{\rm sym}}_\partial (\lambda,\mu)\circ  \lambda^2 \alpha(\lambda) A_\partial I(B^{{\rm sym}}(\mu), \lambda)d\lambda\,.\end{equation}
 We are left with a kernel that is still $O(1/|\mu|)$ in ${}^b \mathcal{L}^c_{G,s,\RR} ({\rm cyl}(\partial Y))$. We now proceed
 to show this property.\\
  In \eqref{second-term} the term $B^{{\rm sym}}_\partial (\lambda,\mu)$ is a symbolic parametrix for $(D^2_\partial +\lambda^2 -\mu)^{-1}$
 and thus it is the quantization of a $(-2)$-pseudodifferential symbol with parameter, where the parameter is 
  $\mu -\lambda^2$. The term $I(B^{{\rm sym}}(\mu), \lambda)$ is the indicial family of a symbolic parametrix 
  for $(D^2 -\mu)$ and so it is the indicial family of a $(-2)$-b-pseudodifferential operator with parameter $\mu$, obtained by quantization
  of a $(-2)$-symbol with parameter $\mu$.
  These terms are then composed with $A_\partial$, producing an element in $\mathcal{L}_{G,s} (\partial Y)$.
 The dependence of the operator $B^{{\rm sym}}_\partial (\lambda,\mu)\circ  \lambda^2 \alpha(\lambda) A_\partial $ is thus of the type 
 $$\frac{1}{|\mu-\lambda^2|} \cdot \lambda^2 \cdot \alpha(\lambda) $$
 with $\alpha (\lambda)$ rapidly decreasing in $\lambda$.\\
 We are left with $I(B^{{\rm sym}}(\mu), \lambda)$. The operator $B^{{\rm sym}}(\mu)$ is obtained by
 quantization of a symbol of order -2 with parameter $\mu$,  call it $b(r,y,\xi,\eta;\mu)$, near the boundary. We shall eventually apply these reasonings to  $(D^2+A-\mu)^{-k}$, $k$ large; for this last operator, with Schwartz kernel
 given by a $C^{j}$ section, $j=2k -n$, 
 we would then be looking at $I(B^{{\rm sym}}(\mu), \lambda)$, with 
 $B^{{\rm sym}}(\mu)$  obtained by
 quantization of a symbol of order -2k with parameter $\mu$,  call it $b(r,y,\xi,\eta;\mu)$, near the boundary.
 Its indicial family is the $C^j$-kernel locally expressed, near the boundary, as
 $$\int_{\RR^n} e^{i(y-y')\cdot\eta}b(0, y', \lambda,\eta;\mu)d\eta.\,$$ 
(See \cite{Melrose-Book} p. 115 (4.60), (4.61) and Lemma 5.4 and its proof, especially (5,17) and what comes after it.) Summarizing, we get a term that goes like 
 $$\frac{1}{|\mu-\lambda^2|}\cdot \lambda^2 \cdot \alpha(\lambda)\cdot \int_{\RR^n} e^{i(y-y')\cdot\eta}b(0, y', \lambda,\eta;\mu)d\eta$$
 with $\alpha (\lambda)$ rapidly decreasing in $\lambda$, $b$ a symbol-with-parameter $\mu$ of order  $(-2k)$ and symbolic dependance 
 of order  $(-2k)$ in $\lambda$. When we take the inverse Mellin transform
 $$\int_\RR s^{i\lambda} \frac{1}{|\mu-\lambda^2|}\cdot \lambda^2 \cdot \alpha(\lambda)\cdot \int_{\RR^n} e^{i(y-y')\cdot\eta}b(0, y', \lambda,\eta;\mu)d\eta\, d\lambda$$
 and we recall that $\mu$ runs over the straight half-lines $\ell^\pm (m,b):= \{z\in\CC\,|\, {\rm Im}z=\pm m {\rm Re}z + b\}$,
 we get something that is bounded by an expression that goes like $1/| \mu |$.\\
 
 \noindent
On the basis of the above analysis we are able to conclude that
$\mu\to \mu (I(D^2 + A -\mu)^{-k} I(A B^{{\rm sym}}(\mu))$ is $O(1/|\mu|)$,
with values in ${}^b \mathcal{L}^c_{G,s,\RR} ({\rm cyl}(\partial Y))$. This will imply that
$\mu\to 
 (D^2 + A -\mu)\sigma (I(D^2 + A -\mu)^{-k}I(A B^{{\rm sym}}(\mu)))$ is $O(1/|\mu|)$,
with values in ${}^b \mathcal{L}_{G,s} (Y)$.
This is precisely the other term we needed to estimate in \eqref{extended-}.\\
We can then conclude that on the right hand side of $(D^2+A-\mu)^k B(\mu)=1- R(\mu)$ we have  an operator $R(\mu)$,
\begin{equation*}
R(\mu)= S (\mu)- (D^2+A-\mu)^k \varphi (I(D^2 + A -\mu)^{-k} I(S(\mu))),
\end{equation*} 
that has $L^2$-norm smaller than $1/2$ for $|\mu|$ large and that has values in $ \mathcal{L}^\infty_{G, s} ( Y)$; 
since the latter  is holomorphically closed, we 
have that $(1-R(\mu))^{-1}$ is equal to $1 + L(\mu)$ and  $L(\mu)\in \mathcal{L}^\infty_{G, s} ( Y)$; moreover,   $L(\mu)$  is decreasing  to 0 in  Fr\'echet topology as $| \mu| \to +\infty$.\\
Summarizing, for $\mu\in\gamma$, $|\mu|$
large,
$$(D^2 + A -\mu)^{-k}= B(\mu) + B(\mu) \circ L(\mu) $$
with 
$B(\mu):= B^{{\rm sym}} (\mu) - \varphi  (I(D^2 + A -\mu)^{-k} I(S (\mu)))$,
$ S(\mu):=R^{{\rm sym}} (\mu)+ A  B^{{\rm sym}} (\mu)$
and $L(\mu)$ converging to $0$   as $| \mu|\to +\infty$.\\
Now we can write $\exp (-t (D+C)^2)\equiv \exp (-t (D^2+A))$ as
$$\exp (-t(D^2+A))=\frac{(k-1)!}{2\pi i} t^{1-k}\int_\gamma e^{-t\mu} ((D^2+A-\mu)^{-k}d\mu.$$
We thus need to consider
\begin{equation}
\begin{split}
\int_\gamma e^{-t\mu}  B^{{\rm sym}}(\mu) d\mu - \int_\gamma e^{-t\mu}  \varphi (I(D^2+A-\mu)^{-k} I(R^{{\rm sym}} (\mu)))d\mu\\ - \int_\gamma e^{-t\mu}  \varphi (I(D^2+A-\mu)^{-k} I( A  B^{{\rm sym}} (\mu)))d\mu
+ \int_\gamma e^{-t\mu}B^{{\rm sym}} (\mu)\circ L (\mu) d\mu-\\  \int_\gamma e^{-t\mu} 
\varphi (I(D^2+A-\mu)^{-k} I(R^{{\rm sym}} (\mu)))\circ L(\mu) d\mu
- \int_\gamma e^{-t\mu} 
\varphi (I(D^2+A-\mu)^{-k} I(A  B^{{\rm sym}} (\mu) (\mu)))\circ L(\mu) d\mu
\end{split}
\end{equation}
with $B^{{\rm sym}}(\mu)$ of order $-2k$.
Using the information we have just gathered about the operators involved in these 6 integrals and reasoning precisely as in the unperturbed case treated in \cite[Section 5.2]{PPST}  we can conclude that 
 the following proposition holds:

\begin{proposition}(Proposition \ref{prop:large-yes-boundary-perturbed})
Let $C\in {}^b\mathcal{L}^{c}_{G} (Y)$ be self-adjoint,
$C=\sigma (C_{\cyl})$, with
$C_{\cyl}$ obtained by inverse Mellin transform of $\alpha (\lambda)C_{\partial}$,
 with
$C_{\partial}\in \mathcal{L}^c_{G} (\partial Y)$ and 
$\alpha(\lambda)$ an entire function which is rapidly decreasing in ${\rm Re}\lambda$
on any horizontal strip $|{\rm Im} \lambda|<N$. 
 Let $D+C$ be $L^2$-invertible. Then
\[
 \exp (-t(D+C)^2)\rightarrow 0 \quad\text{in}\quad   {}^b \mathcal{L}^\infty_{G, s} (Y)\quad\text{as}\quad t\to +\infty.
 \]
 \end{proposition}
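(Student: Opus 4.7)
The plan is to exploit the contour integral representation
\[
\exp(-t(D^2+A)) = \frac{(k-1)!}{2\pi i}\, t^{1-k}\int_\gamma e^{-t\mu}\, (D^2+A-\mu)^{-k}\,d\mu,
\]
with $k$ chosen large enough that everything converges absolutely in the Fréchet topology of ${}^b\mathcal{L}^\infty_{G,s}(Y)$, and $\gamma$ a contour of the form $\ell^+(m,b)\cup\ell^-(m,b)$ kept at positive distance from $\operatorname{spec}_{L^2}((D+C)^2)$. The goal will then be to show that each piece of the resolvent, after being paired against $e^{-t\mu}$, produces a Bochner integral in ${}^b\mathcal{L}^\infty_{G,s}(Y)$ that decays (in fact exponentially) as $t\to+\infty$.

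First I would substitute the structural decomposition
\[
(D^2+A-\mu)^{-k} = B(\mu) + B(\mu)\circ L(\mu),
\]
established just above the statement, where $B(\mu)=B^{\rm sym}(\mu) - \varphi\bigl(I(D^2+A-\mu)^{-k} I(S(\mu))\bigr)$ with $S(\mu)=R^{\rm sym}(\mu)+AB^{\rm sym}(\mu)$, and where $L(\mu)\in\mathcal{L}^\infty_{G,s}(Y)$ converges to $0$ in Fréchet topology as $|\mu|\to\infty$. This expands the heat operator into the six contour integrals already isolated at the end of the section. The purely symbolic contribution $\int_\gamma e^{-t\mu}B^{\rm sym}(\mu)\,d\mu$ is handled exactly as in the unperturbed case treated in \cite[Section 5.2]{PPST}. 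The two integrals involving $L(\mu)$ are immediately tame, since $L(\mu)$ is residual and rapidly decreasing in $|\mu|$, so they contribute exponentially decaying terms in ${}^b\mathcal{L}^\infty_{G,s}(Y)$ by dominated convergence.

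The substantive work lies in the two integrals $\int_\gamma e^{-t\mu}\varphi\bigl(I(D^2+A-\mu)^{-k} I(R^{\rm sym}(\mu))\bigr)\,d\mu$ and $\int_\gamma e^{-t\mu}\varphi\bigl(I(D^2+A-\mu)^{-k} I(AB^{\rm sym}(\mu))\bigr)\,d\mu$. For both I would use the boundary-resolvent expansion
\[
(D_\partial^2+\lambda^2-\mu)^{-1} = B^{\rm sym}_\partial(\lambda,\mu) + G(\lambda,\mu)
\]
combined with the identity $\alpha(\lambda)A_\partial(D_\partial^2+\lambda^2-\mu)^{-1})^{-1} = 1+F(\lambda,\mu)$, where $F(\lambda,\mu)$ has the same joint $(\lambda,\mu)$-decay as $\Sigma(\lambda,\mu)+\Omega(\lambda,\mu)$. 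Plugging this into the inverse Mellin transforms defining $I(D^2+A-\mu)^{-k}\circ I(S(\mu))$ and carefully tracking the symbolic orders, I would verify, term by term in the six-term splitting already written out in the excerpt, that each integrand is $O(|\mu|^{-1})$ in ${}^b\mathcal{L}^\infty_{G,s}(Y)$ for $|\mu|$ large and bounded in Fréchet topology for $|\mu|$ small.

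The main obstacle will be the $I(AB^{\rm sym}(\mu))$ integral: here one must show that the extra factor of $\mu$ arising from the contour does not destroy the estimate. This is where the hypothesis on $C=\varphi(C_{\cyl})$ — concretely that $I(C,\lambda)=\alpha(\lambda)C_\partial$ with $\alpha$ entire and rapidly decreasing in $\operatorname{Re}\lambda$ on horizontal strips — is decisive: it furnishes the rapid $\lambda$-decay needed to absorb the extra $\lambda^2$ factor produced by $(-\lambda^2+\mu)B^{\rm sym}_\partial(\lambda,\mu) = O(1)$. Once these estimates are in hand, together with the factor $e^{-t\mu}$ along $\gamma$ (which is exponentially decaying in $t$ uniformly on $\gamma$) and the prefactor $t^{1-k}$, the dominated convergence theorem for Bochner integrals immediately gives $\exp(-t(D+C)^2)\to 0$ in ${}^b\mathcal{L}^\infty_{G,s}(Y)$ as $t\to+\infty$, completing the proof.
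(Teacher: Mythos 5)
Your proposal is correct and follows essentially the same route as the paper's own argument: the contour representation of $\exp(-t(D^2+A))$ via $(D^2+A-\mu)^{-k}$, the decomposition $(D^2+A-\mu)^{-k}=B(\mu)+B(\mu)\circ L(\mu)$, the six-term splitting of the resulting integral, the boundary-resolvent expansion $(D_\partial^2+\lambda^2-\mu)^{-1}=B^{\rm sym}_\partial(\lambda,\mu)+G(\lambda,\mu)$, and the crucial use of the rapid $\lambda$-decay of $\alpha(\lambda)$ to absorb the extra $\lambda^2$ and $\mu$ factors in the $I(AB^{\rm sym}(\mu))$ term. The only minor inaccuracy is the claim that $L(\mu)$ is \emph{rapidly} decreasing in $|\mu|$ — the paper only establishes that $L(\mu)\to 0$ in the Fréchet topology as $|\mu|\to\infty$ — but this does not affect the conclusion since the factor $e^{-t\mu}$ on $\gamma$ already supplies all the decay one needs for the Bochner integral to vanish as $t\to+\infty$.
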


\noindent
Proceeding as in the closed case, considering thus a function $f$ as in Lemma
\ref{lem:schwartzfunction}, we can establish more generally the following. 

\begin{proposition}(Proposition \ref{prop:large-yes-boundary-perturbed-bis}) 
Let $C\in {}^b\mathcal{L}^{c}_{G} (Y)$ be self-adjoint
and 
of the type  explained in the statement of the previous 
Proposition. If  $D+C$ is $L^2$-invertible then
for the Connes-Moscovici projector it holds that
 \[
 V(t(D+C))-e_1\to 0\quad\mbox{in}~M_{2\times 2}\Big({}^b\mathcal{L}^\infty_{G, s} (Y)\Big)\quad \mbox{as} ~t\to +\infty.
 \]
\end{proposition}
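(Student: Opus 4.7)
The plan is to reduce the statement to Proposition \ref{prop:large-yes-boundary-perturbed} (applied to a suitable class of functional calculi) by a direct entry-by-entry analysis. Setting $A=D+C$ and using the Borel functional calculus identity $g(A^-A^+)A^- = A^- g(A^+A^-)$, the four entries of $V(tA)-e_1$ can all be written in one of the two shapes
\[
f_1(t^2 A^{\mp} A^{\pm}) \quad\text{or}\quad tA^{\pm} \cdot f_j(t^2 A^{\mp}A^{\pm}),
\]
where $f_1(x)=e^{-x}$, $f_2(x)=e^{-x/2}(1-e^{-x})/x$ and $f_3(x)=e^{-x/2}$ all satisfy Assumption \ref{function-as-CM}. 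So what is needed is convergence to $0$ in ${}^b\mathcal{L}^\infty_{G,s}(Y)$ of $f_i(t^2(D+C)^2)$ together with a rate strong enough to absorb the extra factor of $tA^{\pm}$ in the off-diagonal entries.

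The first key step is to upgrade Proposition \ref{prop:large-yes-boundary-perturbed} to: for every $f$ satisfying Assumption \ref{function-as-CM}, every $N\in\mathbb{N}$ and every continuous semi-norm $p$ on ${}^b\mathcal{L}^\infty_{G,s}(Y)$, $p(f(t^2(D+C)^2)) = O(t^{-N})$ as $t\to\infty$. The proof is a direct rerun of the argument already carried out in Proposition \ref{prop:large-yes-boundary-perturbed}: represent $f(t^2(D+C)^2)$ as a contour integral $\tfrac{1}{2\pi i}\int_\gamma f(t^2\mu)((D+C)^2-\mu)^{-1}d\mu$, use Lemma \ref{lem:schwartzfunction} to integrate by parts $k-1$ times, pulling out a factor $t^{-2(k-1)}$ and replacing the resolvent by $((D+C)^2-\mu)^{-k}$, and then plug in the structural decomposition $(D^2+A'-\mu)^{-k}=B(\mu)+B(\mu)L(\mu)$ of the perturbed resolvent established in that proof; the two pieces are then controlled exactly as in Section \ref{sect:proofs-heat}, so the factor $t^{-2(k-1)}$ survives and, by choosing $k$ large, yields the required polynomial decay rate.

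With this strengthening, the two diagonal entries are handled immediately. For the off-diagonal entries $tA^{\pm}f_j(t^2 A^{\mp}A^{\pm})$, the strategy is to observe that left multiplication by the first-order $b$-differential operator $A^{\pm}$ is a continuous map ${}^b\mathcal{L}^\infty_{G,s}(Y)\to {}^b\mathcal{L}^\infty_{G,s}(Y)$ (this follows from the slice description \eqref{algebra-slice-bis} adapted to the $b$-calculus, since differentiating and multiplying preserve the Schwartz seminorms involved, together with the fact that the indicial family of $C$ is rapidly decreasing in $\mathrm{Re}\lambda$ on horizontal strips). Combining this continuity with the $O(t^{-N})$ decay of $f_j(t^2 A^{\mp}A^{\pm})$ lets the $t$-prefactor be absorbed, so each off-diagonal entry converges to $0$ in ${}^b\mathcal{L}^\infty_{G,s}(Y)$.

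The main obstacle is the quantitative bookkeeping in the upgraded Proposition \ref{prop:large-yes-boundary-perturbed}: one must check that the substitution $f(\mu)\rightsquigarrow f(t^2\mu)$ interacts correctly with the contour $\gamma$ and that the integration-by-parts factor $t^{2(1-k)}$ is not defeated by $t$-growth hidden in the parameter-dependent resolvent analysis along $\gamma$. Once this $t$-tracking is verified, everything else is a matter of applying continuity of the algebra operations; the only secondary point to watch is that the hypothesis on $C$ in Proposition \ref{prop:large-yes-boundary-perturbed} carries over unchanged here, since the operator $A^{\pm}$ one has to pull through retains the same indicial structure as $D+C$.
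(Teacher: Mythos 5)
Your proof is correct, and it supplies precisely the details the paper leaves implicit: the paper's entire argument for this proposition is the sentence ``proceeding as in the closed case, considering thus a function $f$ as in Lemma \ref{lem:schwartzfunction}", and in the closed case (Proposition \ref{prop:large-no-boundary-unperturbed}) the off--diagonal entries of $V(t(D+C))$, which carry the extra prefactor $tA^{\pm}$, are likewise not discussed explicitly. Your entry-by-entry decomposition, with $f_1(x)=e^{-x}$, $f_2(x)=e^{-x/2}(1-e^{-x})/x$, $f_3(x)=e^{-x/2}$, identifies exactly the functional-calculus ingredients, and your two supplementary steps --- the polynomial decay upgrade $p\big(f(t^2 A^{\mp}A^{\pm})\big)=O(t^{-N})$ obtained by tracking powers of $t$ through the integration-by-parts trick (which is indeed already latent in the $t^{1-k}$ prefactor appearing in the proof of Proposition \ref{prop:large-yes-boundary-perturbed}), and continuity of left multiplication by the first-order operator $A^{\pm}$ on ${}^b\mathcal{L}^\infty_{G,s}(Y)$ --- close the gap. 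For the latter you should make explicit that $A^{\pm}=D^{\pm}+C^{\pm}$ decomposes as a first-order $b$-differential operator plus an element of ${}^b\mathcal{L}^c_{G}(Y)$, and argue via the split $D=D_{\rm split}+R$ used at the beginning of Section \ref{section:heat} together with the module property ${}^b\Psi^0_{G,c}(Y)\times{}^b\mathcal{L}^\infty_{G,s}(Y)\to{}^b\mathcal{L}^\infty_{G,s}(Y)$ cited in Proposition \ref{prop:structure-of-inverse-perturbed}; the extension from order $0$ to order $1$ in the left factor is routine but worth a sentence. A mildly cleaner variant, which sidesteps the explicit $O(t^{-N})$ bookkeeping, is to write
\[
tA^{\pm}f_j(t^2 A^{\mp}A^{\pm})
= \tfrac{1}{t}\, A^{\pm}(A^{\mp}A^{\pm})^{-1}\, g_j(t^2 A^{\mp}A^{\pm}),
\qquad g_j(u):=u\,f_j(u),
\]
where $g_j$ again satisfies Assumption \ref{function-as-CM}, $A^{\pm}(A^{\mp}A^{\pm})^{-1}$ is a fixed operator in ${}^b\Psi^{-1}_{G,c}(Y)+{}^b\mathcal{L}^\infty_{G,s}(Y)$ by the structure of the inverse (Proposition \ref{prop:structure-of-inverse-perturbed}), hence acts continuously on ${}^b\mathcal{L}^\infty_{G,s}(Y)$, and the convergence $g_j(t^2 A^{\mp}A^{\pm})\to 0$ plus the free factor $1/t$ finish the job. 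Either way your argument is sound and is a faithful elaboration of the route the paper only sketches.
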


\begin{bibdiv}
\begin{biblist}

\bib{Abels}{article}{
   author={Abels, Herbert},
   title={Parallelizability of proper actions, global $K$-slices and maximal
   compact subgroups},
   journal={Math. Ann.},
   volume={212},
   date={1974/75},
   pages={1--19},
   issn={0025-5831},
   review={\MR{375264}},
   doi={10.1007/BF01343976},
}

%
%
%
%
		
	\bib{BGV}{book}{
   author={Berline, Nicole},
   author={Getzler, Ezra},
   author={Vergne, Mich\`ele},
   title={Heat kernels and Dirac operators},
   series={Grundlehren Text Editions},
   note={Corrected reprint of the 1992 original},
   publisher={Springer-Verlag, Berlin},
   date={2004},
   pages={x+363},
   isbn={3-540-20062-2},
   review={\MR{2273508}},
}
%
%
%

\bib{ChangWeinberger}{article}{
   author={Chang, Stanley},
   author={Weinberger, Shmuel},
   title={On invariants of Hirzebruch and Cheeger-Gromov},
   journal={Geom. Topol.},
   volume={7},
   date={2003},
   pages={311--319},
   issn={1465-3060},
   review={\MR{1988288}},
   doi={10.2140/gt.2003.7.311},
}

%
%
%
%
%
%
%
\bib{Connes94}{book}{
   author={Connes, Alain},
   title={Noncommutative geometry},
   publisher={Academic Press, Inc., San Diego, CA},
   date={1994},
   pages={xiv+661},
   isbn={0-12-185860-X},
   review={\MR{1303779}},
}
	
	\bib{dodziuk}{article}{
   author={Dodziuk, Jozef},
   title={de Rham-Hodge theory for $L\sp{2}$-cohomology of infinite
   coverings},
   journal={Topology},
   volume={16},
   date={1977},
   number={2},
   pages={157--165},
   issn={0040-9383},
   review={\MR{0445560}},
   doi={10.1016/0040-9383(77)90013-1},
}

\bib{Do-S}{book}{
   author={Dovermann, Karl Heinz},
   author={Schultz, Reinhard},
   title={Equivariant surgery theories and their periodicity properties},
   series={Lecture Notes in Mathematics},
   volume={1443},
   publisher={Springer-Verlag, Berlin},
   date={1990},
   pages={vi+227},
   isbn={3-540-53042-8},
   review={\MR{1077825}},
   doi={10.1007/BFb0092354},
}		


\bib{Fukumoto}{article}{
   author={Fukumoto, Yoshiyasu},
   title={$G$-homotopy invariance of the analytic signature of proper
   co-compact $G$-manifolds and equivariant Novikov conjecture},
   journal={J. Noncommut. Geom.},
   volume={15},
   date={2021},
   number={3},
   pages={761--795},
   issn={1661-6952},
   review={\MR{4345201}},
   doi={10.4171/jncg/420},
}	
		
	\bib{gilkey-book}{article}{
    AUTHOR = {Gilkey, Peter B.},
     TITLE = {Invariance theory, the heat equation, and the
              {A}tiyah-{S}inger index theorem},
    SERIES = {Studies in Advanced Mathematics},
   EDITION = {Second},
 PUBLISHER = {CRC Press, Boca Raton, FL},
      YEAR = {1995},
     PAGES = {x+516},
      ISBN = {0-8493-7874-4},
   MRCLASS = {58Gxx (58G10 58G11)},
  MRNUMBER = {1396308},
MRREVIEWER = {Matthias Lesch},
}

		\bib{GMPi}{article}{
    AUTHOR = {Gorokhovsky, Alexander} 
    AUTHOR ={Moriyoshi, Hitoshi}
    AUTHOR ={Piazza, Paolo},
    title={A note on the higher Atiyah-Patodi-Singer index theorem on Galois
   coverings},
   journal={J. Noncommut. Geom.},
   volume={10},
   date={2016},
   number={1},
   pages={265--306},
   issn={1661-6952},
   review={\MR{3500822}},
   doi={10.4171/JNCG/234},
}

\bib{gromov-shubin}{article}{
   author={Gromov, M.},
   author={Shubin, M. A.},
   title={von Neumann spectra near zero},
   journal={Geom. Funct. Anal.},
   volume={1},
   date={1991},
   number={4},
   pages={375--404},
   issn={1016-443X},
   review={\MR{1132295}},
   doi={10.1007/BF01895640},
}
\bib{GuoHochsMathai}{article}{
   author={Guo, Hao},
   author={Hochs, Peter},
   author={Mathai, Varghese},
   title={Coarse geometry and Callias quantisation},
   journal={Trans. Amer. Math. Soc.},
   volume={374},
   date={2021},
   number={4},
   pages={2479--2520},
   issn={0002-9947},
   review={\MR{4223023}},
   doi={10.1090/tran/8202},
}

%
%
%

\bib{HigsonPedersenRoe}{article}{
   author={Higson, Nigel},
   author={Pedersen, Erik Kj\ae r},
   author={Roe, John},
   title={$C^\ast$-algebras and controlled topology},
   journal={$K$-Theory},
   volume={11},
   date={1997},
   number={3},
   pages={209--239},
   issn={0920-3036},
   review={\MR{1451755}},
   doi={10.1023/A:1007705726771},
}

\bib{HigsonRoe3}{article}{
  author={Higson, Nigel},
  author={Roe, John},
  title={Mapping surgery to analysis. III. Exact sequences},
  journal={$K$-Theory},
  volume={33},
  date={2005},
  number={4},
  pages={325--346},
  issn={0920-3036},
  review={\MR {2220524}},
  doi={10.1007/s10977-005-1554-7},
}

\bib{HS}{article}{
   author={Hilsum, Michel},
   author={Skandalis, Georges},
   title={Invariance par homotopie de la signature \`a{} coefficients dans
   un fibr\'e{} presque plat},
   language={French},
   journal={J. Reine Angew. Math.},
   volume={423},
   date={1992},
   pages={73--99},
   issn={0075-4102},
   review={\MR{1142484}},
   doi={10.1515/crll.1992.423.73},
}

\bib{HWW1}{article}{
   author={Hochs, Peter},
   author={Wang, Bai-Ling},
   author={Wang, Hang},
   title={An equivariant Atiyah-Patodi-Singer index theorem for proper
   actions I: The index formula},
   journal={Int. Math. Res. Not. IMRN},
   date={2023},
   number={4},
   pages={3138--3193},
   issn={1073-7928},
   review={\MR{4565636}},
   doi={10.1093/imrn/rnab324},
}


\bib{HWW2}{article}{
   author={Hochs, Peter},
   author={Wang, Bai-Ling},
   author={Wang, Hang},
   title={An equivariant Atiyah-Patodi-Singer index theorem for proper
   actions II: the $K$-theoretic index},
   journal={Math. Z.},
   volume={301},
   date={2022},
   number={2},
   pages={1333--1367},
   issn={0025-5874},
   review={\MR{4418323}},
   doi={10.1007/s00209-021-02942-0},
}
\bib{Hochs-Wang-KT}{article}{
    AUTHOR = {Hochs, Peter}
    AUTHOR = {Wang, Hang},
     TITLE = {Orbital integrals and {$K$}-theory classes},
 journal={Ann. K-Theory},
   volume={4},
   date={2019},
   number={2},
   pages={185--209},
   issn={2379-1683},
   review={\MR{3990784}},
   doi={10.2140/akt.2019.4.185},
}
		
%
%
	\bib{Lafforgue}{article}{,
    AUTHOR = {Lafforgue, Vincent},
     TITLE = {Banach {$KK$}-theory and the {B}aum-{C}onnes conjecture},
conference={
      title={Proceedings of the International Congress of Mathematicians,
      Vol. II},
      address={Beijing},
      date={2002},
   },
   book={
      publisher={Higher Ed. Press, Beijing},
   },
   date={2002},
   pages={795--812},
   review={\MR{1957086}},
}

\bib{LaMi}{book}{
   author={Lawson, H. Blaine, Jr.},
   author={Michelsohn, Marie-Louise},
   title={Spin geometry},
   series={Princeton Mathematical Series},
   volume={38},
   publisher={Princeton University Press, Princeton, NJ},
   date={1989},
   pages={xii+427},
   isbn={0-691-08542-0},
   review={\MR{1031992}},
}

\bib{LLP}{article}{
   author={Leichtnam, Eric},
   author={Lott, John},
   author={Piazza, Paolo},
   title={On the homotopy invariance of higher signatures for manifolds with
   boundary},
   journal={J. Differential Geom.},
   volume={54},
   date={2000},
   number={3},
   pages={561--633},
   issn={0022-040X},
   review={\MR{1823315}},
}

\bib{LP-AGAG}{article}{
    author={Leichtnam, Eric},
   author={Piazza, Paolo},     TITLE = {A higher {A}tiyah-{P}atodi-{S}inger index theorem for the
              signature operator on {G}alois coverings},
   JOURNAL = {Ann. Global Anal. Geom.},
  FJOURNAL = {Annals of Global Analysis and Geometry},
    VOLUME = {18},
      YEAR = {2000},
    NUMBER = {2},
     PAGES = {171--189},
      ISSN = {0232-704X,1572-9060},
   MRCLASS = {58J22 (19K56 58J28)},
  MRNUMBER = {1744589},
MRREVIEWER = {John\ G.\ Miller},
       DOI = {10.1023/A:1006649505610},
       URL = {https://doi.org/10.1023/A:1006649505610},
}
%
%
%
%
%
%
%
%

\bib{loya}{article}{
    AUTHOR = {Loya, Paul},
     TITLE = {Dirac operators, boundary value problems, and the
              {$b$}-calculus},
 BOOKTITLE = {Spectral geometry of manifolds with boundary and decomposition
              of manifolds},
    SERIES = {Contemp. Math.},
    VOLUME = {366},
     PAGES = {241--280},
 PUBLISHER = {Amer. Math. Soc., Providence, RI},
      YEAR = {2005},
      ISBN = {0-8218-3536-X},
   MRCLASS = {58J28 (58J32)},
  MRNUMBER = {2114491},
MRREVIEWER = {Daniel\ Grieser},
       DOI = {10.1090/conm/366/06737},
       URL = {https://doi.org/10.1090/conm/366/06737},
}

\bib{Melrose-Book}{book}{
   author={Melrose, Richard B.},
   title={The Atiyah-Patodi-Singer index theorem},
   series={Research Notes in Mathematics},
   volume={4},
   publisher={A K Peters, Ltd., Wellesley, MA},
   date={1993},
   pages={xiv+377},
   isbn={1-56881-002-4},
   review={\MR{1348401}},
   doi={10.1016/0377-0257(93)80040-i},
}

\bib{Mel-P1}{article}{
   author={Melrose, Richard B.},
   author={Piazza, Paolo},
   title={Families of Dirac operators, boundaries and the b-calculus},
   journal={J. Differential Geom.},
   volume={46},
   date={1997},
   number={1},
   pages={99--180},
    review={\MR{1472895}},
}
%
%

%
%
%
	
		\bib{moriyoshi-piazza}{article}{
   author={Moriyoshi, Hitoshi},
   author={Piazza, Paolo},
   title={Eta cocycles, relative pairings and the Godbillon-Vey index
   theorem},
   journal={Geom. Funct. Anal.},
   volume={22},
   date={2012},
   number={6},
   pages={1708--1813},
   issn={1016-443X},
   review={\MR{3000501}},
   doi={10.1007/s00039-012-0197-0},
   }

\bib{PPT}{article}{
   author={Pflaum, Markus J.},
   author={Posthuma, Hessel},
   author={Tang, Xiang},
   title={The transverse index theorem for proper cocompact actions of Lie
   groupoids},
   journal={J. Differential Geom.},
   volume={99},
   date={2015},
   number={3},
   pages={443--472},
   issn={0022-040X},
   review={\MR{3316973}},
}

\bib{PP1}{article}{
   author={Piazza, Paolo},
   author={Posthuma, Hessel B.},
   title={Higher genera for proper actions of Lie groups},
   journal={Ann. K-Theory},
   volume={4},
   date={2019},
   number={3},
   pages={473--504},
   issn={2379-1683},
   review={\MR{4043466}},
   doi={10.2140/akt.2019.4.473},
}

	\bib{PP2}{article}{
    AUTHOR = {Piazza, Paolo}
    AUTHOR = {Posthuma, Hessel B.},
     TITLE = {Higher genera for proper actions of {L}ie groups, {II}: {T}he
              case of manifolds with boundary},
   JOURNAL = {Ann. K-Theory},
  FJOURNAL = {Annals of K-Theory},
    VOLUME = {6},
      YEAR = {2021},
    NUMBER = {4},
     PAGES = {713--782},
      ISSN = {2379-1683,2379-1691},
   MRCLASS = {58J20 (19K56 58J22 58J42)},
  MRNUMBER = {4382801},
       DOI = {10.2140/akt.2021.6.713},
       URL = {https://doi.org/10.2140/akt.2021.6.713},
}

\bib{PPST}{article}{
   author={Piazza, Paolo},
   author={Posthuma, Hessel},
   author={Song, Yanli},
   author={Tang, Xiang},
   title={Higher orbital integrals, rho numbers and index theory},
   journal={Math. Ann.},
   volume={391},
   date={2025},
   number={3},
   pages={3687--3763},
   issn={0025-5831},
   review={\MR{4865228}},
   doi={10.1007/s00208-024-03008-2},
}				
		\bib{PiazzaSchick_BCrho}{article}{
			author={Piazza, Paolo},
			author={Schick, Thomas},
			title={Bordism, rho-invariants and the Baum-Connes conjecture},
			journal={J. Noncommut. Geom.},
			volume={1},
			date={2007},
			number={1},
			pages={27--111},
			issn={1661-6952},
			review={\MR{2294190}},
			doi={10.4171/JNCG/2},
		}

\bib{PiazzaSchick_sig}{article}{
  author={Piazza, Paolo},
  author={Schick, Thomas},
  title={The surgery exact sequence, K-theory and the signature operator},
  journal={Ann. K-Theory},
  volume={1},
  date={2016},
  number={2},
  pages={109--154},
  issn={2379-1683},
  review={\MR {3514938}},
  doi={10.2140/akt.2016.1.109},
}

%

%
%

\bib{Roe02}{article}{
   author={Roe, John},
   title={Comparing analytic assembly maps},
   journal={Q. J. Math.},
   volume={53},
   date={2002},
   number={2},
   pages={241--248},
   issn={0033-5606},
   review={\MR{1909514}},
   doi={10.1093/qjmath/53.2.241},
}

%

\bib{Shubin-Book}{book}{
   author={Shubin, Mikhail A.},
   title={Pseudodifferential operators and spectral theory},
   edition={2},
   note={Translated from the 1978 Russian original by Stig I. Andersson},
   publisher={Springer-Verlag, Berlin},
   date={2001},
   pages={xii+288},
   isbn={3-540-41195-X},
   review={\MR{1852334}},
   doi={10.1007/978-3-642-56579-3},
}
%
%

\bib{spessato}{article}{
   author={Spessato, Stefano},
   title={Uniform homotopy invariance of Roe index of the signature
   operator},
   journal={Geom. Dedicata},
   volume={217},
   date={2023},
   number={2},
   pages={Paper No. 20, 38},
   issn={0046-5755},
   review={\MR{4523875}},
   doi={10.1007/s10711-022-00753-z},
}

\bib{Thomas}{article}{
   author={Thomas, G. Erik F.},
   title={Integration of functions with values in locally convex Suslin
   spaces},
   journal={Trans. Amer. Math. Soc.},
   volume={212},
   date={1975},
   pages={61--81},
   issn={0002-9947},
   review={\MR{0385067}},
   doi={10.2307/1998613},
}

\bib{Wahl}{article}{
   author={Wahl, Charlotte},
   title={Higher $\rho$-invariants and the surgery structure set},
   journal={J. Topol.},
   volume={6},
   date={2013},
   number={1},
   pages={154--192},
   issn={1753-8416},
   review={\MR{3029424}},
   doi={10.1112/jtopol/jts028},
}

\bib{Wiciak}{article}{
   author={Wiciak, Margareta},
   title={Product integral in a Fr\'{e}chet algebra},
   journal={Univ. Iagel. Acta Math.},
   number={39},
   date={2001},
   pages={281--298},
   issn={0083-4386},
   review={\MR{1886950}},
}
\bib{Wahl-product}{article}{
			author={Wahl, Charlotte},
			title={Product formula for Atiyah-Patodi-Singer index classes and higher signatures},
			journal={J. K-Theory},
   volume={6},
   date={2010},
   number={2},
   pages={285--337},
   issn={1865-2433},
   review={\MR{2735088}},
   doi={10.1017/is010002020jkt106},
			}
			
			\bib{Wahl-Ramanujan}{article}{
   author={Wahl, Charlotte},
   title={On the noncommutative spectral flow},
   journal={J. Ramanujan Math. Soc.},
   volume={22},
   date={2007},
   number={2},
   pages={135--187},
   issn={0970-1249},
   review={\MR{2333742}},
}

\bib{WXY}{article}{
  author={Weinberger, Shmuel},
  author={Xie, Zhizhang},
  author={Yu, Guoliang},
  title={Additivity of higher Rho invariants and nonrigidity of topological manifolds},
  journal={Comm. Pure Appl. Math.},
  volume={74},
  date={2021},
  number={1},
  pages={3--113},
  issn={0010-3640},
  review={\MR {4178180}},
}

%
%
%
%
	
\bib{zenobi-JTA}{article}{
   author={Zenobi, Vito Felice},
   title={Mapping the surgery exact sequence for topological manifolds to
   analysis},
   journal={J. Topol. Anal.},
   volume={9},
   date={2017},
   number={2},
   pages={329--361},
   issn={1793-5253},
   review={\MR{3622237}},
   doi={10.1142/S179352531750011X},
}

\bib{Zhangwp}{article}{
     author={Zhang, Wei Ping},
   title={A note on equivariant eta invariants},
   journal={Proc. Amer. Math. Soc.},
   volume={108},
   date={1990},
   number={4},
   pages={1121--1129},
   issn={0002-9939},
   review={\MR{1004426}},
   doi={10.2307/2047979},
   }

	\end{biblist}
\end{bibdiv}

\end{document}